\newtheorem{thm}{Theorem}[section]
\newtheorem{prop}[thm]{Proposition}
\newtheorem{lem}[thm]{Lemma}
\theoremstyle{definition}
\newtheorem{defn}[thm]{Definition}
\theoremstyle{remark}
\newtheorem{rem}[thm]{Remark}
\newtheorem{ex}[thm]{Example}
\newcommand{\h}[1]{%
 H^*({#1}; {\mathbb K})}
\newcommand{\K}{{\mathbb K}}
\newcommand{\Q}{{\mathbb Q}}
\newcommand{\T}{{\mathcal T}}
\newcommand{\e}{\varepsilon}
\newcommand{\D}{\text{D}}
\newcommand{\DD}{\text{\em D}}
\newcommand{\mapright}[1]{%
 \smash{\mathop{%
  \hbox to 1cm{\rightarrowfill}}\limits_{#1}}}
\newcommand{\maprightd}[2]{%
 \smash{\mathop{%
  \hbox to 1.2cm{\rightarrowfill}}\limits^{#1}\limits_{#2}}}
\newcommand{\mapleft}[1]{%
 \smash{\mathop{%
  \hbox to 1cm{\leftarrowfill}}\limits_{#1}}}
\newcommand{\mapleftu}[1]{%
 \smash{\mathop{%
  \hbox to 0.8cm{\leftarrowfill}}\limits^{#1}}}
\newcommand{\maprightu}[1]{%
 \smash{\mathop{%
  \hbox to 1cm{\rightarrowfill}}\limits^{#1}}}
\newcommand{\maprightud}[2]{%
 \smash{\mathop{%
  \hbox to 1cm{\rightarrowfill}}\limits^{#1}_{#2}}}
\newcommand{\mapleftud}[2]{%
 \smash{\mathop{%
  \hbox to 1cm{\leftarrowfill}}\limits^{#1}_{#2}}}
\newcounter{eqn}[section]
\def\theeqn{\textnormal{(\thesection.\arabic{eqn})}}
\def\eqnlabel#1{%
  \refstepcounter{eqn}%
  \label{#1}%
  \leqno{\theeqn}}
\begin{document}

\title[On the levels of maps]{On the levels of maps and topological realization
of objects in a triangulated category}

\footnote[0]{{\it 2000 Mathematics Subject Classification}: 
16E45, 18E30, 55R20, 13D07.
\\ 
{\it Key words and phrases.} Level, Auslander-Reiten quiver, 
triangulated category, formal space, semifree resolution. 


Department of Mathematical Sciences, 
Faculty of Science,  
Shinshu University,   
Matsumoto, Nagano 390-8621, Japan   
e-mail:{\tt kuri@math.shinshu-u.ac.jp}
}

\author{Katsuhiko KURIBAYASHI}
\date{}
   
\maketitle

\begin{abstract}
The level of a module over a differential graded algebra measures 
the number of steps required to build the module 
in an appropriate triangulated category.   Based on this notion, 
we introduce a new homotopy invariant of spaces over a fixed space, called the level of a map. 
Moreover we provide a method to compute the invariant for spaces over a $\K$-formal space.  
This enables us to determine  the level of the total space of a bundle over 
the $4$-dimensional sphere with the aid of Auslander-Reiten theory for spaces due to J{\o}rgensen.   
We also discuss the problem of realizing 
an indecomposable object in the derived category of the sphere 
by the singular cochain complex of a space. 
The Hopf invariant provides a criterion for the realization. 
\end{abstract}

\section{Introduction}
Categorical representation theory yields suitable tools for studying 
certain problems 
in finite group theory, algebraic geometry and algebraic topology. 
For example, the Auslander-Reiten quiver of a triangulated category is 
an interesting combinatorial invariant; 
see \cite{H}, \cite{H2}, \cite{J}, \cite{J2} and \cite{S}. 
The singular (co)chain complex functor is a necessary ingredient in 
developing algebraic model theory for topological spaces; see \cite{A}, 
\cite{B-L}, \cite{FHT2}, \cite{H-L} and \cite{M}.  
We will here advertise the idea that this functor, combined with tools
from categorical representation theory of the kind just mentioned, is likely
to provide new insights into the relationship between algebra and
topology. To this end, we introduce and study a homotopy 
invariant that we call the {\it level}  of a map.

The notion of levels of objects in a triangulated category 
was originally introduced 
by Avramov, Buchweitz, Iyengar and Miller in \cite{ABIM}. 
Roughly speaking, the level of an object $M$ in a triangulated category
 $\T$ counts the number of steps required to build $M$ out of a fixed 
 object via triangles in $\T$.  

Let $X$ be a space and ${\mathcal TOP}_X$ the category of spaces over $X$. 
The singular cochain  complex functor $C^*( \ ; \K)$ 
with coefficients in a field 
$\K$ gives rise to a contravariant functor 
from ${\mathcal TOP}_X$ to the derived
category $\D(C^*(X; \K))$ of DG (that is, differential graded) modules 
over the DG algebra $C^*(X; \K)$. Observe that  $\D(C^*(X; \K))$  is a 
triangulated category with shift functor $\Sigma$; $(\Sigma M)^n = M^{n+1}$.   
We then define the level of a space $Y$ over $X$ to be the level of 
the DG $C^*(X; \K)$-module $C^*(Y; \K)$;   
see Section 2 for the exact definition.  

In the rest of this section, we survey our main results.  

After showing that the level of a space is a weak homotopy invariant on ${\mathcal TOP}_X$, 
we give a reduction theorem (Theorem \ref{thm:main}) for
computing the level of a pullback of $\K$-formal spaces. 
An explicit calculation using this theorem tells us that a `nice' space 
such as the total space $E$ of a bundle over the sphere $S^d$
is of low level; 
see Propositions \ref{prop:bundle} and \ref{prop:variation}. 
This means that the object $C^*(E; \K)$ in $\D(C^*(S^d; \K))$ is built out of indecomposable objects of 
low level in the full subcategory
 of compact objects $\D^c(C^*(S^d; \K))$.  
These indecomposable objects, which we call {\it molecules} of $C^*(E; \K)$, 
are visualized with black vertices in the Auslander-Reiten quiver of 
$\D^c(C^*(S^d; \K))$ as drawn below. 
\vspace{-0.2cm}
$$
\objectmargin={0.5pt}
\xymatrix@C20pt@R18pt{
&  \vdots & {}\ar@{}[rd]^(1.2){Z_3} & \vdots & & \vdots & & \vdots \\
\cdots & \circ \ar[rd]^(1.2){\mbox{\tiny$Z_2$}} &   & \circ \ar[rd]^(1.3){} & & \circ
                       \ar[rd]
        & & \circ  \ar[rd]^(1.3){{\mbox{\tiny{$\Sigma^{-3(d-1)}Z_2$}}}} & & \cdots \\
&   &\circ\ar[rd]^(1.2){\mbox{\tiny$Z_1$}}\ar[ru]  &  & \circ \ar[rd]^(1.2)
{{\mbox{\tiny{$\Sigma^{-(d-1)}Z_1$}}}} \ar[ru] & 
               & \circ\ar[rd]^(1.2){{\mbox{\tiny{$\Sigma^{-2(d-1)}Z_1$}}}} \ar[ru] & & \circ&
   & & \cdots \\
& \circ\ar[rd]\ar[ru]    &  & \circ \ar[rd]\ar[ru] &  &
                       \bullet\ar[rd] \ar[ru]&  & \bullet
 \ar[rd]^(1.4){{\mbox{\tiny{$\Sigma^{-2(d-1)}Z_0$}}}}\ar[ru] & & \\ 
& \!\!\!\! \cdots  &\circ \ar[ru] &   & \bullet \ar[ur]_(0.08){\mbox{\tiny$Z_0$}} & 
& \bullet\ar[ru]_(0.08){{\mbox{\tiny{$\Sigma^{-(d-1)}Z_0$}}}}
 & & \bullet  & & & \cdots 
}
$$
Here only the component of the quiver containing $Z_0=C^*(S^d; \K)$ is illustrated.  
Thus one has a new algebraic aspect of a topological object.  
For more details of the Auslander-Reiten quiver of a space, we refer the reader to Theorem \ref{thm:J}, 
which is a remarkable result due to J{\o}rgensen.

The level of a map $Y \to B$ provides a lower bound on the number of spherical 
fibrations required to construct $Y$ from $B$; 
see Proposition \ref{prop:pile} and Theorem \ref{thm:ex-level}. 
A topological description of the level is here given.  
Moreover, Theorem \ref{thm:ex-level} and Proposition \ref{prop:Z} imply that there exists at least 
one molecule in each row of the the Auslander-Reiten quiver of $\D^c(C^*(S^d; \Q))$  
which is a summand of $C^*(X; \Q)$ for some space $X$ over $S^d$.   

Intriguing properties of the notion level are investigated in followups to this article 
\cite{K3} \cite{K4}. In particular, we show in \cite{K3} that the dual, {\it chain-type level} 
of a map $f : X \to Y$ provides an upper bound on the Lusternik-Schnirelmann category of $X$, at least over 
$\Q$. In \cite{K4} we explain that cochain-type  and chain-type levels are related by a sort of Eckmann-Hilton duality. 

We deal with the problem of realizing a vertex (molecule) in 
an Auslander-Reiten quiver by 
the singular cochain complex of a space.
It turns out that almost all molecules which appear  in the quiver 
over the sphere are not realized by finite CW complexes. 
In fact Theorem \ref{thm:realization} states that, in the
Auslander-Reiten quiver mentioned above, only the arrow 
$$
\xymatrix@C20pt@R5pt{
\mbox{\small{$Z_0$}}\ \  \bullet \ar[r] & \bullet \ \
\mbox{\small{$\Sigma^{-(d-1)}Z_1$}}}
$$
is realizable.  
Proposition \ref{prop:spheres} asserts that a map 
$\phi : S^d \to S^{2d-1}$ realizes 
the arrow if and only if the Hopf invariant of $\phi$ is non-trivial. 
This gives a new
topological perspective on the Auslander-Reiten quiver. 

Statements of all our results can be found in Section 2, 
while the proofs are in sections 3 through 7. 

\section{Results}

We fix some terminology. Throughout this article differential graded objects are written in 
the cohomological notation; that is, the differential increases degree by $1$.  
We say that a graded vector space $M$ is {\it locally finite} if $M^i$ is
of finite dimension for any $i$.  
Moreover $M$ is said to be {\it non-negative} if $M^i=0$ for $i<0$.  
A DG algebra $A$ over a field $\K$ is {\it simply-connected} if it is non-negative and 
satisfies the condition that $H^0(A)=\K$ and $H^1(A)=0$. 
We refer to  a morphism between DG $A$-modules as a {\it quasi-isomorphism} 
if it induces an isomorphism on the homology. 
Note that unspecified DG $A$-modules are right DG $A$-module.  
Unless otherwise explicitly stated, it is assumed that a space has the
homotopy type of a CW complex whose cohomology with coefficients in the
underlying field is locally finite. 
Observe that the cochain algebra $C^*(X; \K)$ of a simply-connected space $X$ 
is simply-connected. 

The goal of this section is to state our results in more detail.  

Let $\T$ be a  triangulated category. 
To introduce the notion of the level, 
we first recall from \cite{ABIM} the
definition of the thickening of  $\T$. 
For a given object $C$ in $\T$, 
we define the $0$th thickening by $\text{{\tt thick}}^0_{\T}(C)=\{0\}$
and $\text{{\tt thick}}^1_{\T}(C)$ by the smallest strict full subcategory
which contains $C$ and is closed under taking finite coproducts,
retracts and all shifts. Moreover for $n > 1$ define inductively 
the $n$th thickening $\text{{\tt thick}}^n_{\T}(C)$ 
to be the smallest strict full subcategory of 
$\T$ which is closed under retracts and contains objects $M$
admitting an exact triangle 
$$
M_1 \to M \to M_2 \to \Sigma M_1
$$
in $\T$ for which $M_1$ and $M_2$ are in 
$\text{{\tt thick}}^{n-1}_{\T}(C)$ and $\text{{\tt thick}}^1_{\T}(C)$,
respectively. 

By definition, a full subcategory ${\mathcal C}$ of $\T$ is {\it thick} 
if it is additive, closed under retracts, 
and every exact triangle in $\T$ with two vertices in  ${\mathcal C}$  has its third vertex in ${\mathcal C}$.   
As mentioned in \cite[2.2.4]{ABIM}, the thickenings provide a filtration
of the smallest thick subcategory $\text{{\tt thick}}_{\T}(C)$ of $\T$ containing the
object $C$: 
$$
\{0\} = \text{{\tt thick}}^0_{\T}(C) \subset \cdots \subset 
\text{{\tt thick}}^n_{\T}(C) \subset \cdots \subset 
\cup_{n\geq 0}\text{{\tt thick}}^n_{\T}(C) = \text{{\tt thick}}_{\T}(C). 
$$

For an object $M$ in $\T$, 
we define a numerical invariant $\text{level}_{\T}^C(M)$, which is
called the {\it $C$-level of} $M$,  by 
$$
\text{level}_{\T}^C(M):= \inf \{n \in {\mathbb N}\cup \{0\} \ |
 M \in  \text{{\tt thick}}^{n}_{\T}(C) \}. 
$$
It is worth noting that $\text{level}_{\T}^C(M)$ is finite if and only if $M$  is 
{\it finitely built from} $C$ in the sense of Dwyer, Greenlees and
Iyenger \cite[3.15]{D-G-I}; see also \cite{D-G}.

Let $A$ be a DG algebra over a field $\K$. 
Let $\D(A)$ be the derived category of DG $A$-modules, namely the localization of the homotopy category 
$\text{H}(A)$ of DG $A$-modules with respect to quasi-isomorphisms; see \cite{Keller} and \cite[PART III]{KM}.   
Observe that $\D(A)$ is a triangulated category with the shift functor 
$\Sigma$ defined by $(\Sigma M)^n=M^{n+1}$ and that 
a triangle in $\D(A)$ comes from a cofibre sequence of the form 
$
M \stackrel{f}{\to} N \to C_f \to \Sigma M
$ 
in the homotopy category $\text{H}(A)$. 
Here $C_f$ denotes the mapping cone of $f$. 
In what follows, 
for any object $M$ in $\D(A)$, we may write 
 $\text{level}_{\D(A)}(M)$ for the $A$-level $\text{level}_{\D(A)}^A(M)$ of $M$.

Let $X$ be a simply-connected space and $\mathcal{TOP}_X$ the category
of connected spaces over $X$; that is, 
objects are maps to the space $X$ and 
morphisms from $\alpha : Y \to X$ to $\beta : Z \to X$ are maps $f : Y
\to Z$ such that $\beta f = \alpha$. 
For an object $\alpha: Y \to X$ in $\mathcal{TOP}_X$, 
the singular cochain complex $C^*(Y; \K)$ is considered  
a DG module over the DG algebra $C^*(X; \K)$ via the morphism of DG algebras induced by 
$\alpha$. 
We may write $C^*(Y; \K)^{\alpha}$ for this DG-module.   
Thus we have a contravariant functor  
$$
C^*( \ ; \K) : \mathcal{TOP}_X \to \D(C^*(X; \K)). 
$$
\begin{defn}
Let $\alpha : Y \to X$ be an object in $\mathcal{TOP}_X$.  
The {\it level} of the map $\alpha$, denoted 
$\text{level}_{X}(Y)_\K$, is the $C^*(X; \K)$-level of $C^*(Y; \K)^{\alpha}$ in the triangulated category 
$\D(C^*(X; \K))$, namely 
$\text{level}_{\D(C^*(X; \K))}^{C^*(X; \K)}(C^*(Y; \K)^{\alpha})$.
\end{defn}
   
When there is no danger of confusion, we will write $\text{level}_{X}(Y)$ in place of $\text{level}_{X}(Y)_\K$.  
Note that, in \cite{K3}, we call the level of a map $\alpha :  Y \to X$ 
the {\it cochain type level} of the {\it space} $Y$ and write 
$\text{level}_{\text{D}(C^*(X; \K))}(Y)$ for $\text{level}_{X}(Y)_{\K}$.   

A straightforward argument shows that the level is a weak homotopy invariant on 
$\mathcal{TOP}_X$. 

\begin{prop} \label{prop:inv}
Let $\alpha : Y \to X$ and $\beta : Z \to X$ be objects in $\mathcal{TOP}_X$. 
If there exists a weak homotopy equivalence $f  : Y \to Z$ such that $\alpha \simeq \beta\circ f$, 
then $$
\text{\em level}_{X}(Z)=\text{\em level}_{X}(Y). 
$$
\end{prop}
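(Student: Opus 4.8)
The plan is to reduce the statement to the fact that a quasi-isomorphism of DG algebras (or, here, a quasi-isomorphism between DG modules compatible with the algebra action) induces an equivalence of derived categories that preserves the relevant thickenings, hence the level. First I would observe that a weak homotopy equivalence $f : Y \to Z$ with $\alpha \simeq \beta \circ f$ induces, via the contravariant functor $C^*(\ ;\K)$, a quasi-isomorphism $C^*(f;\K) : C^*(Z;\K)^{\beta} \to C^*(Y;\K)^{\alpha}$ of DG $C^*(X;\K)$-modules: it is a quasi-isomorphism because $f$ is a weak homotopy equivalence, and it is a morphism of DG $C^*(X;\K)$-modules because $\alpha \simeq \beta \circ f$ guarantees that the two induced algebra actions agree up to the relevant homotopy (one should be a little careful here and note that $C^*(\ ;\K)$ sends homotopic maps to chain-homotopic maps, so the module structures literally coincide in the homotopy category $\text{H}(C^*(X;\K))$).

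Next I would pass to the derived category: a quasi-isomorphism of DG $A$-modules becomes an isomorphism in $\D(A)$, so $C^*(Z;\K)^{\beta} \cong C^*(Y;\K)^{\alpha}$ in $\D(C^*(X;\K))$. The final step is the elementary but essential remark that the $C$-level of an object in a triangulated category $\T$ depends only on its isomorphism class: each thickening $\text{{\tt thick}}^n_{\T}(C)$ is by construction a strict full subcategory closed under retracts (hence under isomorphism), so $M \cong M'$ in $\T$ implies $M \in \text{{\tt thick}}^n_{\T}(C) \iff M' \in \text{{\tt thick}}^n_{\T}(C)$, whence $\text{level}_{\T}^{C}(M) = \text{level}_{\T}^{C}(M')$. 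Applying this with $\T = \D(C^*(X;\K))$ and $C = C^*(X;\K)$ gives $\text{level}_X(Z) = \text{level}_X(Y)$.

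The only genuinely delicate point — the part I would spell out most carefully — is the claim that $C^*(f;\K)$ is a morphism of DG $C^*(X;\K)$-modules, not merely of complexes. The homotopy $\alpha \simeq \beta \circ f$ does not make the square of algebra actions commute on the nose; rather, it commutes up to a chain homotopy. One way around this is to work in the homotopy category $\text{H}(C^*(X;\K))$ from the start, where $C^*(\ ;\K)$ is genuinely functorial on homotopy classes of maps, so that $C^*(f;\K)$ is an honest morphism there and is a quasi-isomorphism; then localize. Alternatively, one replaces $\alpha$ and $\beta$ by a strictly commuting model (e.g.\ a fibrant replacement turning $f$ into a map over $X$ on the nose) and invokes Proposition~\ref{prop:inv}'s own hypothesis in that rigidified form. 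Either route is routine; everything else is formal.
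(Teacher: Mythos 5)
Your overall strategy coincides with the paper's: reduce to producing an isomorphism $C^*(Z;\K)^\beta \cong C^*(Y;\K)^\alpha$ in $\D(C^*(X;\K))$ and then observe that each thickening ${\tt thick}^n_{\T}(C)$ is a strict full subcategory, so the level depends only on the isomorphism class. You have also correctly isolated the one delicate point, namely that $f^*$ is not a $C^*(X;\K)$-linear map from $C^*(Z;\K)^\beta$ to $C^*(Y;\K)^\alpha$, because $\alpha$ and $\beta\circ f$ are only homotopic. However, neither of your two proposed fixes actually closes this gap as written.

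The first fix does not work as stated. The objects $C^*(Y;\K)^\alpha$ and $C^*(Y;\K)^{\beta\circ f}$ are genuinely distinct objects of the homotopy category $\text{H}(C^*(X;\K))$ (same underlying complex, different module structures), and morphisms in $\text{H}(C^*(X;\K))$ are still required to be \emph{strictly} $C^*(X;\K)$-linear; a chain homotopy between $\alpha^*$ and $(\beta\circ f)^*$ does not by itself produce such a morphism. Saying that ``the module structures literally coincide in the homotopy category'' has no meaning for strict DG modules. Your second fix (rigidification) is essentially correct but is left as an allusion. The paper's own resolution is the concrete zig-zag you are missing: choose a homotopy $H : Y\times I \to X$ from $\alpha$ to $\beta\circ f$, regard $C^*(Y\times I;\K)$ as a $C^*(X;\K)$-module via $H^*$, and consider
$$
C^*(Z;\K)^\beta \xrightarrow{\ f^*\ } C^*(Y;\K)^{H\e_1} \xleftarrow{\ \e_1^*\ } C^*(Y\times I;\K)^{H} \xrightarrow{\ \e_0^*\ } C^*(Y;\K)^{H\e_0} = C^*(Y;\K)^\alpha ,
$$
where $\e_i : Y \to Y\times I$ is the inclusion at level $i$. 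Each arrow is strictly $C^*(X;\K)$-linear because the relevant squares of spaces commute on the nose ($H\e_1 = \beta f$ and $H\e_0 = \alpha$), and each is a quasi-isomorphism since $f$, $\e_0$, $\e_1$ are weak equivalences. Inserting this zig-zag in place of your single map $C^*(f;\K)$ makes the rest of your argument go through unchanged.
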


\begin{proof} 
Let $H : Y \times I \to X$ be a homotopy from $\alpha$ to $\beta\circ f$ 
and $\e_i : Y \to Y\times I$ the inclusion defined by $\e(y) = (y, i)$ for $i = 0, 1$.  
We consider $C^*(Y\times I; \K)$ a DG $C^*(X;\K)$-module via the induced map 
$H^* : C^*(X;\K) \to C^*(Y\times I; \K)$.  Moreover $C^*(Y;\K)$ is endowed with a DG $C^*(X;\K)$-module 
structure via the map $(H\circ \e_i)^* : C^*(X;\K) \to C^*(Y;\K)$ for each $i = 0, 1$. 
Then there exists a sequence of quasi-isomorphisms of DG $C^*(X;\K)$-modules
$$
\xymatrix@C15pt@R15pt{
C^*(Z;\K)^{\beta} \ar[r]^(0.45){f^*}_(0.45)\simeq & C^*(Y;\K)^{H\circ \e_1} & C^*(Y\times I;\K)^{H} \ar[l]_{\e_1^*}^{\simeq} 
\ar[r]^(0.35){\e_0^*}_(0.35){\simeq} & C^*(Y;\K)^{H\circ \e_0} = C^*(Y;\K)^\alpha. 
}
$$
Thus we have the result. 
\end{proof}


It is natural to ask what aspect of topological spaces is captured by the notion of level. 
To begin to answer this question, it is helpful to compute the level of various interesting maps. 
As an aid to computation we provide a reduction theorem for levels of certain maps of 
$\K$-formal spaces.

Let $m_X : TV_X \stackrel{\simeq}{\to} C^*(X; {\mathbb K})$ 
be a minimal TV-model for a simply-connected space 
in the sense of Halperin and Lemaire \cite{H-L}; 
that is, $TV_X$ is a DG algebra whose underlying $\K$-algebra 
is the tensor algebra generated by a graded vector space $V_X$ and, for
any element $v \in V_X$, the image of $v$ by the differential is
decomposable; see also Appendix.

Recall that a space $X$ is {\it $\K$-formal} 
if it is simply-connected and there exists a sequence of quasi-isomorphisms 
of DG algebras 
$$
\xymatrix@C25pt@R15pt{
\h{X} & TV_X  \ar[l]_(0.4){\phi_X}^(0.4){\simeq} 
\ar[r]^(0.4){m_X}_(0.4){\simeq}  & C^*(X; {\mathbb K}),
} 
$$
where $m_X : TV_X \to C^*(X; {\mathbb K})$ denotes a minimal 
$TV$-model for $X$. 
Observe that spheres if $d > 1$, then the sphere $S^d$ is  $\K$-formal,  
for any field $\K$ \cite{E}\cite{B-T}. Moreover 
a simply-connected space whose cohomology with coefficients in $\K$ 
is a polynomial algebra generated by elements of even degree is
$\K$-formal \cite[Section 7]{Mu}.  

\begin{defn}
\label{defn:formalizable}
Let $q : E \to B$ and $f : X \to B$ be maps between 
$\K$-formal spaces. The pair $(q, f)$ is {\it relatively 
$\K$-formalizable} if there exists a commutative diagram up to homotopy of DG algebras 
$$
\xymatrix@C25pt@R20pt{
\h{E}  & TV_E \ar[l]_(0.4){\phi_E}^(0.4){\simeq} \ar[r]^(0.4){m_E}_(0.4){\simeq} 
    & C^*(E; {\mathbb K})  \\
\h{B} \ar[u]^{H^*(q)} \ar[d]_{H^*(f)}
  & TV_B  \ar[l]_(0.4){\phi_B}^(0.4){\simeq} \ar[r]^(0.4){m_B}_(0.4){\simeq} 
  \ar[u]_{\widetilde{q}} \ar[d]^{\widetilde{f}}
& C^*(B; {\mathbb K}) \ar[u]_{q^*} \ar[d]^{f^*} \\ 
\h{X} & TV_X  \ar[l]_(0.4){\phi_X}^(0.4){\simeq} \ar[r]^(0.4){m_X}_(0.4){\simeq} 
 & C^*(X; {\mathbb K}) ,
}
$$
in which horizontal arrows are quasi-isomorphisms.  
\end{defn}

In general, for given quasi-isomorphisms $\phi_E$, $m_E$, $\phi_B$ and $m_B$ 
as in Definition \ref{defn:formalizable}, there exist DG algebra  maps  
$\widetilde{q}_1$ and $\widetilde{q}_2$ which make the right upper square and
left one  homotopy commutative, respectively. 
However, in general, one cannot choose a map $\widetilde{q}$
which makes upper two squares homotopy commutative simultaneously even
if the maps $\phi_E$, $m_E$, $\phi_B$ and $m_B$  are replaced by other
quasi-isomorphisms;  see Remark \ref{rem:EMSS-K-formal}. 

The following proposition, which is deduced 
from the proof of \cite[Theorem 1.1]{K}, gives examples of 
relatively $\K$-formalizable pairs of maps. 

\begin{prop}
\label{prop:formalizable} 
A pair of maps $(q, f)$ with a common target is relatively 
$\K$-formalizable if each of the maps satisfies 
either of the two conditions below on a map $\pi : S \to T$. 

\medskip
\noindent
{\em(i)} $\h{S}$ and $\h{T}$ are polynomial algebras with at most countably
many generators in which the operation 
$Sq_1$ vanishes when the characteristic of the field $\K$ is 2. 
Here $Sq_1x = Sq^{n-1}x$ for $x$ of degree $n$; see \cite[4.9]{Mu}. 

\noindent
{\em (ii)} $\widetilde{H}^i(S; {\mathbb K})=0$ 
for any $i$ with 
$\dim\widetilde{H}^{i-1}(\Omega T; {\mathbb K}) 
 - \dim (QH^*(T;  {\mathbb K}))^i \neq 0$.   
\end{prop}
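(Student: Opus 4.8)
The plan is to reduce the statement to the construction carried out in the proof of \cite[Theorem 1.1]{K}, where one produces, for a single map $\pi : S \to T$ between $\K$-formal spaces, a homotopy commutative ladder relating the cohomology algebras, the minimal $TV$-models, and the singular cochain algebras. So first I would recall that set-up: given $\pi : S \to T$ satisfying either (i) or (ii), \cite{K} furnishes quasi-isomorphisms $\phi_T, m_T, \phi_S, m_S$ together with a DG algebra map $\widetilde\pi : TV_T \to TV_S$ making both the square with $\phi$'s and the square with $m$'s homotopy commutative \emph{simultaneously}. The crux is that conditions (i) and (ii) are exactly the hypotheses under which the Eilenberg--Moore type obstruction to lifting $H^*(\pi)$ compatibly through the minimal model vanishes; in case (i) this uses the formality of polynomial cohomology algebras (with the $Sq_1$ hypothesis guaranteeing the relevant $A_\infty$/Massey-type obstructions vanish in characteristic $2$), and in case (ii) it is a dimension count showing the obstruction group is zero.

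Next I would assemble the two ladders into the single $3 \times 3$ diagram of Definition \ref{defn:formalizable}. Apply the above to $q : E \to B$ to get the top two rows (with map $\widetilde q$) and to $f : X \to B$ to get the bottom two rows (with map $\widetilde f$). The only issue is that the two applications a priori produce two different choices of quasi-isomorphisms $\phi_B, m_B$ for the common target $B$; I would fix one choice of minimal $TV$-model $m_B : TV_B \to C^*(B;\K)$ and one formalizing quasi-isomorphism $\phi_B : TV_B \to H^*(B;\K)$ at the outset, and observe that the construction in \cite{K} can be run relative to any prescribed choice of model on the target — it only constructs the lift $\widetilde\pi$ and (if necessary) adjusts the model on the \emph{source}. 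With $\phi_B, m_B$ pinned down, the top ladder gives $\phi_E, m_E, \widetilde q$ and the bottom ladder gives $\phi_X, m_X, \widetilde f$, and splicing them along the middle row $H^*(B;\K) \leftarrow TV_B \to C^*(B;\K)$ yields exactly the diagram required, with all horizontal arrows quasi-isomorphisms and both composite squares homotopy commutative.

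Finally I would remark that the homotopy commutativity of the two small squares in each ladder is all that Definition \ref{defn:formalizable} asks for (it does not require strict commutativity), so no rectification step is needed; the homotopies from the two applications of \cite{K} are simply recorded side by side.

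The main obstacle is the second paragraph: one must be sure the construction in the proof of \cite[Theorem 1.1]{K} is genuinely \emph{one-sided}, i.e.\ that it leaves the target's model untouched and only modifies the source, so that the two invocations (for $q$ and for $f$) can share the middle row $TV_B$. If the original argument instead modifies the target as well, the fix is to run it once for $q$ to fix $TV_B$ and then, for the pair $(f, B)$, feed that same $TV_B$ in as the prescribed source-side datum for the map $f$ read in the appropriate direction — which is legitimate precisely because conditions (i) and (ii) are symmetric in a way that makes the obstruction vanish regardless of which end is held fixed. Checking this compatibility is where the real work lies; everything else is bookkeeping with homotopy commutative diagrams of DG algebras.
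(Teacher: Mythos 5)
The paper offers no proof of this proposition beyond the remark that it is ``deduced from the proof of \cite[Theorem 1.1]{K}''; that cited theorem concerns a pullback square over a $\K$-formal base, so its proof already constructs the full $3\times 3$ diagram of Definition~\ref{defn:formalizable} directly, and your reconstruction — two ladders sharing a middle row — is the same route. The compatibility worry you flag at the end is indeed the genuine content, and it is resolved exactly as you propose: fix $(TV_B, \phi_B, m_B)$ once, then produce $\widetilde q$ and $\widetilde f$ as DG algebra lifts out of $TV_B$; the obstruction to such a lift (and to making both small squares homotopy-commute with the same lift) depends only on the fixed target model and the source cohomology, so the two invocations never interfere.

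One point in your final paragraph should be corrected, though. You assert the fix is ``legitimate precisely because conditions (i) and (ii) are symmetric in a way that makes the obstruction vanish regardless of which end is held fixed.'' Condition~(ii) is not symmetric in $S$ and $T$: it is a vanishing constraint on $\widetilde H^*(S;\K)$ in degrees governed by $\widetilde H^{*-1}(\Omega T;\K)\cong V_T$ and $QH^*(T;\K)$, i.e.\ it says that the generators of $V_T$ which fail to survive to $QH^*(T)$ land in degrees where $\widetilde H^*(S;\K)$ vanishes. Condition~(i) is likewise not a symmetry statement but a joint polynomiality hypothesis. What makes the splice work is not symmetry but the asymmetric structure of the lifting problem itself: $TV_B$ sits as the common algebraic source of both $\widetilde q$ and $\widetilde f$, so once it is pinned down nothing else couples the two constructions.
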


Let $q : E \to B$ be a fibration over a space $B$ and 
$f : X \to B$ a map. 
Let ${\mathcal F}$ denote the pullback diagram
$$
\xymatrix@C20pt@R25pt{
E \times _B X \ar[r] \ar[d] & E \ar[d]^{q} \\
X  \ar[r]_{f}  & B   .
}
$$
Our main theorem on the computation of the level of a space is stated as follows.   

\begin{thm}
\label{thm:main} Suppose that the spaces $X$, $B$ and $E$ 
in the diagram ${\mathcal F}$ are $\K$-formal  
and the pair $(q, f)$ 
is relatively $\K$-formalizable. Then 
$$
\text{\em level}_{X}(E\times_B X) 
= \text{\em level}_{\text{\em D}(H^*(X; \K))}
(H^*(E;\K)\otimes_{H^*(B;\K)}^\text{\em L}H^*(X; \K)).  
$$
\end{thm}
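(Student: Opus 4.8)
The plan is to reduce the computation of a level in $\D(C^*(X;\K))$ to a computation in $\D(H^*(X;\K))$ by exhibiting a chain of triangulated functors that carry $C^*(X;\K)$ to $H^*(X;\K)$ and $C^*(E\times_B X;\K)^{f'}$ to $H^*(E;\K)\otimes^{\text{L}}_{H^*(B;\K)}H^*(X;\K)$, and that preserve levels on the relevant subcategories. First I would record the elementary fact that the $C$-level is preserved under any triangulated equivalence, and, more usefully, that if $F\colon \T\to\T'$ is a triangulated functor with $F(C)\simeq C'$ then $\text{level}_{\T'}^{C'}(F(M))\le \text{level}_{\T}^{C}(M)$; an equality holds when $F$ is an equivalence onto the thick subcategory generated by $C$, or more generally when there is a functor back sending $C'$ to $C$ and $F(M)$ to $M$. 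This is the abstract bookkeeping underlying the whole argument.

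Next I would use the relatively $\K$-formalizable hypothesis. The commutative-up-to-homotopy diagram in Definition \ref{defn:formalizable} gives, after the standard rectification of homotopy-commutative diagrams of DG algebras, a genuine zig-zag of quasi-isomorphisms of DG algebras connecting $C^*(B;\K)$ to $H^*(B;\K)$, compatible with the corresponding zig-zags for $E$ and $X$. A quasi-isomorphism of DG algebras $\varphi\colon A\to A'$ induces a restriction/extension adjunction whose derived functors $\varphi^*\colon \D(A')\to\D(A)$ and $A'\otimes^{\text{L}}_A(-)\colon \D(A)\to\D(A')$ are mutually inverse triangulated equivalences, and under this equivalence the free module $A$ corresponds to $A'$. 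Chasing the zig-zag associated to $f\colon X\to B$ therefore identifies $\D(C^*(X;\K))$ with $\D(H^*(X;\K))$ via a triangulated equivalence sending $C^*(X;\K)$ to $H^*(X;\K)$; by the first paragraph this already forces the two levels to agree once we match up the modules in question.

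The remaining and genuinely topological step is to identify the image of $C^*(E\times_B X;\K)^{f'}$ under this equivalence with $H^*(E;\K)\otimes^{\text{L}}_{H^*(B;\K)}H^*(X;\K)$. Here I would invoke the Eilenberg–Moore spectral sequence, or rather its sharper incarnation: for the pullback $\F$ with $q$ a fibration, there is a natural quasi-isomorphism of DG $C^*(X;\K)$-modules $C^*(E\times_B X;\K)\simeq C^*(E;\K)\otimes^{\text{L}}_{C^*(B;\K)}C^*(X;\K)$ (this is the classical Eilenberg–Moore comparison, valid since $X$, $B$, $E$ are simply-connected with locally finite cohomology). Transporting the right-hand derived tensor product across the compatible zig-zags of DG algebra quasi-isomorphisms — using that derived base change $A'\otimes^{\text{L}}_A(-)$ commutes with derived tensor products and that the maps $\widetilde q$, $\widetilde f$ make the whole diagram compatible — turns $C^*(E;\K)\otimes^{\text{L}}_{C^*(B;\K)}C^*(X;\K)$ into $H^*(E;\K)\otimes^{\text{L}}_{H^*(B;\K)}H^*(X;\K)$ as a DG $H^*(X;\K)$-module. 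Combining this with the equivalence of the previous paragraph yields the asserted equality of levels.

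The main obstacle I expect is precisely the need to rectify the homotopy-commutative diagram of Definition \ref{defn:formalizable} into an honest diagram, and to check that the Eilenberg–Moore quasi-isomorphism is natural enough to be compatible with the DG algebra maps $\widetilde q$ and $\widetilde f$ — i.e. that the identification $C^*(E\times_B X;\K)\simeq C^*(E;\K)\otimes^{\text{L}}_{C^*(B;\K)}C^*(X;\K)$ is functorial in the diagram, so that it survives passage through the zig-zag to $H^*(-;\K)$. One must be careful that the two squares in the definition are only separately rectifiable in general (as the remarks before Proposition \ref{prop:formalizable} warn), so the argument should be arranged to use a single consistent choice of zig-zag; semifree resolutions over the $TV$-models provide the needed flexibility. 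Once functoriality is secured, the level equality is formal from the invariance of levels under triangulated equivalence.
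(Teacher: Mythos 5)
Your proposal is correct and follows essentially the same route as the paper: the formality zig-zag for $X$ induces a level-preserving equivalence $\D(C^*(X;\K))\simeq\D(H^*(X;\K))$ carrying $C^*(X;\K)$ to $H^*(X;\K)$, and the Eilenberg--Moore quasi-isomorphism together with the relative formalizability data identifies the image of $C^*(E\times_B X;\K)$ with $H^*(E;\K)\otimes^{\mathrm{L}}_{H^*(B;\K)}H^*(X;\K)$. The compatibility issue you flag is exactly the content of the paper's key lemma, which resolves it by making $C^*(E)$ and $C^*(X)$ into modules over the cylinder object $TV_B\wedge I$ via the given homotopies rather than rectifying the diagram of algebras.
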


As Example \ref{ex:S7} illustrates, the condition that $X$, $B$ and $E$ in ${\mathcal F}$ 
are  $\K$-formal is not sufficient. 
We refer the reader to Section 3 for the definition of the left derived functor $-\otimes^L -$.

By virtue of Theorem \ref{thm:main} and 
Proposition \ref{prop:formalizable}, we have  

\begin{prop}
\label{prop:bundle} Let $G$ be a simply-connected Lie group and 
$G \to E_f \to S^4$ a $G$-bundle with the classifying map  
$f : S^4 \to BG$. Suppose that $H^*(BG; \K)$ is a polynomial algebra
 on generators of even degree. Then 
$$
\text{\em level}_{S^4}(E_f)=
\left\{
\begin{array}{l}
2  \ \ \text{if} \ H^4(f; \K)\neq 0,  \\
1  \ \ \text{otherwise}. 
\end{array}
\right.
$$ 
\end{prop}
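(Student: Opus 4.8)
The plan is to reduce the computation of $\text{level}_{S^4}(E_f)$, via Theorem \ref{thm:main}, to the purely algebraic computation of $\text{level}_{\D(H^*(S^4;\K))}(H^*(BG;\K)\otimes^{\text{L}}_{H^*(S^4;\K)}H^*(S^4;\K))$. First I would check that the hypotheses of Theorem \ref{thm:main} are met in the diagram $\F$ coming from pulling back $G\to E_f\to BG$ along $f:S^4\to BG$: the sphere $S^4$ is $\K$-formal for every field $\K$, $BG$ is $\K$-formal because $H^*(BG;\K)$ is a polynomial algebra on even-degree generators, and $E_f$ is $\K$-formal as well (this is where I expect I may need to invoke, or re-derive, the formality of the total space — for instance via the fact that $E_f \to S^4$ has a cross-section up to the relevant skeleton, or that its cohomology is again of a suitable polynomial/exterior type; I would cite the relevant facts about bundles over $S^4$). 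For relative $\K$-formalizability of the pair $(q,f)$, I would apply Proposition \ref{prop:formalizable}: the fibration $q:E_f\to BG$ can be arranged so that $H^*$ of base and total space are polynomial (condition (i), noting that $S^4$ has trivial $Sq_1$ for degree reasons in characteristic $2$), and the map $f:S^4\to BG$ satisfies condition (ii) since $\widetilde H^i(S^4;\K)=0$ except in degree $4$, and one checks the dimension count $\dim\widetilde H^{i-1}(\Omega BG;\K)-\dim(QH^*(BG;\K))^i$ vanishes in degree $4$ because of the polynomial structure (here $\Omega BG\simeq G$ and the loop-space homology is an exterior/divided-power type algebra dual to the generators of $H^*(BG)$).

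With Theorem \ref{thm:main} in hand, the task becomes computing the $H^*(S^4;\K)$-level of $N:=H^*(BG;\K)\otimes^{\text{L}}_{H^*(BG;\K)\to H^*(S^4;\K)}H^*(S^4;\K)$. Write $A=H^*(S^4;\K)=\K[x]/(x^2)$ with $|x|=4$, and $R=H^*(BG;\K)$ a polynomial algebra. The map $H^*(f):R\to A$ is determined by where it sends the polynomial generators; since $A$ is concentrated in degrees $0$ and $4$, a generator of degree $4$ goes to a scalar multiple of $x$ and all generators of degree $\neq 4$ (and the degree-$4$ generators if $H^4(f;\K)=0$) go to $0$. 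I would split into the two cases. If $H^4(f;\K)=0$, then $H^*(f)$ factors through $\K$ in positive degrees, so $N\simeq A\otimes^{\text{L}}_R\K$-type object which, after computing, is a free (or trivially built) $A$-module — explicitly a direct sum of shifts of $A$ itself — hence has $A$-level $1$. If $H^4(f;\K)\neq 0$, then after a change of generators $R\cong \K[y]\otimes R'$ with $|y|=4$, $H^*(f)(y)=x$ and $H^*(f)$ kills $R'^+$; then $N\simeq (\K[y]\otimes^{\text{L}}_{\K[y]}A)\otimes(R'\otimes^{\text{L}}_{R'}\K)$, i.e. $N\simeq A\otimes_\K \text{Tor}^{R'}(\K,\K)$, and since $R'$ is polynomial on $\geq 1$ generators (it is nontrivial precisely because $G$ is a nonabelian simply-connected Lie group, so $H^*(BG)$ has generators beyond a single degree-$4$ class, or if it is exactly $\K[y]$ then $R'=\K$ and one must handle that degenerate sub-case — but then $G=S^3$ essentially and one still gets level $2$ from the $\K[y]/(y^2)$ resolution), the Koszul resolution shows $\text{Tor}^{R'}(\K,\K)$ is built in exactly two steps as an $A$-module, giving level $2$; the lower bound $\ge 2$ follows since $N$ is not isomorphic in $\D(A)$ to a sum of shifts of $A$ (e.g. its homology, or its Ext over $A$, is not free).

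The key computational input for the level is the behavior of levels under the operations $\otimes^{\text{L}}$ and under finite filtrations; I would use that $\text{level}$ is subadditive on triangles (by definition of the thickenings) to get the upper bounds $\le 2$, and a semifree-resolution / "ghost lemma"-type argument (or a direct computation of $\operatorname{Hom}_{\D(A)}(N,\Sigma^* A)$ composition products) to get the lower bound $2$ in the case $H^4(f;\K)\neq 0$. The main obstacle I anticipate is twofold: first, verifying the $\K$-formality of $E_f$ and the relative $\K$-formalizability of $(q,f)$ cleanly for \emph{all} fields $\K$ — including characteristic $2$ where the $Sq_1$ hypothesis in Proposition \ref{prop:formalizable}(i) must be checked — rather than just over $\Q$; and second, pinning down the lower bound $\text{level}\ge 2$, i.e. showing $N$ genuinely requires two building steps and is not a retract of a free $A$-module, which amounts to exhibiting a nonzero composable pair of "ghost maps" out of $N$ or equivalently showing $\operatorname{Ext}_A^*(N,N)$ is not concentrated in the way a free module forces. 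Everything else — the case analysis, the Koszul/Tor computation over the polynomial ring, and the identification $\Omega BG\simeq G$ — is routine.
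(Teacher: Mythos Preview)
Your setup of the pullback diagram is wrong, and this propagates through the whole argument. The bundle $G\to E_f\to S^4$ is \emph{itself} the pullback of the universal bundle $\pi:EG\to BG$ along $f:S^4\to BG$. So in the notation of Theorem \ref{thm:main} one takes $X=S^4$, $B=BG$, $E=EG$, and $q=\pi$; then $E\times_BX=E_f$ and the formula reads
\[
\text{level}_{S^4}(E_f)=\text{level}_{\D(H^*(S^4))}\bigl(H^*(EG)\otimes^{\text{L}}_{H^*(BG)}H^*(S^4)\bigr)
=\text{level}_{\D(H^*(S^4))}\bigl(\K\otimes^{\text{L}}_{H^*(BG)}H^*(S^4)\bigr),
\]
since $EG$ is contractible. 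Your expression $H^*(BG)\otimes^{\text{L}}_{H^*(S^4)}H^*(S^4)$ (or the variant $H^*(BG)\otimes^{\text{L}}_{H^*(BG)}H^*(S^4)$) is not the right object: the first is not defined, and the second is just $H^*(S^4)$. This also explains why you were forced to worry about the $\K$-formality of $E_f$: in the correct diagram $E=EG$ is contractible and trivially formal, so that obstacle disappears entirely.

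The error shows up concretely in your case $H^4(f;\K)\neq 0$. Writing $R=\K[y]\otimes R'$ with $H^*(f)(y)=x$, your decomposition gives $N\simeq(\K[y]\otimes^{\text{L}}_{\K[y]}A)\otimes(R'\otimes^{\text{L}}_{R'}\K)=A\otimes_{\K}\text{Tor}^{R'}(\K,\K)$, which is a \emph{free} $A$-module and hence has level $1$, not $2$; your assertion that it is ``built in exactly two steps'' is unjustified and in fact false for that object. With the correct formula $\K\otimes^{\text{L}}_{R}A$, the Koszul resolution of $\K$ over $R$ yields
\[
\bigl(\wedge(s^{-1}x_2,\dots,s^{-1}x_l),0\bigr)\otimes\bigl(\wedge(s^{-1}x_1)\otimes A,\ \delta(s^{-1}x_1)=z_4\bigr),
\]
and the second tensor factor has cohomology $\K\oplus\Sigma^{-7}\K$: it is \emph{not} free over $A$ but is isomorphic to $F_{S^4}(\Sigma^{-3}Z_1)$ (Lemma \ref{lem:key2}). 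Thus $N$ is a finite sum of shifts of a single molecule $\Sigma^{-3}Z_1$, and the exact value $\text{level}=2$ then follows immediately from Schmidt's computation $\text{level}(Z_i)=i+1$ (Proposition \ref{prop:Z}); no ghost-map argument is needed.
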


\begin{prop}
\label{prop:variation} Let $G$ be a simply-connected 
Lie group and $H$ a maximal rank  subgroup. 
Let $G/H \to E_g \to S^4$ be the pullback of the fibration 
$G/H \to BH \stackrel{\pi}{\to} BG$ by a map $g : S^4 \to BG$. 
  Suppose that $H^*(BG; \K)$ and  $H^*(BH; \K)$ are polynomial algebras
 on generators with even degree. Then 
$$
\text{\em level}_{S^4}(E_g)=1.
$$ 
\end{prop}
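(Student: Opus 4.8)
The plan is to apply Theorem~\ref{thm:main} to the pullback diagram $\mathcal{F}$ with $B = BG$, $E = BH$, $X = S^4$, $q = \pi$ and $f = g$. First I would verify the hypotheses. The spaces $BG$ and $BH$ are $\K$-formal because their cohomology rings are polynomial algebras on even-degree generators, so $\K$-formality follows from \cite[Section 7]{Mu}; and $S^4$ is $\K$-formal for any field $\K$. To invoke Theorem~\ref{thm:main} I also need the pair $(\pi, g)$ to be relatively $\K$-formalizable. For this I would appeal to Proposition~\ref{prop:formalizable}: the map $\pi : BH \to BG$ satisfies condition~(i), since both $H^*(BH;\K)$ and $H^*(BG;\K)$ are polynomial on even-degree generators and, degree-parity forcing all odd operations to vanish, in particular $Sq_1$ vanishes; and the map $g : S^4 \to BG$ satisfies condition~(i) as well (or condition~(ii), noting that $\widetilde{H}^i(S^4;\K)$ is concentrated in degree $4$). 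Hence $(\pi,g)$ is relatively $\K$-formalizable and Theorem~\ref{thm:main} gives
$$
\text{level}_{S^4}(E_g) = \text{level}_{\text{D}(H^*(S^4;\K))}\bigl(H^*(BH;\K)\otimes^{\text{L}}_{H^*(BG;\K)} H^*(S^4;\K)\bigr).
$$

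The remaining, and main, task is to compute the right-hand side, i.e.\ to show that the object $N := H^*(BH;\K)\otimes^{\text{L}}_{H^*(BG;\K)} H^*(S^4;\K)$ has $H^*(S^4;\K)$-level exactly $1$. Write $R = H^*(BG;\K)$, a polynomial algebra, and $A = H^*(S^4;\K) = \K[x]/(x^2)$ with $|x| = 4$, viewed as an $R$-algebra via $g^* : R \to A$. The key structural point is that, because $H$ has maximal rank in $G$, the fibration $G/H \to BH \to BG$ is \emph{totally non-homologous to zero} (equivalently, $H^*(BH;\K)$ is free as a module over $H^*(BG;\K)$ — this is the classical statement for maximal-rank subgroups, both rings being polynomial with the map injective and $H^*(BH;\K)$ finitely generated free over $H^*(BG;\K)$ up to the appropriate regularity). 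Thus $H^*(BH;\K)$ is a free $R$-module, so the derived tensor product collapses: $N \simeq H^*(BH;\K)\otimes_R A$ as a DG $A$-module, concentrated in homological degree $0$, i.e.\ it is an honest graded $A$-module (with zero differential). Consequently $N$ is a free, hence in particular a graded-free, $A$-module — or at least a direct summand of a free one after accounting for grading shifts — so $N$ lies in $\text{thick}^1_{\D(A)}(A)$, giving $\text{level}_{\D(A)}(N) \le 1$. Since $N \neq 0$, the level is exactly $1$, and the proposition follows.

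The step I expect to be the main obstacle is the freeness claim: showing that $H^*(BH;\K)\otimes^{\text{L}}_{H^*(BG;\K)} H^*(S^4;\K)$ is concentrated in a single homological degree, equivalently that $\mathrm{Tor}^R_{>0}\bigl(H^*(BH;\K), A\bigr) = 0$. Over $\Q$ or over a field where $H^*(BH;\K)$ is genuinely a free $H^*(BG;\K)$-module this is immediate, but one should check that the maximal-rank hypothesis indeed delivers freeness with $\K$-coefficients for the relevant $\K$ (the Borel-type argument; e.g.\ via the Eilenberg--Moore spectral sequence of the fibration, which degenerates exactly when $G/H \to BH \to BG$ is TNHZ). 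A cleaner alternative that sidesteps the freeness subtlety is to argue directly at the level of $TV$-models as in the proof of Theorem~\ref{thm:main}: build a semifree resolution of $A$ over $R$ and observe that pulling back along $g^*$ produces a semifree $A$-module needing only one cell-attachment step, i.e.\ lying in $\text{thick}^1$; this is essentially the computation already carried out for $\text{level}_{S^4}(E_f)$ in Proposition~\ref{prop:bundle} in the case $H^4(f;\K) = 0$, and the same mechanism — the class $x \in H^4(S^4)$ being hit, or not, by a generator of $R$ — controls whether the level is $1$ or $2$. Here the maximal-rank condition guarantees that a suitable degree-$4$ polynomial generator of $H^*(BG;\K)$ maps to a nonzero (hence, up to scalar, the) generator $x$ of $H^4(S^4;\K)$, so we land in the level-$1$ case uniformly.
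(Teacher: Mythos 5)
Your main line of argument is exactly the paper's proof: apply Theorem \ref{thm:main} after checking relative $\K$-formalizability via Proposition \ref{prop:formalizable}, then use the maximal-rank hypothesis to conclude (as in Baum's theorem, which the paper cites) that $H^*(BH;\K)$ is a free $H^*(BG;\K)$-module, so the derived tensor product is a coproduct of shifts of $H^*(S^4;\K)$ and has level $1$. Two small corrections: $g : S^4 \to BG$ cannot satisfy condition (i), since $H^*(S^4;\K)$ is not a polynomial algebra --- your parenthetical fallback to condition (ii) is the right (and the paper's) choice; and your closing ``cleaner alternative'' is backwards, since in Proposition \ref{prop:bundle} it is $H^4(f;\K)\neq 0$ that forces level $2$, and the maximal-rank condition says nothing about $H^4(g)$ --- the point here is rather that freeness of $H^*(BH;\K)$ over $H^*(BG;\K)$ makes the level $1$ irrespective of $g$.
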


As an introduction of the meaning of the level of a maps $f$, we show that it provides an lower bound 
on the number of stages in a factorization 
$$
Y=Y_c \stackrel{\pi_c}{\longrightarrow} Y_{c-1} \stackrel{\pi_{c-1}}{\longrightarrow} \cdots  \stackrel{\pi_2}{\longrightarrow} Y_{1} 
\stackrel{\pi_1}{\longrightarrow} Y_{0} \stackrel{\pi_0}{\longrightarrow} B  
$$
of $f$, where each $\pi_i$ is a fibration with an odd sphere as fibre.

\begin{prop}
\label{prop:pile}
Suppose that there exists a sequence of fibrations 
\begin{eqnarray*}
S^{2m_1+1} \to Y_1 \stackrel{\pi_1}{\longrightarrow} B
 \times (\displaystyle{\times_{i=1}^sS^{2n_i+1}}), & 
S^{2m_2+1}  \to Y_2 \stackrel{\pi_2}{\longrightarrow} Y_1, \  ....., \\  
S^{2m_c+1} \to Y_c \stackrel{\pi_c}{\longrightarrow} Y_{c-1} &  
\end{eqnarray*}
in which $B$ is simply-connected and $n_i, m_j \geq 1$ 
for any $i$ and $j$. We regard $Y_c$ as a space over $B$ via 
the composite $\pi_0\circ \pi_1 \cdots \circ \pi_c$, where  
$\pi_0 : B \times (\displaystyle{\times_{i=1}^lS^{2n_i+1}}) \to B$ 
is the projection onto the first factor.     
Then 
$$
\text{\em level}_{B}(Y_c)_{\Q} \leq c+1.
$$
\end{prop}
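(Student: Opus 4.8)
The plan is to induct on the number $c$ of fibrations, with the base case $c=0$ handled by the observation that $Y_0 = B \times (\times_{i=1}^s S^{2n_i+1})$ is built over $B$ by tensoring $C^*(B;\Q)$ with the cochains of a product of odd spheres. Concretely, over $\Q$ each odd sphere $S^{2n_i+1}$ has cochain algebra quasi-isomorphic to an exterior algebra $\Lambda(x_i)$ on one generator, which is a Koszul complex on a single element; hence $C^*(Y_0;\Q)$ is quasi-isomorphic, as a $C^*(B;\Q)$-module, to a Koszul-type complex and lies in $\text{{\tt thick}}^1$ — indeed each single exterior generator contributes at most one multiplication step. Keeping careful track, $\text{level}_B(Y_0)_\Q \le 1$, which is $c+1$ for $c=0$. (If the exact count for a product of $s$ spheres is delicate, note that one only needs $\le 1$ here because thickenings are closed under the relevant coproducts; the successive spherical fibrations, not the initial product, are what drive the count up.)

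For the inductive step, suppose $\text{level}_{B}(Y_{c-1})_\Q \le c$. The fibration $S^{2m_c+1} \to Y_c \xrightarrow{\pi_c} Y_{c-1}$ has odd-sphere fibre, so its Eilenberg–Moore model exhibits $C^*(Y_c;\Q)$ as $C^*(Y_{c-1};\Q)$ with one exterior/polynomial generator adjoined — more precisely, there is a semifree extension $C^*(Y_{c-1};\Q) \to C^*(Y_{c-1};\Q)\otimes \Lambda(y)$ modeling $\pi_c$, and this sits in a cofibre sequence realizing $C^*(Y_c;\Q)$ as the cone-type object built from two copies of $C^*(Y_{c-1};\Q)$ (the Koszul complex on the single element $dy \in C^*(Y_{c-1};\Q)$). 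Thus $C^*(Y_c;\Q)$ fits in an exact triangle
$$
M_1 \to C^*(Y_c;\Q) \to M_2 \to \Sigma M_1
$$
in $\D(C^*(B;\Q))$ with $M_1, M_2$ both quasi-isomorphic to $C^*(Y_{c-1};\Q)$ as $C^*(B;\Q)$-modules. Since $M_2 \in \text{{\tt thick}}^{1}$ would be too strong to ask, the right formulation uses that $M_1 \in \text{{\tt thick}}^{c}$ and $M_2 \in \text{{\tt thick}}^{c}$ as well; one then invokes the inequality $\text{level}(M) \le \text{level}(M_1) + \text{level}(M_2)$ for a triangle (equivalently, iterated use of the definition of $n$th thickening: building $M$ from something in $\text{{\tt thick}}^{c}$ and something in $\text{{\tt thick}}^{c}$ lands in $\text{{\tt thick}}^{c+1}$ only if one of the two is in $\text{{\tt thick}}^1$; in general it lands in $\text{{\tt thick}}^{2c}$, so this naive bound is not good enough). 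The fix: realize the Koszul complex on a single element $dy$ as a genuine one-step cone, $M_1 \xrightarrow{\cdot dy} M_1 \to C^*(Y_c;\Q)$, so that $M_2 \simeq \Sigma M_1$; then $C^*(Y_c;\Q)$ is an extension of $\Sigma M_1$ by $M_1$ where the \emph{second} term lies in $\text{{\tt thick}}^1(C^*(Y_{c-1};\Q)) \subset \text{{\tt thick}}^1$ relative to the building block already available, giving the genuine $+1$.

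The main obstacle — and the step I would be most careful about — is exactly this last point: turning the odd-spherical fibration into a single honest cone so that the level increases by exactly one rather than doubling. This requires the Eilenberg–Moore / semifree model of $\pi_c$ to be an extension by a \emph{single} exterior generator (which is where oddness of the sphere and characteristic $0$ are essential — over $\Q$ there is no higher differential and the model of an odd sphere is $(\Lambda(y), 0)$), and it requires the compatibility that building $C^*(Y_c;\Q)$ from a $\text{{\tt thick}}^c$ object via one cone on multiplication by an element of $C^*(B;\Q)$-module gives a $\text{{\tt thick}}^{c+1}$ object. I would make this precise by showing that a cone on a self-map of $N$ lies in $\text{{\tt thick}}^{\text{level}(N)+1}$ — which follows because the cone triangle has $N$ (shifted) as its third vertex and $N \in \text{{\tt thick}}^{\text{level}(N)}$, while the defining inclusion only charges one extra level when the "$M_2$" slot is filled by $\Sigma N \in \text{{\tt thick}}^{\text{level}(N)}$... so in fact I would instead route through the standard fact (used implicitly throughout \cite{ABIM}) that a two-term extension where one term is a shift of the other built block costs one level, i.e. a Koszul complex on one element over $A$ has $A$-level at most $2$ and on $\ell$ elements at most $\ell+1$; here each fibration adds one Koszul element, yielding the clean bound $c+1$. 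Assembling: base level $\le 1$ plus $c$ successive $+1$'s gives $\text{level}_B(Y_c)_\Q \le c+1$.
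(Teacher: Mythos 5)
Your overall strategy — pass to Sullivan models over $\Q$, exploit that each odd‐spherical fibration adjoins one exterior generator, and absorb the initial product of spheres into the bottom layer so that only the $c$ successive fibrations drive the count — is exactly the paper's. The paper implements this by writing the model of $Y_c$ as $\wedge V_c = \wedge V_B\otimes \wedge(y_{01},\dots,y_{0s},x_1,\dots,x_c)$ and defining the semifree filtration $F_l = \wedge V_B\otimes\Q\{\,y_{01}^{\e_{01}}\cdots y_{0s}^{\e_{0s}}x_1^{\e_1}\cdots x_l^{\e_l}\,\}$; since the $y_{0i}$ are closed, each $F_l/F_{l-1}$ is literally a direct sum of shifts of $\wedge V_B$, so the filtration has class at most $c$ and Theorem~\ref{thm:ABIM-I} gives level $\leq c+1$.

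The middle of your argument contains a genuine gap, which you half-sense but do not resolve. A cone on a self-map of $N$ sits in a triangle $N\to\text{cone}\to\Sigma N$, and the definition of $\text{{\tt thick}}^{n}_{\T}(C)$ is asymmetric: the $M_2$-slot must lie in $\text{{\tt thick}}^{1}_{\T}(C)$, not in $\text{{\tt thick}}^{n-1}_{\T}(C)$. So if $N=C^*(Y_{c-1};\Q)$ has $C^*(B;\Q)$-level $c$, your triangle $M_1\to C^*(Y_c;\Q)\to \Sigma M_1$ only certifies membership in $\text{{\tt thick}}^{2c}$, because $\Sigma M_1$ is at level $c$ over $C^*(B;\Q)$, not level $1$. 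Your proposed ``fix'' then invokes $\text{{\tt thick}}^1(C^*(Y_{c-1};\Q))\subset\text{{\tt thick}}^1$, but that inclusion is false: thickenings are taken with respect to the generator $C^*(B;\Q)$, and $C^*(Y_{c-1};\Q)$ is not a retract of a coproduct of shifts of $C^*(B;\Q)$ once $c\geq 2$. Your eventual fallback to ``a Koszul complex on $\ell$ elements has level $\leq\ell+1$'' is the right idea, but as stated it is not proved and, applied naively, it counts all $s+c$ exterior generators, giving $s+c+1$ rather than $c+1$. The point you need to make precise — and the point the paper's filtration makes — is that the subquotients of a suitable filtration must be free over the base algebra $\wedge V_B$ itself (landing in $\text{{\tt thick}}^1(\wedge V_B)$), and that the $s$ generators $y_{0i}$ can be pushed entirely into $F_0$ because their differential vanishes, so they cost nothing.
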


By using Proposition \ref{prop:pile} and the homological information of
each vertex of the Auslander-Reiten quiver of 
$\text{D}(C^*(S^d; \K))$ described in Theorem \ref{thm:J}, 
we can construct an object in $\mathcal{TOP}_{S^d}$ of arbitrary level,  
provided that $\K=\Q$.

\begin{thm}
\label{thm:ex-level} 
For any integers $l\geq 1$ and $d > 1$, there exists an object $P_l \to S^d$
 in $\mathcal{TOP}_{S^d}$ such that 
$$
\text{\em level}_{S^d}(P_l)_{\Q}= l. 
$$ 
\end{thm}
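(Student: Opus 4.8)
The plan is to realize the desired object $P_l \to S^d$ as an iterated pullback of spherical fibrations and then apply Proposition~\ref{prop:pile} for the upper bound $\text{level}_{S^d}(P_l)_\Q \le l$, reserving the harder work for the matching lower bound. More precisely, I would first recall from Theorem~\ref{thm:J} the explicit homological description of the vertices in the Auslander--Reiten component of $\D^c(C^*(S^d; \Q))$ containing $Z_0 = C^*(S^d; \Q)$; each vertex $Z_k$ (and its shifts $\Sigma^{-j(d-1)}Z_k$) has cohomology concentrated in a controlled range of degrees, and the position of $Z_k$ in the quiver records exactly how many mapping cones are needed to build it from $Z_0$, i.e.\ its level. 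The strategy is to find, for each $l$, a space $P_l$ over $S^d$ whose cochain complex has $\Sigma^{?}Z_{l-1}$ (the vertex of level $l$ in the appropriate row) as a summand, or more robustly as a ``molecule'' detecting level exactly $l$.

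For the construction side I would build $P_l$ as the top space in a tower
$$
P_l = P_{l}^{(c)} \to P_{l}^{(c-1)} \to \cdots \to P_l^{(1)} \to S^d
$$
of fibrations with odd-sphere fibres, chosen so that Proposition~\ref{prop:pile} applies with $c = l-1$, giving $\text{level}_{S^d}(P_l)_\Q \le l$. Concretely one can take iterated pullbacks along $S^d \to K(\Q, n_i)$ type maps (or use the classifying maps producing the fibrations $S^{2n_i+1} \to Y_i \to Y_{i-1}$ as in the hypothesis of Proposition~\ref{prop:pile}), with the sphere dimensions tuned so that the resulting $\Q$-cohomology of $P_l$ matches that of (a shift of) $Z_{l-1}$ in the quiver. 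The point of using rational coefficients is that over $\Q$ all spheres and their products are formal, so Theorem~\ref{thm:main} is available to reduce $\text{level}_{S^d}(P_l)_\Q$ to a purely algebraic level computation in $\D(H^*(S^d; \Q))$.

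The main obstacle is the lower bound $\text{level}_{S^d}(P_l)_\Q \ge l$. For this I would argue in the derived category $\D(H^*(S^d;\Q)) = \D(\Q[x]/(x^2))$ with $|x| = d$: using Theorem~\ref{thm:main}, $\text{level}_{S^d}(P_l)_\Q$ equals the $H^*(S^d;\Q)$-level of the explicit DG module $M_l := H^*(P_l;\Q) \otimes^{\text L}_{H^*(B;\Q)} H^*(S^d;\Q)$ (built from the tower data). One then needs a lower bound on levels in $\D^c(\Q[x]/(x^2))$. The cleanest tool is a ``ghost'' or Loewy-length argument: each exact triangle in the definition of the thickening can raise an appropriate invariant — for instance the length of a filtration by $H^*(S^d;\Q)$-free pieces, or the number of nonzero ``$x$-multiplication'' stages visible in a minimal semifree resolution — by at most one. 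Concretely, I would show that if $\text{level}_{\D(\Q[x]/(x^2))}(M) \le n$ then some homological quantity of $M$ (e.g.\ the number of consecutive degrees in which $H^*(M)$ is nonzero modulo the periodicity $d-1$, or the nilpotency exponent of a natural operator on $\mathrm{Ext}$) is bounded by a linear function of $n$; then verify that for our $M_l$ this quantity is exactly $l$, forcing $\text{level} \ge l$. Identifying the right invariant and checking it is additive-plus-one under the triangles of the thickening filtration — essentially reproving, in this small concrete algebra, the kind of lower-bound lemma of \cite{ABIM} — is where the real work lies; the construction and the upper bound via Proposition~\ref{prop:pile} are comparatively routine once the sphere dimensions are chosen to hit the desired vertex $Z_{l-1}$ of Theorem~\ref{thm:J}.
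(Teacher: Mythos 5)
Your upper-bound strategy matches the paper: build $P_l$ as an iterated tower of odd-sphere fibrations over $S^d$ so that Proposition~\ref{prop:pile} gives $\text{level}_{S^d}(P_l)_\Q\le l$. The lower bound is where your plan diverges from the paper and where the gaps lie.

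First, you propose to invoke Theorem~\ref{thm:main} to reduce $\text{level}_{S^d}(P_l)_\Q$ to the $H^*(S^d;\Q)$-level of an explicit derived tensor product. That theorem requires the whole pullback diagram to be relatively $\Q$-formalizable, not merely that all spaces involved be $\Q$-formal, and the paper stresses (Example~\ref{ex:S7}) that formality of each space is not sufficient. In the paper's construction the total space is the spatial realization of a Koszul--Sullivan extension with a genuinely nontrivial differential, and there is no reason to expect the relevant pair of maps to be relatively $\Q$-formalizable; the paper deliberately avoids Theorem~\ref{thm:main} here and works directly with Sullivan models, verifying compactness of $C^*(P_l;\Q)$ in $\D(C^*(S^d;\Q))$ via Proposition~\ref{prop:compact}.

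Second, for the algebraic lower bound you suggest reproving ``the kind of lower-bound lemma of \cite{ABIM}'' via a ghost/Loewy-length argument in $\D(\Q[x]/(x^2))$. This overlooks that the paper already has the needed statement as a citation: Proposition~\ref{prop:Z} (Schmidt) gives $\text{level}_{\text{D}(C^*(S^d;\K))}(Z_i)=i+1$ exactly. The paper's actual lower-bound argument is therefore more concrete than what you sketch: decompose $C^*(P_l;\Q)$ into indecomposables using the Krull--Remak--Schmidt property of $\D^c$ (Remark~\ref{rem:Krull-Remak-Schmidt}), use the cohomology/amplitude classification of molecules from Theorem~\ref{thm:J} and Remark~\ref{rem:classification} to locate which shifts $\Sigma^{-k(d-1)}Z_k$ must occur, bound $k$ from below by a degree count (this is why the auxiliary parameter $m$ in the construction is taken much larger than $ld$), and then apply Proposition~\ref{prop:Z}. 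Also note the paper does not need a summand isomorphic to $\Sigma^?Z_{l-1}$ on the nose, as you aim for; it suffices that some molecule $\Sigma^{-k(d-1)}Z_k$ with $k\ge l-1$ occurs, and the degree bookkeeping only delivers that weaker conclusion. Your outline, as written, does not close the lower bound; to make it work you would either need to justify relative $\Q$-formalizability of the specific tower (doubtful) or switch to the paper's direct molecule-decomposition route, and in either case you should quote Proposition~\ref{prop:Z} rather than attempt to reprove it.
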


The map $P_l \to S^d$ in the statement above is constructed iteratively by spherical fibrations, 
as in Proposition \ref{prop:pile}.  

\medskip
Proposition \ref{prop:pile} also clarifies 
a link between the level of a rational space 
$X$ and the codimension of $X$ due to
Greenlees, Hess and Shamir \cite{G-H-S}. 

\begin{defn}\cite[7.4(i)]{G-H-S} A space $X$ is spherically complete 
intersection (sci) if it is simply-connected and 
there exists a sequence of spherical fibrations 
\begin{eqnarray*}
S^{m_1} \longrightarrow X_1 {\longrightarrow} KV,  & S^{m_2}  \longrightarrow X_2 {\longrightarrow} Y_1,  \ ..., \ 
S^{m_c} \longrightarrow X_c {\longrightarrow} X_{c-1}  & 
\end{eqnarray*}
in which $X_c = X$ and $KV$ is a regular space, namely  
the Eilenberg-MacLane space on a finite dimensional graded vector space 
$V$ with $V^{odd}=0$. 
The least such integer $c$ is called the {\it codimension of} $X$, denoted 
$\text{codim}(X)$. 
\end{defn}
 
The result \cite[Lemma 8.1]{G-H-S} asserts that 
the spheres which appear in the definition of a sci space 
may be taken to be of odd dimension by replacing the regular space $KV$ 
by another regular space. Thus if $X$ is sci, by composing the projections in 
the fibrations, we have a new fibration  
$F \to X \stackrel{\pi}{\to} KV$ such that   
$$
\text{codim}(X)= \dim\pi_*(F)\otimes \Q= \dim \pi_{odd}(X)\otimes \Q.
$$
We call this fibration a standard fibration of $X$. 
Proposition \ref{prop:pile} yields immediately the following result.

\begin{thm}\label{thm:sci}
Let $X$ be sci with a standard fibration of the form $F \to X \to KV$.   
Then one has 
$$
\text{\em level}_{KV}(X)_{\Q} \leq \text{\em codim}(X)+1.
$$
\end{thm}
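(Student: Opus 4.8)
The plan is to deduce Theorem~\ref{thm:sci} directly from Proposition~\ref{prop:pile} by unwinding the definition of a standard fibration. First I would recall that, by hypothesis, $X$ is sci with a standard fibration $F \to X \stackrel{\pi}{\to} KV$, and that, as explained just before the statement, this fibration is obtained by composing the projections in a tower of spherical fibrations $S^{m_i} \to X_i \to X_{i-1}$ (with $X_0 = KV$, $X_c = X$) in which, after the normalization from \cite[Lemma 8.1]{G-H-S}, each sphere $S^{m_i}$ may be taken to be of odd dimension, say $m_i = 2m'_i + 1$ with $m'_i \geq 1$, and the base $KV$ is a regular Eilenberg--MacLane space on a finite-dimensional graded vector space $V$ with $V^{odd} = 0$. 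Here $c = \operatorname{codim}(X) = \dim \pi_{odd}(X) \otimes \Q$.

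Next I would match this tower against the hypotheses of Proposition~\ref{prop:pile}. That proposition requires the base of the first fibration to be of the form $B \times \big(\times_{i=1}^{s} S^{2n_i+1}\big)$ with $B$ simply-connected and all $n_i \geq 1$, and then builds a tower of $c$ further odd-spherical fibrations on top of it, concluding $\operatorname{level}_B(Y_c)_{\Q} \leq c+1$. In the sci situation the base of the bottom fibration is simply $KV$ with no product of spheres present; this is exactly the case $s = 0$ of Proposition~\ref{prop:pile}, with $B = KV$. Since $V^{odd} = 0$, the space $KV$ is a product of Eilenberg--MacLane spaces of the form $K(\Q, 2k)$, hence simply-connected, so it is a legitimate choice of $B$. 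Thus the tower $S^{2m'_1+1} \to X_1 \to KV$, $S^{2m'_2+1} \to X_2 \to X_1$, \dots, $S^{2m'_c+1} \to X_c \to X_{c-1}$ is precisely an instance of the sequence of fibrations in Proposition~\ref{prop:pile} with $c$ stages, base $KV$, and the space $X_c = X$ regarded as an object of $\mathcal{TOP}_{KV}$ via the composite projection $\pi = \pi_0 \circ \pi_1 \circ \cdots \circ \pi_c$ --- and this composite is exactly the standard fibration $\pi : X \to KV$. Applying Proposition~\ref{prop:pile} gives $\operatorname{level}_{KV}(X)_{\Q} \leq c + 1 = \operatorname{codim}(X) + 1$, as claimed.

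The only genuine point requiring care is checking that the normalized sci tower really does meet the precise formal hypotheses of Proposition~\ref{prop:pile}: that the $s=0$ degenerate case is allowed (so that the base may be taken to be $KV$ with no spherical factors), that all the sphere dimensions $m_i$ occurring after the \cite[Lemma 8.1]{G-H-S} reduction are odd and of dimension $\geq 3$ so that we may write $m_i = 2m'_i+1$ with $m'_i \geq 1$, and that the resulting composite projection agrees, as a map over $KV$, with the standard fibration appearing in the statement. I expect this bookkeeping --- in particular confirming that the lower-dimension constraint $m_i \geq 3$ is harmless, e.g. because an $S^1$-fibration over a simply-connected base with simply-connected total space cannot occur, or can be absorbed into the regular base --- to be the main (and essentially only) obstacle; once it is dispatched, the theorem is an immediate corollary of Proposition~\ref{prop:pile}.
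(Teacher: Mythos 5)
Your proposal is correct and is exactly the paper's argument: the paper offers no separate proof of Theorem \ref{thm:sci} beyond the remark that it "yields immediately" from Proposition \ref{prop:pile}, applied to the normalized odd-spherical tower over the regular base $KV$ (the $s=0$ case of the proposition). Your extra bookkeeping about the degenerate base and the odd sphere dimensions is a reasonable elaboration of what the paper leaves implicit.
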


We next focus on the problem of realizing objects in
the triangulated category $\D(C^*(S^d; \K))$ 
as the singular cochain complexes of spaces. 
To this end, we describe J{\o}rgensen's result in \cite{J}  briefly.

Let $\T$ be a triangulated category. An object in $\T$ is said to be 
indecomposable if it is not a coproduct
of nontrivial objects. 
Recall that a triangle 
$$
L \stackrel{u}{\to} M  \stackrel{v}{\to} 
N  \stackrel{w}{\to} \Sigma L
$$ 
in $\T$ is an {\it  Auslander-Reiten triangle}  \cite{H}\cite{H2}   
if the following conditions are satisfied: \\

(i) $L$ and $N$ are indecomposable. 

(ii) $w\neq 0$.  

(iii) Each morphism $N' \to N$ which is not a retraction factors through
$v$. \\

 
We say that a morphism $f : M \to N$ in $\T$ is {\it irreducible} if it is
neither a section nor a retraction, but satisfies that in any
factorization $f=rs$, either $s$ is a section or $r$ is a retraction. 

The category $\T$ is said to have Auslander-Reiten triangles 
if, for each object $N$ whose endomorphism ring is local, there exists an 
Auslander-Reiten triangle with $N$ as the third term from the left. 
Recall also that an object $K$ in $\T$ is {\it compact} 
if the functor $\text{Hom}_{\T}(K, \ )$ preserves coproducts; see
\cite[Chapter 4]{N}.

\begin{defn}
The Auslander-Reiten quiver of $\T$ has as vertices the isomorphism
 classes $[M]$ of indecomposable objects. It has one arrow from 
$[M]$ to $[N]$ when there is an irreducible morphism $M \to N$
and no arrow from $[M]$ to $[N]$ otherwise. 
\end{defn}

Let $A$ be a locally finite, simply-connected DG algebra over a field $\K$. 
Assume further that $\dim H^*(A) < \infty$. 
We denote by $\D^c(A)$ the full subcategory of 
the derived category $\D(A)$ consisting of the compact objects.  
For a DG $A$-module $M$, let $DM$ be the dual 
$\text{Hom}_\K(M, \K)$ to $M$.

Put $d:= \sup \{i \ | \ H^iA \neq 0 \}$.  
One of the main results in \cite{J} asserts that both $\D^c(A)$ and 
$\D^c(A^{op})$ have Auslander-Reiten triangles if and only if there are
isomorphisms of graded $H^*A$-modules $_{H^*A}(DH^*A)\cong {}_{H^*A}(\Sigma^dH^*A)$
and $(DH^*A)_{H^*A}\cong \ (\Sigma^dH^*A)_{H^*A}$; that is,  $H^*(A)$ is a
Poincar\'e duality algebra.  In other words, 
$A$ is Gorenstein in the sense of F\'elix, Halperin and Thomas 
\cite{FHT_G}. In this case, the form 
of the Auslander-Reiten quiver of $\D^c(A)$ 
was determined in \cite{J} and \cite{J2}. 

The key lemma \cite[Lemma 8.4]{J} for 
proving results in \cite[Section 8]{J} is obtained 
by using the rational formality of the spheres. 
Since the spheres are also $\K$-formal for
any field $\K$, the assumption concerning the characteristic 
of the underlying field is unnecessary for all the results 
in \cite[Section 8]{J}; see \cite{J3} and \cite{S}. 
In particular, we have

\begin{thm}\cite[Theorem 8.13]{J}\cite[Proposition 8.10]{J} 
\label{thm:J} Let $S^d$ be the $d$-dimensional sphere with $d > 1$ and 
$\K$ an arbitrary field. Then  
the Auslander-Reiten quiver of the category $\text{\em D}^c(C^*(S^d; \K))$
 consists of $d-1$ components, each isomorphic to 
the translation quiver ${\mathbb Z}A_\infty$; see \cite[5.6]{H}. 
The component containing $Z_0\cong C^*(S^d;\K)$ is of the form
$$
\objectmargin={0.5pt}
\xymatrix@C15pt@R15pt{
&  \vdots & {}\ar@{}[rd]^(1.2){Z_3} & \vdots & & \vdots & & \vdots \\
\cdots & \circ \ar[rd] &   & \circ \ar[rd]^(1.3){Z_2} & & \circ
                       \ar[rd]
        & & \circ  \ar[rd]^(1.3){\Sigma^{-2(d-1)}Z_2} & & \cdots \\
&   &\circ\ar[rd]\ar[ru]  &   & \circ \ar[rd]^(1.3){Z_1} \ar[ru] & 
               & \circ\ar[rd]^(1.3){\Sigma^{-(d-1)}Z_1} \ar[ru] & & \circ&
   & & \cdots \\
& \circ\ar[rd]\ar[ru]     &  & \circ \ar[rd]\ar[ru]&  & \circ\ar[rd]_(0.82){\Sigma^{d-1}Z_0}\ar[ru]&  & \circ
 \ar[rd]^(1.5){\Sigma^{-(d-1)}Z_0}\ar[ru] & & \\ 
& \!\!\!\! \cdots  &\circ \ar[ru] &   & \circ \ar[ur] & & \circ\ar[ru]_(0.08){Z_0}
 & & \circ  & & & \cdots 
}
$$
Moreover, the cohomology of the indecomposable object $\Sigma^{-l}Z_m$
 has the form  
$$
H^i(\Sigma^{-l}Z_m)\cong 
\left\{
\begin{array}{l}
\K  \ \ \text{for} \ i = -m(d-1)+l \ \text{and} \ d+l,  \\
0  \ \ \text{otherwise}.
\end{array}
\right.
$$ 
\end{thm}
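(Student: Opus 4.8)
The plan is to deduce this from Jørgensen's paper \cite{J} by checking that its hypotheses survive the passage from $\Q$ to an arbitrary field $\K$. The first point is structural: since $S^d$ is simply-connected with $d>1$ and its cohomology $H^*(S^d;\K) = \K[x]/(x^2)$ with $|x|=d$ is a Poincar\'e duality algebra over any field, the cochain algebra $A = C^*(S^d;\K)$ is locally finite, simply-connected, with $\dim H^*(A)<\infty$ and $H^*(A)$ Poincar\'e duality. Hence, by the characterization recalled just above the statement, both $\D^c(A)$ and $\D^c(A^{op})$ have Auslander-Reiten triangles, so the general shape results of \cite[Section 8]{J} apply in principle; what remains is to verify that the one place where \cite{J} invokes rational coefficients---the key lemma \cite[Lemma 8.4]{J}---goes through $\K$-formally.

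The second and main step is exactly that verification. The plan is to recall that \cite[Lemma 8.4]{J} is proved by exploiting the formality of $S^d$ over $\Q$: one replaces $C^*(S^d;\Q)$ by the formal model $H^*(S^d;\Q)$ and computes in the (much simpler) derived category of the graded algebra $\K[x]/(x^2)$. Since the spheres $S^d$ with $d>1$ are $\K$-formal for \emph{any} field $\K$ \cite{E}\cite{B-T} --- i.e.\ there is a zig-zag of quasi-isomorphisms of DG algebras connecting $C^*(S^d;\K)$ to $H^*(S^d;\K)$ --- this substitution argument is available verbatim over $\K$, and the resulting equivalence of triangulated categories $\D^c(C^*(S^d;\K))\simeq \D^c(H^*(S^d;\K))$ identifies the Auslander-Reiten theory on both sides. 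Thus the statement, shape, and positioning of the AR quiver $(d-1$ copies of $\mathbb{Z}A_\infty$, the placement of $Z_0 \cong C^*(S^d;\K))$ carry over word for word from \cite[Theorem 8.13]{J}\cite[Proposition 8.10]{J}; one simply cites those results with $\Q$ replaced by $\K$, noting the characteristic hypothesis there was only needed for Lemma 8.4. I expect this formality-transfer step to be the main obstacle, in the sense that it requires checking that \emph{no other} appeal to $\Q$ is hidden in \cite[Section 8]{J}; here one can lean on the remark already made in the excerpt, and on \cite{J3}\cite{S}, which carry out precisely this generalization.

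The final step is the cohomology computation for the indecomposable $\Sigma^{-l}Z_m$. The plan is to identify $Z_m$ explicitly inside the component: $Z_0 = C^*(S^d;\K)$, and the higher $Z_m$ are built by the AR translation, which over the formal model $\K[x]/(x^2)$ corresponds to an explicit shift. Concretely, under the equivalence $\D^c(C^*(S^d;\K))\simeq \D^c(\K[x]/(x^2))$ one can take $Z_m$ to be (a shift of) a "truncated" module whose cohomology is $\K$ concentrated in two degrees differing by $d$, and the AR-translate introduces the $(d-1)$-fold shift seen in the labels $\Sigma^{-(d-1)}Z_1$, etc.; reading off $H^i(\Sigma^{-l}Z_m)$ then amounts to bookkeeping with the two nonzero cohomology degrees of $Z_m$ and the shifts $\Sigma^{-l}$ and the internal translation by $m(d-1)$. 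This gives $H^i(\Sigma^{-l}Z_m)=\K$ for $i=-m(d-1)+l$ and $i=d+l$, and $0$ otherwise, which is the stated formula; this part is routine once the identification of $Z_m$ via formality is in hand, so I would present it briefly and refer to the relevant computations in \cite{J}.
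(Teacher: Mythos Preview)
Your proposal is correct and matches the paper's approach exactly: the paper does not give an independent proof of this theorem but simply observes, in the paragraph immediately preceding the statement, that the only place \cite[Section 8]{J} uses $\Q$ is the formality argument in \cite[Lemma 8.4]{J}, and that since $S^d$ is $\K$-formal for any field \cite{E}\cite{B-T} all the results there (in particular \cite[Theorem 8.13, Proposition 8.10]{J}) hold over $\K$, citing \cite{J3} and \cite{S} for the generalization. Your write-up is a slightly more fleshed-out version of that same observation.
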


In what follows, we call an indecomposable object in $\D^c(C^*(X; \K))$ 
a molecule. 

\begin{rem}\label{rem:Krull-Remak-Schmidt}
Let $A$ be a DG algebra with $\dim H(A) < \infty$. Then $\D^c(A)$ is 
a Krull-Remak-Schmidt category; that is, each object decomposes uniquely
 into indecomposable objects; see \cite[Proposition 2.4]{J3}. 
\end{rem}

\begin{rem}
\label{rem:classification}
The latter half of Theorem \ref{thm:J} implies that molecules 
in $\D^c(C^*(S^d; \K))$ are characterized by their cohomology. 
Moreover, those objects are also classified by the {\it amplitude} 
of their cohomology of the objects, up to shifts. Here 
the amplitude of a DG module $M$, denoted $\text{amp}M$, is defined by 
$$
\text{amp}M:=\sup \{ i \in {\mathbb Z} \ | \ M^i \neq 0 \}-
\inf\{ i \in {\mathbb Z} \ | \ M^i \neq 0\}.
$$ 
\end{rem}

The cohomology of $\Sigma^{-(d-1)}Z_1$ is isomorphic to 
$H^*(S^{2d-1}; \K)$ as a graded vector space and that 
there is an irreducible map $\Sigma^{-(d-1)}Z_1 \to Z_0$ that  induces 
$H^*(S^d; \K)=H^*(Z_0)\to H^*(\Sigma^{-(d-1)}Z_1)=H^*(S^{2d-1}; \K)$ 
a morphism of $H^*(S^d; \K)$-modules. 
Thus one might expect that the topological 
realizability of the morphism in the quiver 
is related to the Hopf invariant 
$H : \pi_{2d-1}(S^d)\to {\mathbb Z}$. We define realizability as follows.

\begin{defn}
An object $M$ in the category 
$\D^c(C^*(X; \K))$ is {\it realizable} by an object $f: Y \to X$ in 
${\mathcal TOP}_X$ if $M$ is isomorphic  
to the cochain complex 
 $C^*(Y; \K)$ endowed with the $C^*(X; \K)$-module structure via the map 
$f^* : C^*(X; \K) \to  C^*(Y; \K)$; that is, $M\cong C^*(Y; \K)^f$ in $\D^c(C^*(X; \K))$.   
\end{defn}

We establish the following proposition.

\begin{prop} 
\label{prop:spheres}
Let $\phi : S^{2d-1} \to S^d$ be a map.   
The DG module $C^*(S^{2d-1}; \K)^\phi$ over $C^*(S^d; \K)$ is in $\DD^c(C^*(S^d; \K))$ if and only if 
$H(\phi)_{\K}$ is nonzero,  where $H(-)_{\K}$ denotes the composite of the
 Hopf invariant with the reduction ${\mathbb Z} \to {\mathbb Z}\otimes \K$. 
In that case, the induced map 
$\phi^* : C^*(S^d; \K) \to C^*(S^{2d-1}; \K)$ coincides with the
 irreducible map $Z_0 \to \Sigma^{-(d-1)}Z_1$ up to scalar multiple. 
\end{prop}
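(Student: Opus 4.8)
The plan is to analyze the cochain complex $C^*(S^{2d-1};\K)$ with the $C^*(S^d;\K)$-module structure coming from $\phi$ by comparing it with the formal models. Since $S^d$ is $\K$-formal, $C^*(S^d;\K)$ is quasi-isomorphic to $H^*(S^d;\K)=\K[x]/(x^2)$ with $|x|=d$ (as DG algebras, $x$ being a cocycle), so one may replace the ground DG algebra by this small model $A$. The first step is to compute $H^*(S^{2d-1};\K)$ as a module over $H^*(S^d;\K)$; the only possibly nonzero action is the map $x\cdot(-)\colon H^0\to H^d$, and I claim this is multiplication by $\pm H(\phi)_\K$. This is essentially the classical description of the Hopf invariant via the cup-square in the mapping cone, transported through the Eilenberg--Moore / bar construction; concretely, a semifree resolution of $C^*(S^{2d-1};\K)$ over $A$ computing $\mathrm{Tor}_A(\K,\K)$-type information sees the attaching data exactly as $H(\phi)$.

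Next I would split into the two cases. If $H(\phi)_\K\neq 0$: then $H^*(S^{2d-1};\K)\cong\Sigma^{-(d-1)}H^*(Z_1)$ as a graded $H^*(S^d;\K)$-module (both are the free-rank-one-plus-shift module realizing amplitude $d-1$ in the right internal degrees), so by Theorem~\ref{thm:J} together with Remark~\ref{rem:classification} — molecules in $\D^c(C^*(S^d;\K))$ are determined by the graded-vector-space cohomology and, up to shift, by amplitude — the object $C^*(S^{2d-1};\K)^\phi$ is a compact object, indeed isomorphic to $\Sigma^{-(d-1)}Z_1$. For this I must first know that $C^*(S^{2d-1};\K)^\phi$ actually lies in $\D^c(C^*(S^d;\K))$; this follows because with $H(\phi)_\K\neq 0$ the module $H^*(S^{2d-1};\K)$ is finitely built from $H^*(S^d;\K)$ (it sits in a single triangle $\Sigma^{-(d-1)}A\to C^*(S^{2d-1};\K)^\phi\to\text{(shift of }A)$ up to quasi-isomorphism, obtained from the minimal semifree resolution), and compactness is inherited. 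If $H(\phi)_\K=0$: then $H^*(S^{2d-1};\K)$ is the \emph{trivial} $H^*(S^d;\K)$-module $\K\oplus\Sigma^{-(2d-1)}\K$ with $x$ acting as zero. One then checks this DG module is not compact over $C^*(S^d;\K)$ — e.g. it is not finitely built from $A$, equivalently $\mathrm{level}_{\D(A)}$ is infinite — by a support/Ext-growth argument: for the trivial module, $\mathrm{Ext}^*_A(\K,\K)$-style computations give unbounded (periodic) cohomology of any would-be finite filtration, contradicting finiteness. Alternatively, one invokes that every compact object is a molecule (Theorem~\ref{thm:J} classifies all indecomposables, and by Remark~\ref{rem:Krull-Remak-Schmidt} every compact object is a finite sum of them), and no molecule has trivial $H^*(S^d;\K)$-action in amplitude $2d-1$.

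For the final sentence, assuming $H(\phi)_\K\neq 0$, I would identify the map $\phi^*\colon C^*(S^d;\K)\to C^*(S^{2d-1};\K)^\phi$, i.e. $Z_0\to\Sigma^{-(d-1)}Z_1$, with \emph{the} irreducible arrow of the quiver in Theorem~\ref{thm:J}. Since $\mathrm{Hom}_{\D^c(C^*(S^d;\K))}(Z_0,\Sigma^{-(d-1)}Z_1)$ is one-dimensional (this is computable from the $\mathbb{Z}A_\infty$ shape, or directly: $H^*\mathrm{RHom}_A(A,\Sigma^{-(d-1)}Z_1)=H^*(\Sigma^{-(d-1)}Z_1)$ which is $\K$ in degree $0$), any nonzero such map is the irreducible one up to a scalar; and $\phi^*$ is nonzero because it is an isomorphism in $H^0$. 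So it remains only to rule out $\phi^*=0$ in $\D^c$, which is immediate.

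\medskip\noindent
\textbf{Main obstacle.} The crux is Step~1: cleanly proving that the residual $H^*(S^d;\K)$-module structure on $H^*(S^{2d-1};\K)$ — more precisely the isomorphism class of $C^*(S^{2d-1};\K)^\phi$ in $\D(C^*(S^d;\K))$ — is governed \emph{exactly} by $H(\phi)_\K$, with the right normalization and sign, and that the resulting object is compact iff that invariant is nonzero. This is where the interplay between the Hopf invariant (cup product in $\mathrm{cone}(\phi)=S^d\cup_\phi e^{2d}$) and the DG-module structure must be made precise, presumably via an explicit minimal semifree resolution of $C^*(S^{2d-1};\K)$ over the formal model $\K[x]/(x^2)$; the non-compactness direction additionally needs the Ext-growth (or "no such molecule") argument to exclude finite buildability when $H(\phi)_\K=0$.
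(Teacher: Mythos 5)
Your Step~1 contains a substantive error that undermines the proposed mechanism for detecting $H(\phi)_\K$. You claim that the $H^*(S^d;\K)$-module structure on $H^*(S^{2d-1};\K)$ has a potentially nonzero action $x\cdot(-)\colon H^0\to H^d$ equal to $\pm H(\phi)_\K$. But $H^d(S^{2d-1};\K)=0$ for $d>1$, and likewise $H^{3d-1}(S^{2d-1};\K)=0$, so the action of $x$ on $H^*(S^{2d-1};\K)$ is \emph{always} trivial for degree reasons, regardless of $\phi$. You seem to be conflating $S^{2d-1}$ with the mapping cone $S^d\cup_\phi e^{2d}$, where the cup-square $x^2=H(\phi)\,y$ does live; but the statement is about the cochains of $S^{2d-1}$ itself, and there the Hopf invariant is a genuinely secondary invariant, visible only at the cochain level (e.g.\ as a Massey-type product or as an EMSS differential), never in the $H^*(S^d;\K)$-module structure on cohomology. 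This error then propagates: your proposed way to rule out compactness when $H(\phi)_\K=0$ (``no molecule has trivial $H^*(S^d;\K)$-action in amplitude $2d-1$'') cannot work, because the molecule $\Sigma^{-(d-1)}Z_1$ does have trivial $x$-action on cohomology, again for degree reasons, so the cohomology module structure cannot distinguish the two cases at all.

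The paper's proof instead goes through the homotopy fibre $F_\phi$ and Proposition~\ref{prop:compact}: $C^*(S^{2d-1};\K)^\phi$ is compact iff $\dim H^*(F_\phi;\K)<\infty$. It computes $H^*(F_\phi;\K)$ via the Eilenberg--Moore spectral sequence, whose $E_2$-term involves a divided-power algebra $\Gamma[\tau]$; Lemma~\ref{lem:E_2} identifies $d_2(\tau)=H(\phi)_\K\,x_{2d-1}$. To control \emph{all} the higher differentials $d_r(\gamma_i(\tau))$ (not just $d_2(\tau)$), the paper uses the fact that the EMSS is a DG comodule over the EMSS of the path-loop fibration on $S^d$; this comodule structure forces the first nontrivial differential to be $d_2(\gamma_1(\tau))=\alpha x_{2d-1}$, and the multiplicativity then propagates it to $d_2(\gamma_i(\tau))=H(\phi)_\K\,x_{2d-1}\gamma_{i-1}(\tau)$. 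That is the mechanism by which the Hopf invariant controls finiteness of $H^*(F_\phi;\K)$, and it is exactly the ``secondary'' ingredient your proposal lacks. Your final paragraph (one-dimensionality of $\mathrm{Hom}(Z_0,\Sigma^{-(d-1)}Z_1)$ and nonvanishing of $\phi^*$ in the derived category when $H(\phi)_\K\neq 0$) is essentially correct and matches the paper's argument for the second assertion.
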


Since the $0$th cohomology of a space is non-zero and the negative part
of the cohomology is zero, only indecomposable objects of the form $\Sigma^{-m(d-1)}Z_m$
($m\geq 0$) may be realizable; see the beginning of the proof of 
Theorem \ref{thm:realization}. 
Observe that the objects $\Sigma^{-m(d-1)}Z_m$ 
lie in the line connecting $Z_0$ and $\Sigma^{-(d-1)}Z_1$.
However, 
the following proposition states that most of molecules in 
$\D^c(C^*(X; \K))$ are {\it not} realizable by finite CW complexes.

\begin{thm}
\label{thm:realization} 
Suppose that the characteristic of the underlying field is greater than 
$2$ or zero. 
A molecule of the form $\Sigma^{-i}Z_l$  in 
$\text{\em D}^c(C^*(S^d; \K))$ is realizable by a finite CW complex 
if and only if $i=d-1$, $l=1$ and $d$ is even, 
or $i=0$ and $l=0$.  
\end{thm}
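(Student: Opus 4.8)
The plan is to split the biconditional into the easy ``if'' direction and the substantive ``only if'' direction, and to reduce the realizability question to the homology of finite CW complexes via Theorem \ref{thm:J} and Proposition \ref{prop:spheres}. First I would record the observation already flagged in the excerpt: if $C^*(Y;\K)^f \cong \Sigma^{-i}Z_l$ in $\D^c(C^*(S^d;\K))$, then the cohomology of $\Sigma^{-i}Z_l$ must be concentrated in degrees $\ge 0$ with $\K$ in degree $0$ (since $\widetilde H^{<0}(Y)=0$ and $H^0(Y)=\K$ for a connected space $Y$). Feeding in the explicit formula $H^*(\Sigma^{-l}Z_m)$ from Theorem \ref{thm:J}, the constraint $-m(d-1)+l = 0$ forces $i = l(d-1)$, so the only candidates are the molecules $\Sigma^{-l(d-1)}Z_l$, $l \ge 0$, lying on the ``bottom line'' joining $Z_0$ and $\Sigma^{-(d-1)}Z_1$. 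For such a molecule, a realizing finite CW complex $Y$ would have $\widetilde H^*(Y;\K)$ one–dimensional, concentrated in degree $d+l(d-1) = (l+1)d - l$; i.e. $Y$ has the $\K$-cohomology of a sphere $S^{n}$ with $n = (l+1)(d-1)+1$.

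For the ``if'' direction: the case $i=0$, $l=0$ is realized by $\mathrm{id}: S^d \to S^d$ since $Z_0 \cong C^*(S^d;\K)$. The case $i=d-1$, $l=1$, $d$ even is handled by Proposition \ref{prop:spheres}: when $d$ is even the Hopf invariant one element (via $\phi:S^{2d-1}\to S^d$, e.g. $d\in\{2,4,8\}$, or more generally any $\phi$ with $H(\phi)_\K \ne 0$ — and over a field of characteristic $\ne 2$ such $\phi$ exists for every even $d$ because for even $d$ the Whitehead square $[\iota_d,\iota_d]$ has Hopf invariant $\pm 2$) gives $C^*(S^{2d-1};\K)^\phi \cong \Sigma^{-(d-1)}Z_1$. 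So I would invoke Proposition \ref{prop:spheres} directly here, noting only that for even $d$ one always has a map of nonzero $\K$-Hopf invariant when $\mathrm{char}\,\K \ne 2$.

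For the ``only if'' direction, which is the crux: suppose $l \ge 2$ (the case $l\ge 1$ with $d$ odd is absorbed below). A realizing finite CW complex $Y$ has the rational/mod-$p$ cohomology of a single odd or even sphere $S^n$, $n=(l+1)(d-1)+1$, and $C^*(Y;\K) \cong \Sigma^{-l(d-1)}Z_l$ carries a nontrivial $C^*(S^d;\K)$-module structure (it is an indecomposable, non-free, non-trivial module — in particular $\mathrm{amp}\,H^*(\Sigma^{-l(d-1)}Z_l) = d + l(d-1) - 0 \ne 0$ and it is not a shift of $Z_0$, so the structure map $C^*(S^d;\K)\to C^*(Y;\K)$ is genuinely nontrivial in $\D(C^*(S^d;\K))$). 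The structure map is induced by a map $f: Y \to S^d$. I would argue that nontriviality of the module structure forces $f^*: H^d(S^d;\K) \to H^d(Y;\K)$ to be nonzero, hence (since $H^*(Y;\K)$ is that of a sphere $S^n$) forces $n = d$, i.e. $l=0$; OR, if $n \ne d$, then $f^*$ is zero on positive-degree cohomology, and then a minimal-model / formality argument (using that $S^d$ is $\K$-formal, so $C^*(S^d;\K) \simeq H^*(S^d;\K)$ as DG algebras, and $C^*(Y;\K)$ over $H^*(S^d;\K) = \K[x]/(x^2)$ with $\deg x = d$) shows the module is a $\Sigma^{-*}$ of a trivial/free module and hence decomposes or has the wrong amplitude unless $l \le 1$. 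More precisely: over $\K[x]/(x^2)$, an indecomposable compact DG module with cohomology $\K$ in two degrees differing by $d$ on which $x$ acts is exactly $\Sigma^{-(d-1)}Z_1$ (the module $\K[x]/(x^2)$ itself up to shift), and the realizing map must then be a Hopf-invariant-one map $S^{2d-1}\to S^d$; by Adams' Hopf invariant one theorem this needs $d\in\{2,4,8\}$ over $\Q$ or $\mathrm{char}\,\K=0$, but over $\mathrm{char}\,\K = p > 2$ we only need $H(\phi)_\K\ne 0$, which (again via the Whitehead square, Hopf invariant $\pm 2$) holds for all even $d$ and fails for odd $d$. The odd-$d$ obstruction is precisely that the Whitehead square has Hopf invariant $0$ and $\pi_{2d-1}(S^d)\otimes\K$ is detected by the Hopf invariant only in the even case; I would cite the standard computation of $H:\pi_{2d-1}(S^d)\to\mathbb Z$ (image $\mathbb Z$ for $d$ even, $2\mathbb Z$... — rather, for $d$ odd the Hopf invariant is zero on the whole group except possibly $d=1$, so $H(\phi)_\K = 0$ always when $d$ is odd and $\mathrm{char}\,\K\ne 2$).

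\textbf{Main obstacle.} The hard part is the ``only if'' direction for $l \ge 2$: showing no finite CW complex with the $\K$-cohomology of a sphere $S^{(l+1)(d-1)+1}$ can carry the exotic $C^*(S^d;\K)$-module structure of $\Sigma^{-l(d-1)}Z_l$. The clean way is to observe that such a structure is equivalent (using $\K$-formality of $S^d$) to a module structure over $H^*(S^d;\K)=\K[x]/(x^2)$, compute that the indecomposable non-free reflexive such modules of amplitude $d + l(d-1)$ correspond under $\mathrm{Ext}$-computations to attaching data realized by elements of $\pi_{(l+1)(d-1)+1-1}(\text{(lower skeleton)})$, and then show the only topologically attainable case with the correct minimal model is $l=1$ with a Hopf map. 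The genuinely delicate bookkeeping is (a) translating ``realizable molecule'' into ``attaching map with nonzero $\K$-Hopf-type invariant,'' and (b) the characteristic hypothesis: for $\mathrm{char}\,\K > 2$ or $0$ the relevant invariant is detected exactly when $d$ is even, which is why the statement excludes characteristic $2$ (where $Sq_1$-type phenomena and $\eta$ would produce extra realizations).
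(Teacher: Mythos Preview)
Your ``if'' direction is correct and matches the paper: $Z_0$ is realized by the identity, and for even $d$ the Whitehead square $[\iota,\iota]$ has Hopf invariant $\pm 2$, so Proposition~\ref{prop:spheres} realizes $\Sigma^{-(d-1)}Z_1$ when $\mathrm{char}\,\K\neq 2$. Your reduction to the line $\Sigma^{-l(d-1)}Z_l$ is also right.

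The ``only if'' direction, however, has a genuine gap. Your argument hinges on the dichotomy ``either $f^*$ is nonzero on $H^d$, forcing $l=0$, or $f^*=0$ and a formality argument over $\K[x]/(x^2)$ forces the module to decompose unless $l\le 1$.'' This cannot work. For every $l\ge 1$ the module $\Sigma^{-l(d-1)}Z_l$ has cohomology concentrated in degrees $0$ and $(l+1)d-l$, so multiplication by $x_d$ on its cohomology is \emph{always} zero; the indecomposability is a genuinely DG phenomenon invisible to $H^*(f)$ or to the graded $H^*(S^d;\K)$-module structure. In particular, formality of $S^d$ lets you replace the base by $\K[x]/(x^2)$, but it does \emph{not} let you replace $C^*(Y;\K)$ by $H^*(Y;\K)$ as a module, and over $\K[x]/(x^2)$ the indecomposables $\Sigma^{-l(d-1)}Z_l$ exist for all $l$ (this is exactly Theorem~\ref{thm:J}). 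So no purely algebraic argument of the kind you sketch can exclude $l\ge 2$; the obstruction must be topological.

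The paper's missing idea is this: use finiteness of the CW complex $X$ to control the \emph{rational} cohomology (integral cohomology is finitely generated, so $H^*(X;\Q)$ is either $\Q$ or $\Q\oplus\Sigma^{-(m+1)d+m}\Q$), then run the Eilenberg--Moore spectral sequence for the homotopy fibre $F_\phi$ over $\Q$. When $d$ is odd the permanent cycle $s^{-1}x_d$ makes $\dim H^*(F_\phi;\Q)=\infty$; when $d$ is even and $m>1$ the class $\tau$ survives for degree reasons ($(m+1)d-m>2d-1$), again giving $\dim H^*(F_\phi;\Q)=\infty$. A Leray--Serre argument transfers this to $\dim H^*(F_\phi;\K)=\infty$, and then Proposition~\ref{prop:compact} (the compactness criterion $\varphi(M)<\infty$) contradicts $C^*(X;\K)\in\D^c$. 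This passage through $\Q$ and the homotopy fibre is the step your proposal lacks.
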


The rest of this paper is organized as follows.  
Section 3 contains a brief introduction to semifree resolutions. We also recall 
some results on the levels which we use later on.   
Section 4 is devoted to
proving Theorems \ref{thm:main}, while   
Proposition \ref{prop:pile} and Theorem \ref{thm:ex-level}
are proved in Section 5.
In Section 6, we prove Proposition
\ref{prop:spheres} and Theorem \ref{thm:realization}. 
The explicit computations 
of levels described in Propositions \ref{prop:bundle} and
\ref{prop:variation} are made in Section 7.

We conclude this section with comments on our work.     

\begin{rem}
Let $X$ be a simply-connected space whose
cohomology with coefficients in a field $\K$ is a Poincar\'e duality algebra. 
The Auslander-Reiten quiver of $\D^c(C^*(X; \K))$ 
then graphically depicts  irreducible morphisms 
and molecules in the
full subcategory. 
Even if a molecule in $\D^c(C^*(X; \K))$ is not realizable,  
it may be needed to construct $C^*(Y; \K)$ for a space $Y$ over $X$ 
as a $C^*(X; \K)$-module. 
In fact, it follows from the proofs of Propositions \ref{prop:bundle} and
\ref{prop:variation} that some molecules are retracts of 
the $C^*(S^4; \K)$-modules $C^*(E_f; \K)$ and $C^*(E_g; \K)$,  even though 
they are not realizable; see also Example \ref{ex:ex}.  
\end{rem}  
  
\begin{rem} 
A CW complex $Z$ is built out disks, which are called cells,
by iterated attachment of them. 
It is well-known that the dual to the cellular chain complex of a CW complex 
$Z$ is quasi-isomorphic to the singular cochain complex $C^*(Z; \K)$. 
Thus $C^*(Z; \K)$ is also regarded as 
`a set of cells' and hence it seems a creature in some sense. 
When we describe images by the functor $C^*(- ; \K)$ in terms of representation theory,  
we may need objects in $\D^c(C^*(X; \K))$ which are not necessarily realizable.  
Therefore one might regard such an object as structurally smaller
than a cell. This is the reason why we give indecomposable objects in
$\D^c(C^*(X; \K))$ the name `molecules'. 
\end{rem}

\section{Semifree  resolutions and the levels}

We begin by recalling the definition of the semifree resolution. 

Let $A$ be  a DG algebra over $\K$. 
\begin{defn} \cite[4.1]{ABIM}\cite{FHT_G}\cite[\S 6]{F-H-T} 
A  {\it  semifree filtration} of a DG $A$-module $M$ is a family $\{F^n\}_{n\in {\mathbb Z}}$ of
DG submodules of $M$ satisfying the condition:
$F^{-1}=0$, $F^n \subset F^{n+1}$, $\cup_{n\geq 0}F^n =M$ and 
$F^n/F^{n-1}$ is isomorphic to a direct sum of shifts of $A$. 
A DG $A$-module $M$ admitting a semifree filtration is called {\it semifree}. 
We say that the filtration $\{F^n\}_{n\in {\mathbb Z}}$ has 
{\it class at  most} $l$
if $F^l=M$ for some integer $l$. 
Moreover  $\{F^n\}_{n\in {\mathbb Z}}$ is called {\it finite} if each subquotient is finitely generated.   
\end{defn}

Let $M$ be a DG $A$-module. We say that a quasi-isomorphism  of $A$-modules 
$F \stackrel{\simeq}{\to} M$ is 
a {\it  semifree resolution} of $M$ if $F$ is semifree. For example, the bar resolution 
$B(M;A;A)$ of $M$ is $A$-semifree, and its canonical 
augmentation $\e : B(M;A;A) \stackrel{\simeq}{\to} M$ is therefore a semifree resolution of $M$. 

Let $N$ be a left DG $A$-module. We observe that the
left derived functor $- \otimes^\text{L}_AN$ is defined by  
$M\otimes^\text{L}_AN:=F\otimes_AN$ for any right DG module $M$ over
$A$, where $F \stackrel{\simeq}{\to} M$ 
is a semifree resolution of $M$.  
We see that by definition $H^*(M\otimes_A^LN)$ is exactly $\text{Tor}_A(M, N)$. 
 
The following result is useful for computing the $A$-level of an object in $\D(A)$. 

\begin{thm}\cite[Theorem 4.2]{ABIM}
\label{thm:ABIM-I}
Let $M$ be a DG module over a DG algebra $A$ and $l$ a non-negative
integer. Then 
$\text{\em level}_{\text{\em D}(A)}^A(M)\leq l$ if and only if 
$M$ is a retract in $\text{\em D}(A)$ of some DG module admitting a finite
semifree filtration of class at most $l-1$. 
\end{thm}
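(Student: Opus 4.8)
The plan is to prove, by induction on $l$, the sharper statement that the thickening $\text{{\tt thick}}^l_{\D(A)}(A)$ is exactly the class $\mathcal{R}_l$ of objects of $\D(A)$ that occur as a retract of some DG $A$-module carrying a finite semifree filtration of class at most $l-1$; the theorem then follows at once, since $\text{level}^A_{\D(A)}(M)=\inf\{n\mid M\in\text{{\tt thick}}^n_{\D(A)}(A)\}$. The cases $l=0$ and $l=1$ form the base: a finite semifree filtration of class at most $-1$ forces $M=F^{-1}=0$, so $\mathcal{R}_0=\{0\}=\text{{\tt thick}}^0_{\D(A)}(A)$, while one of class at most $0$ means $M$ is a finite direct sum of shifts of $A$, so $\mathcal{R}_1$ is the closure of $\{A\}$ under finite coproducts, shifts and retracts, that is, $\text{{\tt thick}}^1_{\D(A)}(A)$.

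For the inclusion $\mathcal{R}_l\subseteq\text{{\tt thick}}^l_{\D(A)}(A)$, which yields the ``if'' half of the theorem, I would take a DG module $N$ with a finite semifree filtration $0=F^{-1}\subset F^0\subset\cdots\subset F^{l-1}=N$ and show by a secondary induction that $F^i\in\text{{\tt thick}}^{i+1}_{\D(A)}(A)$. The key point is that the short exact sequence $0\to F^{i-1}\to F^i\to F^i/F^{i-1}\to 0$ of DG modules is split over $A$ in each degree, since $F^i/F^{i-1}$ is a direct sum of shifts of the free module $A$; hence it induces an exact triangle $F^{i-1}\to F^i\to F^i/F^{i-1}\to\Sigma F^{i-1}$ in $\D(A)$ with $F^{i-1}\in\text{{\tt thick}}^i_{\D(A)}(A)$ (secondary hypothesis) and $F^i/F^{i-1}\in\text{{\tt thick}}^1_{\D(A)}(A)$ (a finite direct sum of shifts of $A$), so $F^i\in\text{{\tt thick}}^{i+1}_{\D(A)}(A)$ by the very definition of the thickening. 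Then $N=F^{l-1}\in\text{{\tt thick}}^l_{\D(A)}(A)$, and closure of thickenings under retracts upgrades this to $\mathcal{R}_l\subseteq\text{{\tt thick}}^l_{\D(A)}(A)$.

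The reverse inclusion $\text{{\tt thick}}^l_{\D(A)}(A)\subseteq\mathcal{R}_l$ carries the real content. Since $\text{{\tt thick}}^l_{\D(A)}(A)$ is the smallest strict full subcategory of $\D(A)$ closed under retracts that contains every object $N$ admitting an exact triangle $N_1\to N\to N_2\xrightarrow{w}\Sigma N_1$ with $N_1\in\text{{\tt thick}}^{l-1}_{\D(A)}(A)$ and $N_2\in\text{{\tt thick}}^1_{\D(A)}(A)$, and since $\mathcal{R}_l$ is visibly a strict full subcategory closed under retracts, I only need to place each such $N$ in $\mathcal{R}_l$. By the inductive hypothesis $N_1$ is a retract, with data $i_1\colon N_1\to\widetilde N_1$ and $p_1\colon\widetilde N_1\to N_1$, of some $\widetilde N_1$ with a finite semifree filtration of class at most $l-2$, and $N_2$ is a retract, with data $i_2,p_2$, of a finite direct sum of shifts $\widetilde N_2$. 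I would then set $\widetilde w:=(\Sigma i_1)\circ w\circ p_2\colon\widetilde N_2\to\Sigma\widetilde N_1$, use semifreeness of $\widetilde N_2$ (so that $\text{H}(A)(\widetilde N_2,\Sigma\widetilde N_1)\to\D(A)(\widetilde N_2,\Sigma\widetilde N_1)$ is bijective) to realize $\widetilde w$ by an honest chain map $\widetilde w=\Sigma g$ with $g\colon\Sigma^{-1}\widetilde N_2\to\widetilde N_1$, and take $\widetilde N$ to be the mapping cone of $g$. Then $\widetilde N_1$ is a DG submodule of $\widetilde N$ with quotient $\widetilde N_2$, so appending the single further step $\widetilde N$ at the top of the filtration of $\widetilde N_1$ exhibits a finite semifree filtration of $\widetilde N$ of class at most $l-1$; thus $\widetilde N\in\mathcal{R}_l$, and $\widetilde N$ sits in the triangle $\widetilde N_1\to\widetilde N\to\widetilde N_2\xrightarrow{\widetilde w}\Sigma\widetilde N_1$.

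The step I expect to be the main obstacle is the last one, showing that $N$ is itself a retract of $\widetilde N$, the difficulty being that fill-in morphisms in a triangulated category are not unique. The retraction data assemble, via the axioms of a triangulated category, into a morphism of triangles $\phi$ from $N_1\to N\to N_2$ to $\widetilde N_1\to\widetilde N\to\widetilde N_2$ over $(i_1,i_2)$ and a morphism $\psi$ in the opposite direction over $(p_1,p_2)$ — the needed compatibility with $w$ and $\widetilde w$ being forced by $p_2i_2=1$, $p_1i_1=1$ and the definition of $\widetilde w$. Then $\psi\phi\colon N\to N$ is the middle component of a morphism of triangles whose outer components are $p_1i_1=1_{N_1}$ and $p_2i_2=1_{N_2}$, so $g':=\psi\phi-1_N$ is the middle component of one covering $(0,0)$; writing $N_1\xrightarrow{a}N\xrightarrow{b}N_2$, this gives $g'a=0$ and $bg'=0$, hence $g'=hb$ for some $h\colon N_2\to N$ by exactness of $\text{Hom}(-,N)$ on the triangle, hence $(g')^2=h(bg')=0$. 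Therefore $\psi\phi=1_N+g'$ is invertible (with inverse $1_N-g'$), so $\phi$ is a split monomorphism, $N$ is a retract of $\widetilde N$, and $N\in\mathcal{R}_l$. This completes the induction and hence the proof.
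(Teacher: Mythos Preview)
The paper does not give its own proof of this statement: Theorem~\ref{thm:ABIM-I} is quoted from \cite[Theorem~4.2]{ABIM} and used as a black box, so there is nothing in the paper to compare your argument against. That said, your proof is correct and is essentially the standard argument one finds in the cited source---the two directions via the obvious filtration-to-triangle induction and, conversely, the cone construction on a lift of $w$ through the retraction data, followed by the nilpotence trick $(\psi\phi-1_N)^2=0$ to force the fill-in to be a split monomorphism. One small remark: the degreewise splitting you invoke for $0\to F^{i-1}\to F^i\to F^i/F^{i-1}\to 0$ is not strictly needed to obtain a triangle in $\D(A)$, since over a field every short exact sequence of DG modules yields a distinguished triangle (the cone of the inclusion maps quasi-isomorphically to the quotient), but your justification is certainly valid.
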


In order to study Auslander-Reiten triangles, in \cite{J2}, 
J{\o}rgensen introduced  
the function $\varphi : \D(A) \to {\mathbb Z}\cup \{\infty\}$  
defined by 
$$
\varphi (M):=\dim H^*(M\otimes_A^\text{L}\K). 
$$
This yields a criterion for a given object in $\D(A)$ to be compact.

\begin{prop}\cite[Theorem 4.8]{ABIM}\cite[Proposition 2.3]{F-J}\cite[Theorem 5.3]{Keller} 
\label{prop:compact} Let $A$ be a simply-connected DG algebra.  
An object $M$ in $\text{\em D}(A)$ is compact if and only if $\varphi (M) < \infty$.  
In that case $\text{\em level}_{\DD(A)}^A(M) < \infty$. 

In particular, 
for a map $\phi : Y \to X$ from a connected space $Y$ to a simply-connected space
 $X$,  if the total dimension of the cohomology of the homotopy fibre of the map 
$\phi$ is finite,  then 
$C^*(Y; \K)$ is in ${\DD}^c(C^*(X; \K))$ and hence 
$\text{\em level}_{X}(Y) < \infty$. 
\end{prop}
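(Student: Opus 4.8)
The plan is to prove the three assertions in turn, leaning on the structural results about semifree resolutions recalled above and on Theorem \ref{thm:ABIM-I}. For the first equivalence, I would argue as follows. If $M$ is compact, then $M$ lies in $\text{{\tt thick}}_{\D(A)}(A)$, so $M$ is a retract of a DG module built from finitely many shifted copies of $A$ by finitely many exact triangles; applying the (exact, coproduct-preserving) functor $-\otimes_A^\text{L}\K$ and using that $H^*(A\otimes_A^\text{L}\K)=\K$ is finite-dimensional, an induction on the number of triangles shows $\varphi(M)=\dim H^*(M\otimes_A^\text{L}\K)<\infty$. Conversely, suppose $\varphi(M)<\infty$. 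Since $A$ is simply-connected (hence $H^0(A)=\K$, $H^1(A)=0$ and $A$ non-negative), one can build a \emph{minimal} semifree resolution $F\stackrel{\simeq}{\to}M$: choosing a semifree filtration whose successive quotients $F^n/F^{n-1}$ are free on a basis in one-to-one correspondence with a $\K$-basis of $H^n(M\otimes_A^\text{L}\K)$ (equivalently $\operatorname{Tor}_A^*(M,\K)$), minimality forces the number of semifree generators to equal $\dim\operatorname{Tor}_A(M,\K)=\varphi(M)$. Finiteness of $\varphi(M)$ then makes $F$ a \emph{finite} semifree module with a semifree filtration of finite class, and a finite semifree module is a finite iterated extension of shifts of $A$, hence compact; since $M\simeq F$, $M$ is compact.

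For the second assertion, once $M$ is compact we have produced above a finite semifree resolution $F\stackrel{\simeq}{\to}M$ with a semifree filtration of some finite class, say $l$. Then $M$ is isomorphic in $\D(A)$ to a DG module admitting a finite semifree filtration of class at most $l$, so Theorem \ref{thm:ABIM-I} gives $\text{level}_{\D(A)}^A(M)\le l+1<\infty$. (One does not need the optimal value of $l$ here, only its finiteness.)

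For the topological consequence, let $\phi:Y\to X$ with $X$ simply-connected, and let $F_\phi$ be the homotopy fibre with $\dim H^*(F_\phi;\K)<\infty$. Take $A=C^*(X;\K)$, which is simply-connected since $X$ is, and $M=C^*(Y;\K)^\phi$. The Eilenberg--Moore spectral sequence (or directly the identification $C^*(Y;\K)\otimes_{C^*(X;\K)}^\text{L}\K\simeq C^*(F_\phi;\K)$ via a semifree model of $C^*(Y;\K)$ over $C^*(X;\K)$) gives $H^*(M\otimes_A^\text{L}\K)\cong H^*(F_\phi;\K)$, so $\varphi(M)=\dim H^*(F_\phi;\K)<\infty$. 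Applying the first two parts, $C^*(Y;\K)^\phi$ is compact in $\D^c(C^*(X;\K))$ and $\text{level}_X(Y)=\text{level}_{\D(C^*(X;\K))}^{C^*(X;\K)}(C^*(Y;\K)^\phi)<\infty$.

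The main obstacle is the converse direction of the first equivalence: one must genuinely use simple-connectivity of $A$ to control a \emph{minimal} semifree resolution, so that $\varphi(M)<\infty$ forces the resolution to have finitely many generators and finite class. Without minimality a finite-dimensional $\operatorname{Tor}_A(M,\K)$ does not obviously bound the size of an arbitrary semifree resolution; the degreewise finiteness of $A$ and the non-negativity/simple-connectivity hypotheses are exactly what make the standard Eilenberg--Moore / minimal-model construction converge and stay finite. The other steps are essentially bookkeeping with exact triangles and the behaviour of $-\otimes_A^\text{L}\K$, together with a citation of Theorem \ref{thm:ABIM-I}.
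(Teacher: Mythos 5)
Your proof is essentially correct, but it is worth noting how little of it the paper itself supplies: the algebraic equivalence and the finiteness of the level are simply cited from \cite[Theorem 4.8]{ABIM}, \cite[Proposition 2.3]{F-J} and \cite[Theorem 5.3]{Keller}, while the topological corollary is disposed of in Remark \ref{rem:fibre} by quoting the Eilenberg--Moore isomorphism $H^*(F_\phi;\K)\cong H^*(C^*(Y;\K)\otimes^{\mathrm L}_{C^*(X;\K)}\K)$ (Smith, F\'elix--Halperin--Thomas). Your treatment of that last step is exactly the paper's. For the algebraic part you reconstruct the proof rather than cite it, and the reconstruction is sound: the direction ``compact $\Rightarrow\varphi(M)<\infty$'' by pushing triangles through the exact functor $-\otimes_A^{\mathrm L}\K$ is standard, and the converse correctly hinges on the existence of a \emph{minimal} semifree (Eilenberg--Moore) resolution over a simply-connected DG algebra, where the generating space is $\operatorname{Tor}_A(M,\K)$; you rightly flag that this is where simple-connectivity is genuinely used, both to make the construction converge and to keep the generating set finite when $\varphi(M)<\infty$.

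One small simplification you might observe for the ``in that case $\mathrm{level}<\infty$'' assertion: once you know compactness is equivalent to membership in $\text{\tt thick}_{\D(A)}(A)=\bigcup_n\text{\tt thick}^n_{\D(A)}(A)$ (a fact the paper records right after introducing the thickenings), finiteness of the level is immediate from the definition, with no need to invoke Theorem \ref{thm:ABIM-I} or to estimate the class of the resolution. Your route through Theorem \ref{thm:ABIM-I} is correct, just slightly longer than necessary.
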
 

\begin{rem}
\label{rem:fibre}
Let $F_\phi$ be the homotopy fibre of a map $\phi : Y \to X$. 
The latter half of Proposition  \ref{prop:compact} follows from the fact that 
$H^*(F_\phi; \K)\cong \text{Tor}_{C^*(X; \K)}(C^*(Y; \K), \K)\cong H^*(C^*(Y; \K)\otimes_{C^*(X; \K)}^\text{L}\K)$ as a graded vector space; see \cite{Smith}\cite[Theorem 7.5]{F-H-T}. 
\end{rem}

We conclude this section with a result due to Schmidt, about the levels of  
molecules in $\D^c(C^*(S^d; \K))$, which is used in the proof of Theorem \ref{thm:ex-level}. 
 
 \begin{prop} \cite[Proposition 6.6]{S}
\label{prop:Z} \ Let $Z_i$ be the molecule in  $\text{\em D}^c(C^*(S^d; \K))$
 described in Theorem \ref{thm:J}. Then  
$\text{\em level}_{\text{\em D}(C^*(S^d; \K))}(Z_i)= i+1$. 
\end{prop}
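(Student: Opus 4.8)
The plan is to prove $\text{level}_{\text{D}(C^*(S^d;\K))}(Z_i) = i+1$ by establishing the two inequalities separately, using Theorem \ref{thm:ABIM-I} for the upper bound and the homological characterization of molecules from Theorem \ref{thm:J} together with the function $\varphi$ for the lower bound.

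\medskip
\noindent
\textbf{Upper bound.} First I would construct, by induction on $i$, an explicit DG $C^*(S^d;\K)$-module $F_i$ with a finite semifree filtration of class at most $i$ such that $F_i \simeq Z_i$ in $\text{D}(C^*(S^d;\K))$; by Theorem \ref{thm:ABIM-I} this gives $\text{level}(Z_i) \le i+1$. The base case is $F_0 = C^*(S^d;\K)$ itself (class $0$, realizing $Z_0$). For the inductive step I would use the Auslander-Reiten / almost-split structure recorded in Theorem \ref{thm:J}: the component of the quiver is $\mathbb{Z}A_\infty$, and walking up one row corresponds to forming a mapping cone of a map between objects of the previous row and $Z_0$. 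Concretely, $Z_{i+1}$ sits in an exact triangle built from $Z_i$ (or a shift of it) and a shift of $Z_0$, so a semifree model $F_{i+1}$ is obtained by attaching a copy of a shift of $C^*(S^d;\K)$ to $F_i$ along a cocycle; this raises the filtration class by exactly one. One must check the attaching map is chosen so that the cohomology of the cone matches the prescribed cohomology of $Z_{i+1}$ in Theorem \ref{thm:J} — this is a finite, low-dimensional computation since each $H^*(\Sigma^{-l}Z_m)$ is concentrated in two degrees.

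\medskip
\noindent
\textbf{Lower bound.} For $\text{level}(Z_i) \ge i+1$, the natural tool is $\varphi(M) = \dim H^*(M \otimes^{\text{L}}_{C^*(S^d;\K)} \K)$. By Remark \ref{rem:fibre} this is the total dimension of $\text{Tor}_{C^*(S^d;\K)}(Z_i,\K)$, i.e.\ the cohomology of the ``homotopy fibre'' of $Z_i$ over $S^d$. I would show $\varphi(Z_i)$ grows with $i$ — indeed, since $S^d$ is $\K$-formal with $H^*(S^d;\K)$ an exterior (for $d$ odd) or truncated polynomial (for $d$ even) algebra on one generator, one can compute $\text{Tor}_{C^*(S^d;\K)}(Z_i,\K) \cong \text{Tor}_{H^*(S^d;\K)}(H^*(Z_i),\K)$ via a Koszul-type resolution, and the two-dimensional cohomology of $Z_i$ in degrees spread out by $i(d-1)$ forces $\dim \text{Tor}$ to be at least of order $i$. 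Then I would invoke the standard subadditivity/growth estimate for levels: building an object in $n$ steps from $C^*(S^d;\K)$ (which has $\varphi = 1$) can at most multiply or add a bounded amount to $\varphi$ at each stage — more precisely $\varphi(M) \le \binom{\text{something}}{\,}$ type bound in terms of $\text{level}$, or the cleaner fact from \cite{ABIM} that $\text{level}^{C}_{\T}(M)$ bounds below a suitable ``rank'' invariant. Pinning down the exact combinatorial inequality relating $\varphi(Z_i)$ to $\text{level}(Z_i)$, and checking it is sharp enough to yield $i+1$ rather than merely $\Omega(\log i)$ or $\Omega(\sqrt i)$, is the delicate point.

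\medskip
\noindent
\textbf{Main obstacle.} The hard part will be the lower bound: getting the \emph{exact} value $i+1$ (not just a coarse growth rate) requires either a precise ``ghost lemma''-style argument — exhibiting a nonzero composite of $i+1$ ghost maps (morphisms killed by $\text{Hom}(C^*(S^d;\K),-)$ appropriately interpreted) out of $Z_i$, which certifies that $Z_i \notin \text{{\tt thick}}^i$ — or a careful bookkeeping of how the two cohomology classes of $Z_i$ in widely separated degrees cannot both be captured by a filtration of class $i-1$ built from $d$-spheres, whose ``reach'' in each step is $d-1$. Since \cite[Proposition 6.6]{S} is exactly this statement, I would ultimately follow Schmidt's argument: combine the semifree construction above for ``$\le i+1$'' with a ghost-map obstruction adapted to $\D^c(C^*(S^d;\K))$ for ``$\ge i+1$'', using that each irreducible arrow in the $\mathbb{Z}A_\infty$ component contributes one genuinely new step and cannot be shortcut because $H^*(S^d;\K)$ has no idempotents beyond $0,1$ and its Koszul dual is a polynomial ring on one generator of degree $d-1$.
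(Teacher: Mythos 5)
The paper does not actually prove this statement; it is cited from Schmidt's paper \cite[Proposition 6.6]{S}, so there is no internal proof to compare against. That said, your reconstruction can be assessed on its own merits.

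Your upper bound is correct and is essentially the standard argument. After transporting along the formality equivalence $\D(C^*(S^d;\K))\simeq \D(H^*(S^d;\K))$, one writes $Z_i$ explicitly as the semifree module over $H^*(S^d;\K)=\K[x]/(x^2)$ with basis $e_0,\dots,e_i$, $\deg e_k=(k-i)(d-1)$, differential $\partial e_k=x\,e_{k-1}$ and $\partial e_0=0$; the obvious filtration by the subscript $k$ is a finite semifree filtration of class $i$, so Theorem \ref{thm:ABIM-I} gives $\text{level}(Z_i)\leq i+1$. (One checks $H^*$ is $\K$ in degrees $-i(d-1)$ and $d$, matching Theorem \ref{thm:J}, and indecomposability pins down the isomorphism type.)

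The lower bound, however, contains a genuine gap, and the $\varphi$-based route you propose cannot be patched to close it. It is true that $\varphi(Z_i)=\dim H^*(Z_i\otimes^{\text{L}}_A\K)=i+1$ (visible from the semifree model above, where $Z_i\otimes_A\K$ has zero differential). But there is no inequality of the form $\text{level}_A(M)\geq \varphi(M)$ or $\text{level}_A(M)\geq c\cdot\varphi(M)$: taking finite coproducts makes $\varphi$ as large as you like while keeping the level equal to $1$, so $\varphi$ alone can never bound the level from below. The correct tool is precisely the \emph{ghost lemma} of \cite{ABIM}, which you flag under ``main obstacle'' but do not execute. Concretely, one exhibits $i$ ghost morphisms (maps inducing $0$ on all $H^*$, equivalently on $\text{Hom}_{\D(A)}(\Sigma^?A,-)$) whose composite out of $Z_i$ is nonzero; the connecting maps $Z_k\to\Sigma Z_0$ in the triangles $Z_0\to Z_{k+1}\to Z_k\to\Sigma Z_0$ arising from the filtration above are ghosts for $d\geq 2$, and splicing them appropriately gives the required non-vanishing $i$-fold ghost composite, forcing $Z_i\notin\text{{\tt thick}}^i_{\D(A)}(A)$. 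Your ``careful bookkeeping of two cohomology classes in widely separated degrees'' intuition is not on its own a proof either — amplitude of $H^*(M)$ is not bounded by free class, because one may shift generators arbitrarily. So the sketch is structurally sound and the upper bound is complete, but the lower bound needs the ghost-lemma argument filled in rather than the $\varphi$-growth heuristic.
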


\section{Proof of Theorem \ref{thm:main}}
In what follows, we write $C^*( \ )$ and $H^*( \ )$ for $C^*( \ ; \K)$ and $H^*(\ ; \K)$, respectively 
if the coefficients are clear from the context. 

Let $X$ be a simply-connected formal space and 
$m_X : TV_X \stackrel{\simeq}{\to} C^*(X; \K)$ be a
minimal model. 
We then have the following equivalences 
of triangulated categories; see \cite[Proposition 4.2]{KM},  
$$
\xymatrix@C30pt@R15pt{
\D(C^*(X; \K)) \ar[r]_(0.55){\simeq}^(0.55){m_X^*} & \D(TV_X) 
 \ar[r]^{- \otimes_{TV_X}^\text{L}H^*(X; \K)}_(0.45){\simeq} & 
\D(H^*(X; \K)),  
}
$$
where $m_X^*$ is the pullback functor; that is, for a $C^*(X; \K)$-module $M$, 
$m_X^*M$ is defined to be the module $M$ endowed with the $TV_X$-module structure via $m_X$. 
We denote by $F_X$ the composite of the functors: 
$F_X = - \otimes_{TV_X}^\text{L}H^*(X; \K) \circ m_X^*$.  
Observe that the functor $F_X$ leaves the cohomology of an object
unchanged; see \cite[Proposition 6.7]{F-H-T} for example. 

\begin{lem} 
\label{lem:key} 
Under the same hypothesis as in Theorem \ref{thm:main}, the differential
 graded module 
$F_X(C^*(E\times_B X; \K))$
 is isomorphic to $H^*(E; \K)\otimes^\text{\em L}_{H^*(B; \K)}H^*(X; \K)$ 
in the category $\text{\em D}(H^*(X; \K))$.  
\end{lem}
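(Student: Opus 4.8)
The strategy is to exploit the commutative diagram of DG algebras given by relative $\K$-formalizability, together with the fact that the functor $F_X$ is computed via a semifree resolution over $TV_X$ and preserves enough structure. First I would unwind the pullback: since $q$ is a fibration, the Eilenberg--Moore theorem gives a natural quasi-isomorphism of DG $C^*(X)$-modules $C^*(E\times_B X) \simeq C^*(E)\otimes^{\text L}_{C^*(B)}C^*(X)$, where $C^*(E)$ and $C^*(X)$ are regarded as modules over $C^*(B)$ via $q^*$ and $f^*$ respectively. Concretely, choosing a semifree resolution $P \stackrel{\simeq}{\to} C^*(E)$ over $C^*(B)$, one has $C^*(E\times_B X)\simeq P\otimes_{C^*(B)}C^*(X)$ in $\D(C^*(X))$.

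Next I would transport everything across the zig-zags in Definition~\ref{defn:formalizable}. The homotopy-commutative diagram there, together with the equivalences $m_X^*$, $m_B^*$, $m_E^*$ and the derived base-change functors, lets me replace the triple $C^*(B)\to C^*(E)$, $C^*(B)\to C^*(X)$ by the triple $H^*(B)\stackrel{H^*(q)}{\to}H^*(E)$, $H^*(B)\stackrel{H^*(f)}{\to}H^*(X)$ up to isomorphism in the relevant derived categories. The key point is that $\widetilde q$ and $\widetilde f$ make the whole diagram homotopy commute \emph{simultaneously} (this is exactly why relative formalizability, not just formality of each space, is needed), so that the $TV_B$-module structures match on the nose in the homotopy category and the base-change functors can be applied coherently. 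Since $F_X$ is, by construction, $-\otimes^{\text L}_{TV_X}H^*(X)\circ m_X^*$, applying it to $C^*(E\times_B X)\simeq P\otimes_{C^*(B)}C^*(X)$ and using that derived tensor products compose (associativity of $\otimes^{\text L}$, i.e. $P\otimes^{\text L}_{C^*(B)}C^*(X)\otimes^{\text L}_{C^*(X)}H^*(X)\cong P\otimes^{\text L}_{C^*(B)}H^*(X)$, and then pushing $P$ down through $\widetilde q$, $\phi_B$) should identify $F_X(C^*(E\times_B X))$ with $H^*(E)\otimes^{\text L}_{H^*(B)}H^*(X)$ in $\D(H^*(X))$.

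Concretely, I would run the argument through a single explicit semifree model: take a minimal $TV$-model $m_B\colon TV_B\to C^*(B)$, lift $C^*(E)$ to a $TV_B$-semifree module $P$ via $\widetilde q$, and observe that $P\otimes_{TV_B}H^*(B)$ is a semifree $H^*(B)$-module computing $H^*(E)\otimes^{\text L}_{H^*(B)}H^*(B)\cong H^*(E)$ — here one uses that $F_B$ preserves cohomology and that $\widetilde q$ induces an iso on cohomology. Then the $H^*(X)$-module $\bigl(P\otimes_{TV_B}H^*(B)\bigr)\otimes_{H^*(B)}H^*(X)$ is simultaneously a model for $F_X(C^*(E\times_B X))$ (tracking the module structure through $\widetilde f$ and $m_X$) and a semifree resolution computing $H^*(E)\otimes^{\text L}_{H^*(B)}H^*(X)$. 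Matching the two module structures is where the simultaneous homotopy-commutativity of the big diagram is essential.

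**Main obstacle.** The technical heart is the bookkeeping of module structures across the two zig-zags: one must check that the $H^*(B)$-module structure on the model of $C^*(E)$ obtained by pushing forward along $\widetilde q$ agrees — in $\D(H^*(B))$, compatibly with the $f$-side — with $H^*(E)$ as an $H^*(B)$-module via $H^*(q)$, and likewise on the $X$-side, and that these are compatible simultaneously. Homotopy-commutativity only gives agreement up to homotopy, so one needs either to rectify the diagram strictly (replacing maps by cofibrant models) or to argue directly in the homotopy category that the induced maps on semifree resolutions agree up to the ambiguity that disappears after tensoring. Everything else — Eilenberg--Moore, associativity of $\otimes^{\text L}$, $F_X$ preserving cohomology — is standard and cited in the excerpt.
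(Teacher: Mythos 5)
Your plan follows the same overall route as the paper: Eilenberg--Moore to identify $C^*(E\times_BX)$ with $C^*(E)\otimes^{\mathrm L}_{C^*(B)}C^*(X)$, then base change through the minimal $TV$-models and associativity of $\otimes^{\mathrm L}$ to land in $\mathrm D(H^*(X))$. You also correctly isolate the real difficulty: the squares in Definition~\ref{defn:formalizable} only commute up to homotopy, so the $TV_B$-module structures on $C^*(E)$ coming from $q^*\circ m_B$ and from $m_E\circ\widetilde q$ (and similarly on the $f$-side) need not agree on the nose, and one has to show that the two resulting derived tensor products are nevertheless isomorphic. But you leave this step as an acknowledged obstacle ("rectify strictly, or argue in the homotopy category") rather than closing it.

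The paper closes exactly this gap with a specific device that does not appear in your sketch: the Baues--Lemaire cylinder object $TV_B\wedge I$ for DG algebras. Choosing homotopies $H\colon TV_B\wedge I\to C^*(E)$ and $K\colon TV_B\wedge I\to C^*(X)$ witnessing the homotopy-commutativity of the two squares, one endows $C^*(E)$ and $C^*(X)$ with $TV_B\wedge I$-module structures and forms $C^*(E)\otimes^{\mathrm L}_{TV_B\wedge I}C^*(X)$. Since the two cylinder inclusions $\e_0,\e_1\colon TV_B\to TV_B\wedge I$ are quasi-isomorphisms, the induced maps $1\otimes_{\e_0}1$ and $1\otimes_{\e_1}1$ give a zig-zag of quasi-isomorphisms identifying $C^*(E)\otimes^{\mathrm L}_{TV_B}C^*(X)$ (with the $q^*m_B/f^*m_B$-structures) with $C^*(E)\otimes^{\mathrm L}_{TV_B}C^*(X)$ (with the $m_E\widetilde q/m_X\widetilde f$-structures), and thence with $TV_E\otimes^{\mathrm L}_{TV_B}TV_X$. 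The same cylinder trick is used a second time, after tensoring down to $H^*(X)$, to compare the $\widetilde q/\widetilde f$-structures with the $H^*(q)/H^*(f)$-structures via $\phi_B$. Everything else in your proposal -- Eilenberg--Moore, the bar-resolution/associativity step collapsing $\otimes^{\mathrm L}_{TV_X}H^*(X)$ onto $\otimes^{\mathrm L}_{TV_B}H^*(X)$, and $F_X$ preserving cohomology -- is exactly what the paper does. So the proposal is the right plan with the crucial mediating gadget (the cylinder object and its two quasi-isomorphic inclusions) still to be supplied.
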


\begin{proof} We use the same notation as in Definition \ref{defn:formalizable}.  
Let $H : TV_B\wedge I \to C^*(E)$ and 
$K : TV_B\wedge I \to C^*(X)$ be homotopies from $q^*\circ m_B$ to 
$m_E\circ \widetilde{q}$ and from $f^*\circ m_B$ to 
$m_E\circ \widetilde{f}$, respectively. Here $TV_B\wedge I$ denotes the
 cylinder object due to Baues and Lemaire \cite{FHT} in the category of
 DG algebras; see Appendix. 
The homotopies $H$ and $K$ make $C^*(E)$ and $C^*(X)$ into a right 
$TV_B\wedge I$-module and a left $TV_B\wedge I$-module, respectively, so there exists a right 
$C^*(X)$-module of the form 
$C^*(E)\otimes^\text{L}_{TV_B\wedge I}C^*(X)$. 
Then there exists a sequence of quasi-isomorphisms of $TV_X$-modules 
$$
\xymatrix@C25pt@R16pt{
C^*(E\times_BX) & C^*(E)\otimes^\text{L}_{C^*(B)}C^*(X)
 \ar[l]^(0.55){\simeq}_(0.55){EM} &  C^*(E)\otimes^\text{L}_{TV_B}C^*(X)
 \ar[d]^{1\otimes_{\e_0}1}_{\simeq} \ar[l]_{1\otimes_{m_B}1}^{\simeq} \\
TV_E\otimes^\text{L}_{TV_B}TV_X \ar[r]_(0.45){\simeq}^(0.45){m_E\otimes_1m_X} &
C^*(E)\otimes^\text{L}_{TV_B}C^*(X)
 \ar[r]^{1\otimes_{\e_1}1}_{\simeq}  
 & C^*(E)\otimes^\text{L}_{TV_B\wedge I}C^*(X), 
}
$$
where $EM$ denotes the Eilenberg-Moore map; 
see \cite[Theorem 3.2]{Smith}. 
Therefore we see that  
$m_X^*(C^*(E\times_BX))$ is
 isomorphic to $TV_E\otimes^\text{L}_{TV_B}TV_X$ in $\D(TV_X)$. 
By considering the bar resolution of $TV_E$ as a $TV_B$-module, we see that, as 
objects in  $\D(H^*(X))$,  
$(TV_E\otimes^\text{L}_{TV_B}TV_X)\otimes^\text{L}_{TV_X}H^*(X)$
is isomorphic to $TV_E\otimes^\text{L}_{TV_B}H^*(X)$. 
Then a sequence of quasi-isomorphisms similar to that above connects 
$TV_E\otimes^\text{L}_{TV_B}H^*(X)$ with 
$H^*(E)\otimes^\text{L}_{H^*(B)}H^*(X)$ 
in $\D(H^*(X))$. In fact we have quasi-isomorphisms
$$
\xymatrix@C25pt@R16pt{
TV_E\otimes^\text{L}_{TV_B}H^*(X) \ar[r]_{\simeq}^{\phi_E\otimes 1} & 
H^*(E)\otimes^\text{L}_{TV_B}H^*(X) \ar[r]_\simeq^{1\otimes_{\e_0}1}
 & H^*(E)\otimes^\text{L}_{TV_B\wedge I}H^*(X) \\
& H^*(E)\otimes^\text{L}_{H^*(B)}H^*(X) 
    & H^*(E)\otimes_{TV_B}^\text{L}H^*(X) 
 \ar[l]_{\simeq}^{1\otimes_{\phi_B}1} \ar[u]_{1\otimes_{\e_1}1}^{\simeq}.
}
$$
This completes the proof.
\end{proof}

\medskip
\noindent
{\it Proof of Theorem \ref{thm:main}.} We see that in $\D(H^*(X))$ 
$$F_X C^*(X) = (m_X^*C^*(X))\otimes_{TV_X}^LH^*(X)= 
TV_X\otimes^L_{TV_X}H^*(X) =H^*(X).
$$ 
Then the result \cite[Proposition 3.4 (1)]{ABIM} allows us to deduce that 
$\text{level}_{\D(C^*(X; \K))}(M)=
\text{level}_{\D(H^*(X; \K))}(F_XM)$
for any object $M$ in  $\D(C^*(X, \K))$. 
By virtue of Lemma \ref{lem:key}, we have the result.  
\hfill\qed

\medskip
We recall a fundamental property of an object laying in the thickening of $\D(A)$. 
The result follows from the fact that a triangle induces a long exact sequence in homology. 

\begin{lem}\label{lem:fund}
Let $A$ be a DG algebra, $M$  a DG $A$-module and $n$ a positive integer. 
Suppose that $\dim H(A) < \infty$. Then $\dim H(M) <\infty$ 
for any object $M \in {\tt thick}_{\DD(A)}^n(A)$. 
\end{lem}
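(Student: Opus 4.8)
The plan is to prove the statement by induction on $n$. The base case $n = 1$ is essentially the definition: any object in $\text{\tt thick}^1_{\D(A)}(A)$ is a retract of a finite coproduct of shifts of $A$, and since $\dim H(A) < \infty$, such a coproduct has finite-dimensional homology; a retract of an object with finite-dimensional homology again has finite-dimensional homology, because in a Krull--Remak--Schmidt or merely additive setting a retract $M$ sits inside $N$ as a direct summand, so $H(M)$ is a direct summand of $H(N)$.

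For the inductive step, suppose the claim holds for $n - 1$ and let $M \in \text{\tt thick}^n_{\D(A)}(A)$. First I would reduce to the case where $M$ is not merely a retract but actually admits an exact triangle $M_1 \to M \to M_2 \to \Sigma M_1$ with $M_1 \in \text{\tt thick}^{n-1}_{\D(A)}(A)$ and $M_2 \in \text{\tt thick}^1_{\D(A)}(A)$; the general retract case follows by the same summand argument as in the base case once the triangle case is settled. Given such a triangle, the long exact sequence in homology
$$
\cdots \to H^i(M_1) \to H^i(M) \to H^i(M_2) \to H^{i+1}(M_1) \to \cdots
$$
shows $H^i(M)$ is squeezed between $H^i(M_1)$ and $H^i(M_2)$. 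By the inductive hypothesis $\dim H(M_1) < \infty$, and by the base case $\dim H(M_2) < \infty$; hence $\dim H(M) < \infty$, as each $H^i(M)$ is an extension of a subspace of $H^i(M_2)$ by a quotient of $H^i(M_1)$, and only finitely many degrees are nonzero.

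I do not expect any serious obstacle here — the result is genuinely a formal consequence of the fact that triangles yield long exact sequences in homology and that the property ``finite-dimensional total homology'' is closed under retracts, finite coproducts, shifts, and extensions. The only point requiring a moment's care is the bookkeeping in the retract step: one should note that in $\D(A)$ a retract of $N$ is a direct summand, so applying the homology functor shows $H(M)$ is a direct summand of $H(N)$, whence finite-dimensionality is inherited. With that observation in place the induction goes through cleanly.
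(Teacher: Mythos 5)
Your proof is correct and follows exactly the route the paper gestures at: the paper only remarks that the lemma ``follows from the fact that a triangle induces a long exact sequence in homology,'' and your induction on $n$, with the base case handled via retracts of finite coproducts of shifts and the inductive step via the long exact sequence of the defining triangle, is the standard way to fill in those details. No gaps.
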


\begin{ex}
\label{ex:S7} Let $\nu : S^7 \to S^4$ be the Hopf map and $E_{\nu}$ the
 pullback of $\nu : S^7 \to S^4$ over itself, giving rise to a fibration 
$S^3 \to E_{\nu} \to S^7$. 
We prove now that 
$$
\text{level}_{S^7}(E_{\nu}) 
\neq \text{level}_{\D(H^*(S^7; \K))}(H^*(S^7;\K)\otimes_{H^*(S^4;\K)}^\text{\em L}H^*(S^7; \K)).  
\eqnlabel{add-0}
$$
Indeed, there is a Koszul resolution of the form 
$$
(\Gamma[w]\otimes \wedge(s^{-1}x_4)\otimes H^*(S^4; \K), \delta) \to \K\to 0
$$
with $\delta(s^{-1}x_4)=x_4$ and $\delta(\omega) = s^{-1}x_4\otimes x_4$, where $x_4$ denotes the generator of 
$H^*(S^4; \K)$, and $\Gamma$ the divided powers algebra functor; see \cite[Proposition1.2]{K1}. 
This gives rise to a semifree resolution 
$$
H^*(S^7; \K)\otimes \Gamma[w]\otimes \wedge(s^{-1}x_4)\otimes H^*(S^4; \K)
\to H^*(S^7; \K) \to 0
$$
of $H^*(S^7; \K)$ as an $H^*(S^4; \K)$-module. 
Thus we have 
$$
M:=H^*(S^7;\K)\otimes_{H^*(S^4;\K)}^\text{\em L}H^*(S^7; \K)
=(H^*(S^7; \K)\otimes \Gamma[w]\otimes \wedge(s^{-1}x_4)\otimes H^*(S^7;
 \K), 0).
$$ 
Since $\dim H(M)= \infty$, it follows from Lemma \ref{lem:fund} that $M$ is not in the thickening 
$\text{{\tt thick}}^n_{\D(H^*(S^7; \K))}(H^*(S^7; \K))$ 
for any $n\geq 0$. 
This implies that the right hand side of (4.1) is infinite. 

 On the other hand, by Proposition \ref{prop:compact}, we see that 
$\text{level}_{S^7}(E_{\nu}) < \infty$ 
because the dimension of the cohomology of the fibre $S^3$ is finite. 
We refer the reader to Example \ref{ex:non-formal} for the explicit
 calculation of the level of $E_\nu$. 
\end{ex}

\section{Proofs of Proposition \ref{prop:pile} and Theorem \ref{thm:ex-level} }

In this section, we work in rational homotopy theory and use 
Sullivan models for spaces and fibrations extensively.  
For a thorough introduction to these models, we refer the reader to the book \cite{F-H-T}.

As mentioned in the Introduction, Theorem \ref{thm:ex-level} is deduced from 
Proposition \ref{prop:pile}. The proof of the proposition is given first. 

\medskip
\noindent
{\it Proof of Proposition \ref{prop:pile}.} Let $Y_0$ be the space
$B\times (\displaystyle{\times_{i=1}^sS^{2n_i+1}})$ 
and $\Lambda V_B$ a minimal model for $B$. 
Then the Sullivan model for the fibration 
$S^{2m_i+1} \to Y_i \stackrel{\pi_c}{\to} Y_{i-1}$ has the form 
$\wedge V_{i -1}\to \wedge (x_i)\otimes \wedge V_{i-1}=\wedge V_i$, where 
$\wedge V_0 = \wedge V_B\otimes \wedge (y_{01}, ..., y_{0s})$ with 
$d(y_{0i})= 0$. 
Since the DG algebras $C^*(B; \Q)$ and $\wedge V_B$ are connected with
quasi-isomorphisms, it follows from \cite[Proposition 4.2]{KM} and \cite[Lemma 2.4]{ABIM} that  
$\text{level}_{B}Y_c =
\text{level}_{\text{D}(\wedge V_B)}\wedge V_c$. 
 
Define a filtration $\{F_l\}_{0\leq l\leq c}$ 
of the $\wedge V_B$-module $\wedge V_c$ by
$$
F_l = \Lambda V_B\otimes \Q\{y_{01}^{\e_{01}}\cdots y_{0s}^{\e_{01}}x_1^{\e_1}
\cdots x_l^{\e_l} \ | \ \e_{0i} \ \text{and} \ \e_j \ \text{are} \
0 \ \text{or} \ 1 \}.  
$$ 
It is immediate that $F_l/F_{l-1}$ is a finitely generated free $\wedge V_B$-module for each 
$l \geq 0$. Then it follows that $\{F_l\}_{0\leq l\leq c}$ is a finite semifree
filtration of class at most $c$. By virtue of Theorem \ref{thm:ABIM-I}, we have 
$\text{level}_{\text{D}(\wedge V_B)}\wedge V_c \leq c+1$.
\hfill \qed

\medskip
We now establish a weaker version of Theorem \ref{thm:ex-level}.  

\begin{lem} \label{lem:ex-level}
For any positive integer $l$, 
 there exists an object $P_l \to S^d$
 in $\mathcal{TOP}_{S^d}$ such that 
$$
\text{\em level}_{S^d}(P_l)\geq l. 
$$ 
\end{lem}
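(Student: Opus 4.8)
The plan is to exhibit, for each $l\geq 1$, a space $P_l$ built as an iterated spherical fibration over $S^d$ whose $C^*(S^d;\K)$-level is at least $l$, and to detect this lower bound through the cohomology of the molecules recorded in Theorem \ref{thm:J} together with Proposition \ref{prop:Z}. First I would work rationally, so $\K=\Q$, and replace $C^*(S^d;\Q)$ by its minimal model; since $S^d$ is $\Q$-formal this model is $(\wedge(x_d),0)$ when $d$ is even and $(\wedge(x_d,y_{2d-1}),dy=x^2)$ when $d$ is odd, and by \cite[Proposition 4.2]{KM} and \cite[Lemma 2.4]{ABIM} the level is computed in $\D(\wedge V_{S^d})$. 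The candidate $P_l$ should be chosen so that the object $C^*(P_l;\Q)$ in $\D^c(C^*(S^d;\Q))$ contains, as a retract, the molecule $Z_{l-1}$ from Theorem \ref{thm:J} (or a shift of it); since $\text{level}(Z_{l-1})=l$ by Proposition \ref{prop:Z} and level is monotone under retracts, this forces $\text{level}_{S^d}(P_l)\geq l$.

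The key steps, in order, are: (1) describe iteratively a tower of odd-sphere fibrations $P_l\to P_{l-1}\to\cdots\to P_1\to S^d$, choosing the Euler-class/attaching data at each stage (using the homological description of the vertices of the Auslander--Reiten quiver in Theorem \ref{thm:J}) so that the $H^*(S^d;\Q)$-module structure on $H^*(P_l;\Q)$, and more precisely the $C^*(S^d;\Q)$-module $C^*(P_l;\Q)$, decomposes in $\D^c(C^*(S^d;\Q))$ with $Z_{l-1}$ (up to shift) as a direct summand --- this is where Remark \ref{rem:Krull-Remak-Schmidt}, the Krull--Remak--Schmidt property, is used so that summands are well defined; (2) verify that each $P_i$ really sits in $\mathcal{TOP}_{S^d}$ via the composite projection and that its cochain complex is compact, which follows from Proposition \ref{prop:compact} because the homotopy fibre of $P_l\to S^d$ has finite-dimensional rational cohomology; (3) invoke $\text{level}^A_{\D(A)}(M)\geq \text{level}^A_{\D(A)}(M')$ for a retract $M'$ of $M$ (from \cite{ABIM}, cf.\ the discussion around Theorem \ref{thm:ABIM-I}) to conclude $\text{level}_{S^d}(P_l)_\Q\geq \text{level}_{\D(C^*(S^d;\Q))}(Z_{l-1})=l$.

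The main obstacle I expect is step (1): arranging that $Z_{l-1}$ genuinely appears as a \emph{retract} of $C^*(P_l;\Q)$, not merely that $H^*(P_l;\Q)$ has a summand abstractly isomorphic to $H^*(Z_{l-1})$. Because molecules over $S^d$ are characterized by their cohomology (Remark \ref{rem:classification}), it suffices to produce an $H^*(S^d;\Q)$-module splitting at the level of $\D^c$, so the real work is to compute the Sullivan model of the tower explicitly, identify its class in the derived category, and exhibit idempotents realizing the desired decomposition. Concretely I would take the fibre spheres in the tower to have dimensions forcing the Koszul-type model of $\wedge V_{P_l}$ over $\wedge V_{S^d}$ to split as a tensor product with a trivial-differential factor plus the piece computing $Z_{l-1}$; tracking the differentials through the iterated construction and confirming the splitting survives in $\D(\wedge V_{S^d})$ is the delicate, computational heart of the argument. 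Once the retract is in hand, the lower bound is immediate from Proposition \ref{prop:Z}, and combining this lemma with Proposition \ref{prop:pile} (which gives the matching upper bound for a suitably chosen tower) yields Theorem \ref{thm:ex-level}.
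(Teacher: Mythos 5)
Your overall strategy matches the paper's: build $P_l$ by iterated spherical fibrations over $S^d$, observe that $C^*(P_l;\Q)$ is compact (Proposition \ref{prop:compact}), decompose it into molecules via the Krull--Remak--Schmidt property (Remark \ref{rem:Krull-Remak-Schmidt}), and conclude via Proposition \ref{prop:Z}. But the step you flag as the ``delicate, computational heart'' --- engineering $Z_{l-1}$ as a retract by exhibiting idempotents --- is not how the paper closes the argument, and as stated it is a genuine gap: you never carry it out, and it is not clear it can be carried out in a controlled way, since one does not a priori know which idempotents of $\mathrm{End}(C^*(P_l;\Q))$ realize the KRS decomposition.

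The missing idea is that one should \emph{not} try to force a specific molecule such as $Z_{l-1}$ to appear. Instead the paper makes the fibre spheres' dimensions $|w_i|\approx 2m-1$ with $m\gg ld$, so that $H^*(P_l;\Q)$ contains classes very far apart in degree. Then one argues non-constructively: the decomposition into molecules exists by Remark \ref{rem:Krull-Remak-Schmidt}; every molecule is some $\Sigma^{-i}Z_k$ with cohomology in exactly two degrees, separated by amplitude $(k+1)d-k$ (Theorem \ref{thm:J} and Remark \ref{rem:classification}); and since the degree-$0$ class and the high-degree classes of $H^*(P_l;\Q)$ must be distributed among such pairs, a short case analysis on which molecule contains which class shows that \emph{some} summand $\Sigma^{-i}Z_k$ must have $k\geq l-1$ because there is no other way to pair up a low-degree class with one of degree $\approx 2ld$. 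That gives $\text{level}_{S^d}(P_l)\geq l$ by Proposition \ref{prop:Z}, with no need to identify the decomposition or the particular molecule. Your appeal to Remark \ref{rem:classification} (``molecules are characterized by their cohomology'') is exactly the ingredient that makes this soft argument work --- you invoke it but then retreat to the harder idempotent route, which is unnecessary. Also note the paper does not target $Z_{l-1}$ itself: it lands on various $\Sigma^{-i}Z_k$ with $k\geq l-1$ or even $k\geq 2l-2$ depending on the case, which is fine since only a lower bound is wanted.

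One more small point: the paper's construction is not literally a tower of sphere bundles defined at each stage by an Euler class; it writes down a Koszul--Sullivan extension $(\wedge(x,\xi),\delta)\hookrightarrow (\wedge(x,\xi,\rho,w_0,\dots,w_{l-1}),D)$ with $D(w_i)=(\rho x-\xi)w_{i-1}$, takes its spatial realization, and pulls back along the rationalization $S^d\to S^d_\Q$. The tower of odd-sphere fibrations only appears afterward, in the proof of Theorem \ref{thm:ex-level}, to feed Proposition \ref{prop:pile} for the matching upper bound. Your proposal conflates these two uses of the construction.
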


\begin{proof}
In the case where $l=1$, the sphere $S^d$ is the space we desire.  
In what follows, we assume that $l\geq 2$. 
Let $m$ be an integer sufficiently larger than $ld$. 

Assume that $d$ is even. 
We have a minimal model 
$B=(\wedge (x,  \xi), \delta)$ for $S^d$ with $\delta(\xi) = x^2$,
where $\deg x = d$.  Consider a Koszul-Sullivan extension of the form 
$$
B \to (\wedge(x, \xi, \rho, w_0, ..., w_{l-1}), D)=:M_{l+1}
$$
for which the differential $D$ is defined by 
$$
D(\rho)=x,  D(w_0)=0 \ \text{and} \   
D(w_i)=(\rho x-\xi)w_{i-1}
$$ 
for $i\geq 1$, where 
$\deg w_i = i(2d-1)+(2m-1)-i$. 
Let $\pi : P_{l+1} \to S^d$ be the pullback of the fibration 
$|M_{l+1}| \to |B|=S^d_{{\mathbb Q}}$, which is the spatial realization of
the extension, by the localizing map $S^d \to S^d_{{\mathbb Q}}$; see
\cite[Proposition 7.9]{F-H-T}. 
Since $M_{l+1}$ is a semifree $B$-module, it follows that  
$H^*(M_{l+1}\otimes_B^L{\mathbb Q})=H^*(M_{l+1}\otimes_B{\mathbb Q})=
H^*(\wedge (\rho, w_0, w_1, ..., w_{l-1}), \overline{D})$. 
The cochain complex  $M_{l+1}\otimes_B{\mathbb Q}$ is generated by elements with odd degree so that 
its homology is of finite dimension. 
It follows from Proposition \ref{prop:compact} that $C^*(P_{l+1}; \Q)$ is in $\D^c(C^*(S^d; \Q))$.


By using the manner in \cite[Section 7]{K-M-N} for computing the 
homology of a DG algebra (or by the direct calculation), we have elements $1$,
$\xi$, $w_0$ and $(\rho x-\xi)w_{l-1}$, which form a basis of $H^*(M_{l+1})$ of 
degree less than or equal to $l(2d-1)+(2m-1)-(l-1)$. Let $Z$ be an 
indecomposable direct summand (a molecule) of $C^*(P_{l+1}; \Q)$ in $\D^c(C^*(S^d; \Q))$
containing a cocycle of degree zero; see Remark
 \ref{rem:Krull-Remak-Schmidt}. 
By virtue of Theorem \ref{thm:J}, 
we see that 
$Z=\Sigma^{-k(d-1)}Z_k$ for some $k\geq 0$; 
see Remark \ref{rem:classification}. 
Suppose that $Z$ contains 
a representative of $w_0$, $(\rho x-\xi)w_{l-1}$ or a cohomology class 
of degree greater than $l(2d-1)+(2m-1)-(l-1)$.  
Theorem \ref{thm:J} implies that $H^i(Z)=\Q$ if and only
if $i=(k+1)d-k$ or $i=0$. It follows that $(k+1)d-k\geq 2m-1 \geq
2ld-1$ and hence $k\geq 2l-1\geq l$.  Then Proposition \ref{prop:Z} allows us
to conclude that $\text{level}_{S^d}(P_{l+1})\geq
l+1$.  

Suppose that $Z$ contains a representative of the element $\xi$. 
By Theorem \ref{thm:J}, we see that $Z=\Sigma^{-(d-1)}Z_1$. 
In that case, let $Z'$ be a molecule of $C^*(P_{l+1}; \Q)$ containing a representative of $w_0$. 
Observe that $Z'\neq Z$. If $Z'$ contains a representative of the
element $(\rho x-\xi)w_{l-1}$, then
$Z'=\Sigma^{-(2m-1)}\Sigma^{-(2l-1)(d-1)}Z_{2l-1}$ since 
$\dim H^*(Z')=2$ and the amplitude of $Z'$ should be $2ld-2l+1$. 
If $Z'$ contains a representative of the cohomology class of degree greater than $l(2d-1)+(2m-1)-(l-1)$, 
then $Z'=\Sigma^{-(2m-1)}\Sigma^{-(2l-1)(d-1)}Z_k$ 
for some $k\geq 2l-1$. 
Proposition \ref{prop:Z} yields that 
$\text{level}_{S^d}(P_{l+1})\geq 2l$.   
 
Suppose that $d$ is odd. We have a Koszul-Sullivan extension of the form 
$$
(\wedge(x), 0) \to (\wedge(x, w_0, w_1, ..., w_{l-1}), D)=:N_l
$$
for which the differential $D$ is defined by $D(x)=D(w_0)=0$ and 
$D(w_i)=xw_{i-1}$ for $i\geq 1$, where $\deg x=d$ and $\deg w_0=2m-1$.
We assume that the integer $m$ is sufficiently larger than $ld$.  Observe that 
$\deg w_i = id+(2m-1)-i$. Let  
$\pi : P_l \to S^d$ be the pullback of the fibration 
$|N_l| \to |(\wedge (x), 0)|=S^d_{{\mathbb Q}}$  
by the localizing map $S^d \to S^d_{{\mathbb Q}}$. The same argument as
above works again to show that 
$\text{level}_{S^d}(P_l)\geq l$. This
completes the proof. 
\end{proof}

\medskip
\noindent
{\it Proof of Theorem \ref{thm:ex-level}.} 
Let $P_l \to S^d$ be the fibration constructed 
in the proof of Lemma \ref{lem:ex-level}.  
We have a sequence of fibrations 
$$
S^{|\rho|} \to Y_1 \stackrel{\pi_1}{\longrightarrow} S^d  
 \times S^{|w_0|},   \ 
S^{|w_1|}  \to Y_2 \stackrel{\pi_2}{\longrightarrow} Y_1, \ ..., \
S^{|w_{l-1}|} \to Y_l \stackrel{\pi_l}{\longrightarrow} Y_{l-1}  
$$
in which $Y_l=P_{l+1}$ if $d$ is even, where $|w|$ denotes the degree of an element $w$. 
If $d$ is odd, we have a sequence
of fibrations 
$$
S^{|w_1|} \longrightarrow Y_1 \stackrel{\pi_1}{\longrightarrow} S^d \times S^{|w_0|},  \ 
S^{|w_2|}  \to Y_2 \stackrel{\pi_2}{\longrightarrow} Y_1, \ ..., \ 
S^{|w_{l-1}|} \to Y_{l-1} \stackrel{\pi_{l-1}}{\longrightarrow} Y_{l-2}  
$$
in which $Y_{l-1}=P_{l}$.  
Observe that the integers $|\rho|$ and $|w_i|$ are odd. 
It follows from Proposition \ref{prop:pile} that 
$\text{level}_{S^d}P_l \leq l$. 
By  combining the result with Lemma \ref{lem:ex-level}, the proof is now completed.    
\hfil \qed

\section{Realization of molecules in $\D^c(C^*(S^d; \K))$}

We recall briefly the Hopf invariant. 
Let $\phi : S^{2d-1} \to S^d$ be a map. 
Choose generators $[x_{2d-1}]\in H^{2d-1}(S^{2d-1}; {\mathbb Z})$ and $[x_d]\in
H^d(S^d; {\mathbb Z})$. 
Let $\rho$ be an element of $C^*(S^{2d-1}; {\mathbb Z})$ such that $\phi^*(x_d)=
d\rho$. Since $[x_d]^2=0$ in $H^*(S^d; {\mathbb Z})$, there exists an element 
$\xi$ of $C^*(S^d; {\mathbb Z})$ such that 
$d\xi = x_d^2$. We then have a cocycle of
the form $\rho \phi^*(x_d)-\phi^*(\xi)$ in $C^{2d-1}(S^{2d-1})$. 
The Hopf invariant $H(\phi) \in {\mathbb Z}$ is defined by the equality 
$$
[\rho \phi^*(x_d)- \phi^*\xi] = H(\phi)[x_{2d-1}]. 
$$  

\begin{rem} 
If $d$ is odd, then $H(\phi)$ is always zero.  
\end{rem}

We prove Proposition \ref{prop:spheres} by using 
Proposition \ref{prop:compact}. To this end, we need to consider whether 
the cohomology $H^*(F_\phi; \K)$ is of finite dimension, 
where $F_\phi$ denotes the homotopy fibre of $\phi : S^{2d-1}\to S^d$. 
Observe that $F_\phi$ fits into the pullback diagram ${\mathcal F}'$ : 
$$
\xymatrix@C25pt@R15pt{
\Omega S^d \ar[d] \ar@{=}[r] &  \Omega S^d \ar[d] \\
F_\phi \ar[r] \ar[d] & PS^d \ar[d]^{\pi} \\
S^{2d-1}  \ar[r]_{\phi}  & S^d   .
}
$$
Here $\Omega S^d \to PS^d \stackrel{\pi}{\to} S^d$ is the path-loop fibration. 
The pullback diagram gives rise to the Eilenberg-Moore spectral sequence
$\{E_r^{*,*}, d_r\}$ converging to   
$H^*(F_\phi; \K)$ with 
$$
E_2^{*,*}\cong \text{Tor}^{*,*}_{H^*(S^d; \K)}(H^*(S^{2d-1}; \K), \K).
$$ 
The Koszul resolution of $\K$ as an 
$H^*(S^d; \K)$-module allows us to compute the $E_2$-term. It turns out 
that   
$$
E_2^{*,*}\cong
\left\{
\begin{array}{ll}
 H^*(S^{2d-1};\K)\otimes \wedge(s^{-1}x_d)\otimes \Gamma[\tau]  & 
\text{if} \ d \ \text{is even},   \\
 H^*(S^{2d-1};\K)\otimes \Gamma[s^{-1}x_d]  &  \text{if} \ d \ \text{is
 odd}, 
\end{array} 
\right. 
$$ 
where $\text{bideg} \ s^{-1}x_d=(-1, d)$ and $\text{bideg} \ \tau=(-2, 2d)$;
see \cite[Lemma 3.1]{Smith2} and also \cite[Proposition 1.2]{K1}. 

We relate the Hopf invariant with a differential of the Eilenberg-Moore
spectral sequence (EMSS). 

Recall that the Eilenberg-Moore map induces an isomorphism 
from the homology of the 
bar complex 
$(B(C^*(S^{2d-1}; \K), C^*(S^d; \K), \K), \delta_1+
\delta_2)
$ 
to $H^*(F_\phi; \K)$. Here $\delta_1$ denotes the part of the differential coming from the multiplication 
of the algebra and its action on the module, which decreases bar-length, while $\delta_2$ is induced by the differentials 
of the algebra and module and does not change bar-length. 
By the definitions of differentials $\delta_1$ and $\delta_2$, we see
that  
\begin{eqnarray*}
\delta_1([x_d|x_d])&=&(-1)^d \phi^*(x_d)[x_d] + (-1)^d(-1)^{d+1}[x_d^2]  \\
 &=& \delta_2((-1)^d\rho[x_d] + 1[\xi]),  \\
\delta_1((-1)^d\rho[x_d] + 1[\xi]) &=& (-1)^d\{(-1)^{d-1}\rho \phi^*x_d\} +
 \phi^*\xi \\
&=& - (\rho \phi^*(x_d)- \phi^*\xi). 
\end{eqnarray*} 
It follows from \cite[Lemma 2.1]{K-S} 
that $d_2([x_d|x_d])=H(\phi)_{\K}x_{2d-1}$ in 
the $E_2$-term of the EMSS. 

We denote by $\text{Tor}_{H^*(S^d; \K)}(H^*(S^{2d-1}; \K), \K)_{\text{bar}}$ the torsion product as 
computed by the bar complex, which is necessarily isomorphic to the torsion product computed 
by the Koszul resolution. 

By the same argument as in \cite[Lemma 1.5]{K1}, we have: 
\begin{lem}
\label{lem:E_2} 
The element $[x_d|x_d]$ in 
$\text{\em Tor}_{H^*(S^d; \K)}(H^*(S^{2d-1}; \K), \K)_{\text{\em bar}}$ 
coincides with the element $\tau \in \Gamma[\tau]$ up to isomorphism 
if $d$ is even and with the element 
$\gamma_2(s^{-1}x_d) \in \Gamma[s^{-1}x_d]$ if $d$ is odd. Thus one has 
$d_2(\tau)=H(\phi)_\K x_{2d-1}$ if $d$ is even and 
$d_2(\gamma_2(s^{-1}x_d))=H(\phi)_\K x_{2d-1}=0$ if $d$ is odd. 
\end{lem}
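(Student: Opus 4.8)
The plan is to prove Lemma~\ref{lem:E_2} by identifying the bar-complex generator $[x_d|x_d]$ with the polynomial generator of the divided-powers factor appearing in the $E_2$-term, and then transporting the differential computation already carried out above. First I would recall that the two torsion products, $\text{Tor}_{H^*(S^d;\K)}(H^*(S^{2d-1};\K),\K)_{\text{bar}}$ and the one computed from the Koszul resolution of $\K$ over $H^*(S^d;\K)$, are linked by an explicit comparison quasi-isomorphism between the bar and Koszul resolutions. For the even case, the Koszul resolution has the form $(H^*(S^d;\K)\otimes\wedge(s^{-1}x_d)\otimes\Gamma[\tau],\delta)$ with $\delta(s^{-1}x_d)=x_d$ and $\delta(\gamma_{i}(\tau))=s^{-1}x_d\cdot x_d\cdot\gamma_{i-1}(\tau)$ (compare the Koszul resolution written out in Example~\ref{ex:S7} and \cite[Proposition~1.2]{K1}). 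The comparison map sends the Koszul class $\tau$ to a bar cycle of bar-length $2$, and a direct check of the defining relation $\delta_1([x_d|x_d])=\delta_2(\cdots)$ displayed just above the statement shows that this bar cycle is, up to a unit, $[x_d|x_d]$; this is exactly the bookkeeping carried out in \cite[Lemma~1.5]{K1}, which I would cite and adapt. In the odd case the same comparison identifies $[x_d|x_d]$ with $\gamma_2(s^{-1}x_d)$ in $\Gamma[s^{-1}x_d]$, since $x_d$ now has odd degree and its "square" in the bar construction is the divided power $\gamma_2$ of the desuspended class.

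Once this identification is in hand, the differential statements follow immediately. The computation preceding the lemma shows $d_2([x_d|x_d])=H(\phi)_\K x_{2d-1}$ in the $E_2$-term of the EMSS, via \cite[Lemma~2.1]{K-S} applied to the chain-level identities for $\delta_1$ and $\delta_2$. Combining this with the isomorphism of the previous paragraph yields $d_2(\tau)=H(\phi)_\K x_{2d-1}$ when $d$ is even and $d_2(\gamma_2(s^{-1}x_d))=H(\phi)_\K x_{2d-1}$ when $d$ is odd. Finally, the Remark just before the statement records that $H(\phi)=0$ whenever $d$ is odd (the cocycle $\rho\phi^*(x_d)-\phi^*\xi$ lives in odd degree $2d-1$ but its class is forced to vanish because $H^*(S^d;\K)$ has no product structure to support a nontrivial Hopf invariant when $d$ is odd), so $d_2(\gamma_2(s^{-1}x_d))=0$ in that case.

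The main obstacle will be making the comparison between the bar resolution and the Koszul resolution precise at the chain level, specifically keeping track of signs and the divided-power structure so that the generator $[x_d|x_d]$ really corresponds to $\tau$ (resp.\ $\gamma_2(s^{-1}x_d)$) and not to a scalar multiple that could interfere with reading off $d_2$. This is a standard but delicate computation; I would organize it by exhibiting the explicit contracting homotopy on the bar complex of $H^*(S^d;\K)$ used to build the comparison map, restricting attention to the low bar-length part where $[x_d|x_d]$ lives, and then invoking the fact that any two resolutions are chain-homotopy equivalent over the DG algebra to conclude that the induced isomorphism on $\text{Tor}$ carries one generator to the other up to a unit in $\K$. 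Since the statement only claims agreement "up to isomorphism," tracking the precise unit is unnecessary, which keeps the argument short; the reference to \cite[Lemma~1.5]{K1} handles the even case essentially verbatim, and the odd case is an entirely parallel (in fact simpler) bookkeeping exercise.
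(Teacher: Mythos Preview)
Your proposal is correct and follows essentially the same approach as the paper: the paper's entire proof is the one-line remark ``By the same argument as in \cite[Lemma 1.5]{K1}'' preceding the lemma, together with the bar-complex computation of $d_2([x_d|x_d])$ already displayed just above it. Your write-up simply unpacks what that cited argument consists of (the explicit comparison between the bar and Koszul resolutions) and combines it with the pre-lemma differential computation and the vanishing of the Hopf invariant for odd $d$, which is exactly the intended route.
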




\medskip
\noindent
{\it Proof of Proposition \ref{prop:spheres}.}
Let $\{\widetilde{E}_r^{*,*}, \widetilde{d}_r\}$ be the EMSS 
converging to $H^*(\Omega S^d; \K)$. We see that 
$$
\widetilde{E}_2^{*,*}\cong
\left\{
\begin{array}{ll}
 \wedge(s^{-1}x_d)\otimes \Gamma[\tau]  & 
\text{if} \ d \ \text{is even},   \\
 \Gamma[s^{-1}x_d]  &  \text{if} \ d \ \text{is
 odd}, 
\end{array} 
\right. 
$$ 
where $\text{bideg} \ s^{-1}x_d = (-1, d)$ and 
$\text{bideg} \ \tau = (-2, 2d)$. 
The result \cite[Theorem III]{FHT}
 implies that  the EMSS for the fibre square
 ${\mathcal F}'$ is a right DG comodule over 
$\{\widetilde{E}_r^{*,*}, \widetilde{d}_r\}$; that is, there exists a
 comodule structure 
$\Delta : E_r^{*,*} \to E_r^{*,*}\otimes \widetilde{E}_r^{*,*} $ for any
 $r$
such that the diagram
$$
\xymatrix@C30pt@R20pt{
 E_r^{*,*}\otimes \widetilde{E}_r^{*,*} 
\ar[r]^{d_r\otimes 1 \pm 1\otimes \widetilde{d}_r} &  E_r^{*,*}\otimes
 \widetilde{E}_r^{*,*} \\
 E_r^{*,*} \ar[u]^{\Delta} \ar[r]_{d_r}&  E_r^{*,*} \ar[u]_{\Delta} 
}
$$
is commutative. 
Since the comultiplication of the bar construction induces the comodule structure, 
it follows that, in our case, 
$$
\Delta(x_{2d-1}^{\e}\gamma_i(\tau))=
 \sum_{0\leq l \leq i}x_{2d-1}^{\e}\gamma_{i-l}(\tau)\otimes \gamma_l(\tau),
$$
where $\e= 0$ or $1$. 
For dimensional reasons, we see that
 $\widetilde{d}_r=0$ for all $r$.  
 In fact if $i > j$, then we have 
$$
\text{t-deg} \ \gamma_i(\tau) +1 = 2i(d-1)+1 > (2j+1)(d-1) 
=\text{t-deg} \ s^{-1}x_d\gamma_j(\tau), 
\eqnlabel{add-0}
$$ 
where $\text{t-deg} \alpha$ denotes the total degree of an element $\alpha \in \widetilde{E}_2^{s,t}$, namely 
$\text{t-deg} \alpha = s+t$. 
This implies that $\widetilde{d}_r(\gamma_i(\tau))=0$ even if $d$ is even. 

Suppose that $H(\phi)_{\K}$ is nonzero. Then $d$ is even. The commutativity of the diagram above and 
Lemma \ref{lem:E_2} together allow 
us to deduce that $d_2(\gamma_i(\tau))=
 H(\phi)_{\K}x_{2d-1}\gamma_{i-1}(\tau)$, whence $H^*(F_\phi; \K)\cong H^*(S^{d-1}; \K)$. 
It follows then that the $C^*(S^d; \K)$-module $C^*(S^{2d-1}; \K)$ is 
in the category $\D^c(C^*(S^d; \K))$. 

We show that the converse holds. 
Assume that $C^*(S^{2d-1}; \K)$ is a compact object and $d$ is even.  
It follows from  Proposition \ref{prop:compact} that 
$\dim H^*(F_\phi; \K)<\infty$ so that there exists a non-trivial differential in the
 EMSS $\{E_r^{*,*}, d_r\}$. Let $\gamma_j(\tau) \in E_2^{*,*}$ 
be an element with the first non-trivial differential; that is, 
$d_s =0$ for $s < r$, 
$d_r(\gamma_j(\tau))\neq 0$ and $d_r(\gamma_i(\tau))=0$ for $i <j$.   
In view of the inequality (6.1),  we can write $d_r(\gamma_j(\tau))= \alpha x_{2d-1}\gamma_k(\tau)$, where
 $\alpha\neq 0$. We see that 
\begin{eqnarray*}
(d_r\otimes 1\pm 1\otimes\widetilde{d}_r) \Delta(\gamma_j(\tau))\! \!\!&=& \! \! \!
(d_r\otimes 1)(\sum_{0\leq t \leq j}\gamma_{t}(\tau)\otimes\gamma_{j-t}(\tau)) \\
&=&\sum_{0\leq t \leq j} d_r(\gamma_{t}(\tau))\otimes\gamma_{j-t}(\tau) \ = \ d_r(\gamma_j(\tau))\otimes 1. 
\end{eqnarray*}
Consider the commutative diagram mentioned above. We then have  
\begin{eqnarray*}
(d_r\otimes 1\pm 1\otimes\widetilde{d}_r) \Delta(\gamma_j(\tau)) \! \! \!&=&\! \! \!\Delta
 d_r(\gamma_j(\tau)) \\
 &=& \alpha(x_{2d-1}\otimes\gamma_k(\tau)+
\sum_{0< t \leq k} x_{2d-1}\gamma_t(\tau)\otimes \gamma_{k-t}(\tau)). 
\end{eqnarray*}
This amounts to requiring that $k = 0$. 
Thus we have  $d_r(\gamma_j(\tau))= \alpha x_{2d-1}$. 
The comparison between the total degrees allows us to deduce that 
$j(2(d-1))+1 = 2d-1$ and hence $j = 1$. For dimensional reasons, we have $r=2$. 
Lemma \ref{lem:E_2} yields that $\alpha = H(\phi)_{\K}$.  

In the case where $d$ is odd, the same argument works well to show the
 result.  
It follows from Theorem \ref{thm:J} that 
$C^*(S^{2d-1}; \K)\cong \Sigma^{-(d-1)}Z_1$ in $\D^c(C^*(S^d; \K))$; see
 also Remark \ref{rem:classification}.   

We show the latter half of the assertion. 
By considering the Auslander-Reiten quiver of $\D^c(C^*(S^d; \K))$, 
we see that there is an irreducible map from $C^*(S^d; \K)$ to 
$C^*(S^{2d-1}; \K)$. Observe that the map is non-trivial. 

Suppose that 
$\phi^* : C^*(S^d; \K) \to C^*(S^{2d-1}; \K)$ is trivial in 
$\D(C^*(S^d;\K))$. Then there exists a $C^*(S^d; \K)$-linear map 
$s :  C^*(S^d; \K) \to C^*(S^{2d-1}; \K)$ of degree $-1$ 
such that $\phi^* =sd + ds$. 
We see that $\phi^*(1) = sd(1) + ds(1) = 0$ because $d(1)=0$ and $\deg s = -1$. 
This yields that 
$\phi^*=0$ as a $C^*(S^d; \K)$-linear map. The definition of the Hopf invariant enables us to conclude that 
$H(\phi)_\K=0$; that is, $\phi^*\neq 0$ in $\D(C^*(S^d;\K))$ if $H(\phi)_\K\neq 0$. Moreover,  
$$
\text{Hom}_{\D(C^*(S^d;\K))}(C^*(S^d; \K), C^*(S^{2d-1}; \K))=
H^0( C^*(S^{2d-1}; \K))=\K.
$$
It follows that the map $\phi : S^{2d-1} \to S^d$ 
with non-trivial Hopf invariant induces an
irreducible map $\phi^*$ which coincides with the map $Z_0 \to
\Sigma^{-(d-1)}Z_1$ up to scalar multiple.   
\hfill\qed

\begin{rem}
\label{rem:EMSS-K-formal}
If the pair $(q,f)$ of maps in the fibre square ${\mathcal F}$ described 
 before Theorem \ref{thm:main} is relatively $\K$-formalizable, then
the EMSS sequence with coefficients in $\K$ for ${\mathcal F}$ collapses
at the $E_2$-term; see \cite[Proposition 3.2]{K}. 

Let $\phi : S^{2d-1} \to S^d$ be a map between spheres and 
$F_\phi$ the homotopy fibre of $\phi$. 
Then the proof of Proposition \ref{prop:spheres} yields that the EMSS
 converging to $H^*(F_\phi; \K)$ does not collapse at the $E_2$-term if
 $H(\phi)_\K$ is non-zero. Therefore we see that the pair $(\phi, *)$ with
 the constant map $* \to S^d$ is not relatively $\K$-formalizable if $H(\phi)_\K\neq 0$, 
 even though $S^d$ and $S^{2d-1}$ are $\K$-formal. Observe that the map $\phi$ 
 satisfies neither of the conditions (i) and (ii) in Proposition 
 \ref{prop:formalizable}. 
\end{rem}

\medskip
\noindent
{\it  Proof of Theorem \ref{thm:realization}.}
Recall from Theorem \ref{thm:J} 
the cohomology of the molecule $\Sigma^{-l}Z_m$ ($m \geq 0$). 
Suppose that $d+l =0$. 
It is immediate that $-m(d-1)+l <0$. Thus if $\Sigma^{-l}Z_m$ is
 realizable, then $-m(d-1)+l = 0$ so that $H^*(\Sigma^{-l}Z_m)=
H^*(\Sigma^{-m(d-1)}Z_m)\cong H^*(S^{(m+1)d-m}; \K)$ as a vector space. 


Suppose that $\Sigma^{-m(d-1)}Z_m$ is realized 
 by a finite CW complex $X$ with a map $\phi : X \to S^d$. 
We then claim that $m=0$ or $m=1$ and $d$ is even. 
The $i$th integral cohomology of $X$ is finitely generated
 for any $i$.  We see that  
$H^*(X)\otimes \K= H^*(X;
 \K)=H^*(\Sigma^{-m(d-1)}Z_m)=H^*(S^{(m+1)d-m};\K)\cong 
\K\oplus \Sigma^{-(m+1)d+m}\K$ and hence the rank of the $((m+1)d-m)$th
 integral homology of $X$ is at most one. It follows that
$H^*(X; \Q)=\Q\oplus \Sigma^{-(m+1)d+m}\Q$ or 
$H^*(X; \Q)=\Q$. 

Let  $\{\overline{E}_r, \overline{d}_r\}$ be 
the EMSS converging to $H^*(F_\phi; \Q)$. In view of the Koszul
resolution of $\K$ as an $H^*(S^d; \K)$-module, we see that  
$$
\overline{E}_2^{*,*}\cong 
\left\{
\begin{array}{l}
\wedge (s^{-1}x_d)\otimes \Q[\tau]\otimes H^*(X;\Q)  
\ \ \text{if} \ d \ \text{is even},  \\
\Q[s^{-1}x_d]\otimes H^*(X;\Q)   \ \  \text{if} \ d \ \text{is odd}, 
\end{array}
\right.
$$
where $\text{bideg} \ \tau = (-2, 2d)$ and 
$\text{bideg} \ s^{-1}x_d=(-1, d)$. Therefore, if $d$ is odd, then 
 the dimension of $H^*(F_\phi;\Q)$ is infinite because $s^{-1}x_d$ is a
 permanent cycle for dimensional reasons.  
Suppose that $d$ is even and $m > 1$.  
Since $(m+1)d-m \geq 3d-2 > 2d-1$, it follows that 
the element $\tau$ is a permanent cycle and hence 
$\dim H^*(F_\phi;\Q)=\infty$. 

The cohomologies $H^i(X; {\mathbb Z})$ and 
$H^i(\Omega S^d; {\mathbb Z})$ are finitely generated for any $i$. 
By considering the Leray-Serre spectral sequence of the fibration 
$\Omega S^d \to F_\phi \to X$, 
we see that $H^i(F_\phi; {\mathbb Z})$ is also finitely generated 
for any $i$. 
This implies that  $\dim H^*(F_\phi;\K)=\infty$. 
Thus we conclude from Proposition \ref{prop:compact} that if  
$\Sigma^{-m(d-1)}Z_m$ is realizable, then $m=1$ and $d$ is even or $m=0$. 

In order to complete the proof, it suffices to show that 
$\Sigma^{-(d-1)}Z_1$ is realizable if $d$ is even. 
In that case, for the Whitehead product 
$[\iota, \iota] : S^{2d-1} \to S^d$ 
of the identity map $\iota : S^d \to S^d$, it is well-known that 
$H([\iota, \iota])=\pm 2$; see \cite[Chapter 4]{MT}. 
Proposition \ref{prop:spheres} implies that for 
the irreducible map  $\alpha : Z_0 \to \Sigma^{-(d-1)}Z_1$,   
there exists an isomorphism 
$\Psi : \Sigma^{-(d-1)}Z_1 \to C^*(S^{2d-1}; \K)$ 
which fit into the commutative diagram
$$ 
\xymatrix@C15pt@R5pt{
         & \Sigma^{-(d-1)}Z_1 \ar[dd]^{\Psi}_{\cong} \\
Z_0=C^*(S^d; \K) \ar[ru]^{\alpha} \ar[rd]_{[\iota, \iota]^*}  & \\
         & C^*(S^{2d-1}; \K)
}
$$
in $\D(C^*(S^{d}; \K))$ up to scalar multiple. Thus we have  
$\Psi \alpha = k [\iota, \iota]^*$ for some non-zero element $k \in \K$.   
It turns out that the molecule $\Sigma^{-(d-1)}Z_1$ is realizable. 
This completes the proof. 
\hfill\qed

\begin{rem}
There exists an element of Hopf invariant one 
in $\pi_{2d-1}(S^d)$ if $d=2, 4$ or $8$. Therefore, the proof of 
Theorem \ref{thm:realization} allows us to conclude that
 the indecomposable element $\Sigma^{-(d-1)}Z_1$ is realizable with 
$S^{2d-1}$ in $\D^c(C^*(S^d; \K))$ for any field $\K$ if $d=2, 4$ or $8$.  
\end{rem}

\section{Computational examples}

Recall the functor 
$F_{S^d} : \text{D}(C^*(S^d; \K)) \to \text{D}(H^*(S^d; \K))$
described in Section 4, which gives an equivalence between 
triangulated categories. 
In order to prove Proposition \ref{prop:bundle}, 
we need a lemma concerning this functor. 

\begin{lem}
\label{lem:key2} Suppose that $d$ is even. Then, 
in $\text{\em D}^c(H^*(S^d; \K))$, 
$$
F_{S^d}(\Sigma^{-(d-1)}Z_1) \cong (\wedge(\tau)\otimes H^*(S^d; \K), d\tau
 = x_d). 
$$
\end{lem}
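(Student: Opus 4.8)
The plan is to compute $F_{S^d}(\Sigma^{-(d-1)}Z_1)$ by first replacing $\Sigma^{-(d-1)}Z_1$ with a convenient model. By Proposition \ref{prop:spheres} (applicable because $d$ is even, so there exist maps $\phi : S^{2d-1} \to S^d$ with $H(\phi)_\K \neq 0$, e.g.\ the Whitehead product $[\iota,\iota]$ with Hopf invariant $\pm 2$), we know that $\Sigma^{-(d-1)}Z_1 \cong C^*(S^{2d-1}; \K)^\phi$ in $\D^c(C^*(S^d; \K))$. Since the functor $F_{S^d}$ leaves cohomology unchanged and sends $C^*(S^d)$ to $H^*(S^d)$, it suffices to identify $F_{S^d}(C^*(S^{2d-1};\K)^\phi)$ inside $\D^c(H^*(S^d;\K))$. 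I would do this by taking the $TV$-model route: pull $C^*(S^{2d-1})^\phi$ back along $m_{S^d} : TV_{S^d} \to C^*(S^d)$ and then apply $-\otimes^{\text L}_{TV_{S^d}} H^*(S^d)$.

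The key computational step is to produce an explicit semifree resolution over $H^*(S^d;\K) = (\wedge(x_d) \text{ with } x_d^2=0,\ d\text{ even})$ of the object we land on. Here I expect to use precisely the Koszul-type resolution already appearing in the paper (see Example \ref{ex:S7} and the $E_2$-computation in Section 6): over $H^*(S^d;\K)$ with $d$ even, $\K$ has a resolution $(\wedge(s^{-1}x_d)\otimes\Gamma[\tau]\otimes H^*(S^d;\K),\delta)$ with $\delta(s^{-1}x_d)=x_d$ and $\delta(\gamma_i(\tau)) = s^{-1}x_d\, x_d\, \gamma_{i-1}(\tau)$. Since $H^*(S^{2d-1})$ is free of rank two over $\K$ (in degrees $0$ and $2d-1$), the derived tensor $H^*(S^{2d-1};\K)\otimes^{\text L}_{H^*(S^d;\K)}\!\K$ would, before any further differential, be the full divided-power algebra — matching the computation that $\dim H^*(F_\phi;\K) = \infty$ in the relatively formalizable case — but the point is that for the \emph{actual} map $\phi$ with $H(\phi)_\K\neq 0$, the EMSS differential $d_2(\tau) = H(\phi)_\K\, x_{2d-1}$ (Lemma \ref{lem:E_2}) kills all but the bottom divided power. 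The upshot, which I would verify directly at the cochain level using the cocycle $\rho\phi^*(x_d)-\phi^*\xi$, is that $m_{S^d}^*C^*(S^{2d-1})^\phi$ is quasi-isomorphic as a $TV_{S^d}$-module (equivalently, after $F_{S^d}$, as an $H^*(S^d;\K)$-module) to the two-generator Koszul complex $(\wedge(\tau)\otimes H^*(S^d;\K),\, d\tau = x_d)$ with $\deg\tau = d-1$; one checks this complex has cohomology $\K$ in degrees $0$ and $2d-1$, which pins it down as $\Sigma^{-(d-1)}Z_1$ by Remark \ref{rem:classification}.

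Concretely, the cleanest route is: (1) build the length-one semifree filtration $H^*(S^d;\K) \hookrightarrow (\wedge(\tau)\otimes H^*(S^d;\K), d\tau=x_d) =: M$, noting $M$ is a finite semifree $H^*(S^d;\K)$-module with $H^0(M)=\K$, $H^{2d-1}(M)=\K$ and all else zero; (2) invoke Remark \ref{rem:classification} / Theorem \ref{thm:J} to conclude $M \cong \Sigma^{-(d-1)}Z_1$ in $\D^c(H^*(S^d;\K))$ up to the identification $F_{S^d}(\Sigma^{-(d-1)}Z_1)$ has the same cohomology; (3) since $F_{S^d}$ is an equivalence preserving cohomology and molecules are determined by cohomology, deduce $F_{S^d}(\Sigma^{-(d-1)}Z_1)\cong M$. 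The main obstacle I anticipate is step (3) — making rigorous that an equivalence of triangulated categories which preserves the cohomology functor must carry the \emph{specific} indecomposable $\Sigma^{-(d-1)}Z_1$ to the molecule of matching cohomology, rather than merely to something with the right cohomology; this needs the classification of molecules by cohomology (Remark \ref{rem:classification}) together with the fact that $F_{S^d}$, being an equivalence, sends indecomposables to indecomposables. A mild secondary nuisance is bookkeeping the degree of $\tau$ and the shift conventions so that "$\Sigma^{-(d-1)}$" comes out with the sign and grading the paper uses; but that is routine.
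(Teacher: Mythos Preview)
Your ``cleanest route'' in the final paragraph is correct and is exactly the paper's proof, which is two sentences: $F_{S^d}$ leaves cohomology unchanged, and Remark~\ref{rem:classification} then gives the result. Your anticipated obstacle in step~(3) dissolves just as you say --- $F_{S^d}$ is an equivalence and hence sends indecomposables to indecomposables, while $M=(\wedge(\tau)\otimes H^*(S^d;\K),\,d\tau=x_d)$ has $\dim H(M)=2$ and is therefore itself indecomposable (every molecule in Theorem~\ref{thm:J} has two-dimensional cohomology), so the classification by cohomology pins both objects down.

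The long detour through Proposition~\ref{prop:spheres}, Hopf maps, the EMSS differential, and the explicit cocycle $\rho\phi^*(x_d)-\phi^*\xi$ is unnecessary, and as written it has a genuine gap: you assume the existence of $\phi:S^{2d-1}\to S^d$ with $H(\phi)_\K\neq 0$, citing the Whitehead product with $H([\iota,\iota])=\pm 2$. In characteristic~$2$ this vanishes, and for even $d\notin\{2,4,8\}$ Adams' Hopf-invariant-one theorem forbids any $\phi$ with $H(\phi)_\K\neq 0$ over $\mathbb{F}_2$. Since the lemma places no restriction on $\mathrm{char}\,\K$, that route does not cover all cases. Drop the first three paragraphs and keep only the last.
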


\begin{proof}
The functor $F_{S^d}$ leaves the cohomology of an object
 unchanged. Remark \ref{rem:classification} implies the result.
\end{proof}

\medskip
\noindent
{\it Proof of Proposition \ref{prop:bundle}.} 
By assumption, the cohomology $H^*(BG; \K)$ is a polynomial algebra
generated by elements with even degree, say  
$$
H^*(BG;\K)\cong \K[x_1, x_2, ..., x_l], 
$$
where $\deg x_1 \leq \deg x_2 \leq \cdots \leq \deg x_l$ and 
each $\deg x_i$ is even. Since $G$ is simply-connected, it follows that 
$\deg x_1\geq 4$.  Moreover,  $\widetilde{H}^i(S^4;\K)$ is
nonzero if and only if $i = 4$, and  
$\dim \widetilde{H}^{4-1}(\Omega BG; \K)-\dim (QH^*(BG; \K))^4=0$. 
Therefore Proposition \ref{prop:formalizable} allows us to deduce 
that the pair $(f, \pi)$ of maps is relatively
$\K$-formalizable, where $\pi : EG \to BG$ denotes the projection of the
universal $G$-bundle. 
Theorem \ref{thm:main} implies that  
$$\text{level}_{S^4}(E_f)
=\text{level}_{\D(H^*(S^4; \K))}
(\K\otimes^\text{L}_{H^*(BG; \K)}H^*(S^4; \K))=:L.
$$ 

Consider the case where $H^4(f; \K)\neq 0$. 
Without loss of generality, we assume that $H^4(f; \K)(x_1)=z_4$ and 
   $H^*(f; \K)(x_j)=0$ for $j \neq 1$.  
Here $z_4$ is the generator of the
algebra $H^*(S^4; \K)$ of degree $4$. 
We then have 
$$M:=\K\otimes^\text{L}_{H^*(BG; \K)}H^*(S^4; \K)\cong \wedge 
((s^{-1}x_2, ...., s^{-1}x_l), 0)\otimes (\wedge (s^{-1}x_1)\otimes
H^*(S^4; \K), \delta), 
$$
in $\D^c(H^*(S^4;\K))$, where $\delta s^{-1}x_1= z_4$. 
It follows from Lemma \ref{lem:key2} that 
$M \cong \wedge ((s^{-1}x_2, ...., s^{-1}x_l), 0)\otimes
F_{S^4}(\Sigma^{-(4-1)}Z_1)$.
This fact yields  
that $M$ is isomorphic to a coproduct of the molecule 
$F_{S^4}(\Sigma^{-(4-1)}Z_1)$ and certain shifts as an 
$H^*(S^4; \K)$-module. 

The functor $F_{S^4}$ is exact and 
gives an equivalence between the triangulated
categories $\text{D}(C^*(S^4; \K))$ and $\text{D}(H^*(S^4; \K))$. 
By \cite[Theorem 2.4(6)]{ABIM} and Proposition \ref{prop:Z}, we see that 
$L=\text{level}_{\text{D}(C^*(S^4; \K))}\Sigma^{-(4-1)}Z_1=2$.

Suppose that $\widetilde{H}^*(f;\K)= 0$. It follows that 
$\K\otimes^\text{L}_{H^*(BG; \K)}H^*(S^4; \K)$ is isomorphic to the
DG module $\wedge (s^{-1}x_1, s^{-1}x_2, ...., s^{-1}x_l)
\otimes H^*(S^4; \K)$ with the trivial differential, which is a
coproduct of $H^*(S^4; \K)$ and certain shifts. We conclude that $L=1$. 
\hfill\qed

\medskip
\noindent
{\it Proof of Proposition \ref{prop:variation}.} 
We observe that $(g, \pi)$ is a relatively 
$\K$-formalizable pair. Indeed the maps $g$ and $\pi$ satisfy the
conditions (ii)
and (i), respectively. Thus 
Theorem \ref{thm:main} yields that the $C^*(S^4;\K)$-level of $C^*(E_g; \K)$
is equal to the $H^*(S^4; \K)$-level of 
$H^*(BH; \K)\otimes^{\mathbb L}_{H^*(BG;\K)}H^*(S^4; \K)$.  
Since $H$ is a maximal rank subgroup of $G$, 
it follows from \cite[6.3 Theorem]{Baum} that 
$H^*(BH;\K)$ is a free $H^*(BG;\K)$-module. 
Therefore 
$H^*(BH; \K)\otimes^\text{L}_{H^*(BG;\K)}H^*(S^4; \K)$ is
isomorphic to a coproduct of shifts of $H^*(S^4; \K)$. This completes
the proof. 
\hfill\qed

\begin{ex}
\label{ex:non-formal}
Let $E_\nu \to S^7$ be the fibration described 
in Remark \ref{rem:EMSS-K-formal}, namely 
the pullback of the Hopf map $\nu : S^7 \to S^4$ by itself. 
We here compute the level of $E_\nu$.  

Consider the commutative diagram
$$
\xymatrix@C15pt@R8pt{
S^3 \ar[rd]\ar[dd] \ar[rr] &        & S^7  \ar@{->}'[d][dd] \ar@{=}[rd] & \\
             &  E_\nu \ar[dd]_(0.3)\pi \ar[rr] &                 & S^7
 \ar[dd]^\nu \\ 
 pt \ar[rd]\ar@{->}'[r][rr] & & S^4 \ar@{=}[rd] &  \\  
            & S^7 \ar[rr]_\nu & & S^4. 
}
$$
Let $\{E_r, d_r\}$ and $\{\overline{E}_r, \overline{d}_r \}$ be the
 Eilenberg-Moore spectral sequences  
for the front square and the back square, respectively. 
Then the diagram above gives rise to a morphism 
$\{g_r\} : \{E_r, d_r\} \to \{\overline{E}_r, \overline{d}_r \}$ 
of the spectral sequences. 
Observe that 
$\overline{E}_2\cong H^*(S^7; \K)\otimes \Gamma[w]\otimes
 \wedge(s^{-1}x_4)$
and $E_2\cong H^*(S^7; \K)\otimes \Gamma[w]\otimes
 \wedge(s^{-1}x_4)\otimes H^*(S^7; \K)$, 
where $\text{bideg} \ w=(-2, 8)$ and 
$\text{bideg} \ s^{-1}x_4=(-1, 4)$. Moreover it follows that 
$g_2(w)=w$, $g_2(s^{-1}x_4)=s^{-1}x_4$, $g_2(x)=x$ for 
$x \in H^*(S^7; \K)\otimes 1 \otimes 1\otimes 1$ and 
$g_2(y)=0$ for $y \in 1\otimes 1\otimes 1\otimes H^*(S^7; \K)$. 

By the same argument as in the proof of Proposition \ref{prop:spheres}, 
we see that $\overline{d}_2(\gamma_i(w))=x_7\gamma_{i-1}(w)$. 
This implies that $d_2(\gamma_i(w))=x_7\gamma_{i-1}(w)$ and hence 
$E_\infty\cong E_3^{*,*}\cong \wedge(s^{-1}x_4)\otimes H^*(S^7; \K)$ as an 
$H^*(S^7; \K)$-module. For dimensional reasons, there is no 
extension problem. Thus it follows that  
$H^*(E_\nu)\cong \wedge(s^{-1}x_4)\otimes H^*(S^7; \K)$ as an 
$H^*(S^7; \K)$-module. 
We observe that, by Remark \ref{rem:EMSS-K-formal},  
the pair $(\nu, \nu)$ of maps is {\it not} relatively
 $\K$-formalizable.

Define a $C^*(S^7; \K)$-module map 
$\varphi : \Sigma^{-3}C^*(S^7; \K) \to C^*(E_\nu; \K)$ by 
$\varphi(\Sigma^{-3}z)=s^{-1}x_4' \pi^*(z)$, where $s^{-1}x_4'$ is a
 representative element of $s^{-1}x_4 \in H^*(E_\nu; \K)$. 
We see that the map 
$
\varphi\oplus \pi^* : \Sigma^{-3}C^*(S^7; \K)\oplus C^*(S^7; \K) \to 
C^*(E_\nu ; \K)
$ 
is a quasi-isomorphism. The fact allows us to conclude that 
$
\text{level}_{S^7}(E_\nu)=1.  
$
\end{ex}

\begin{ex}
\label{ex:ex} 
We denote by $\Sigma^i{\mathbb Z}A_\infty$ the connected component of the
 Auslander-Reiten quiver containing $\Sigma^iZ_0$ in
 $\D^c(C^*(S^d;\K))$, where $0\leq i \leq d-2$. 

Let $G_2$ be the compact simply-connected simple Lie group of type $G_2$. 
 Consider the principal $G_2$-bundle $G_2 \to X_1 \to S^4$ with the classifying
 map $f : S^4 \to BG_2$ which represents a generator of 
$\pi_4(BG_2)\cong \pi_3(G_2)\cong {\mathbb Z}$. 
It is well-known that 
$H^*(BG_2; {\mathbb F}_2)\cong {\mathbb F}_2[y_4, y_6, y_7]$, where
 $\deg y_i = i$. Therefore, 
it follows from a computation similar to that in the proof of Proposition 
\ref{prop:bundle} that, in $\D^c(C^*(S^4; {\mathbb F}_2))$,  
$$
C^*(X_1; {\mathbb F}_2)\cong \Sigma^{-3}Z_1\otimes  {\mathbb
 F}_2\{s^{-1}y_6, s^{-1}y_7\}\cong \Sigma^{-3}Z_1 \oplus \Sigma^{-3-5}Z_1
 \oplus \Sigma^{-3-6}Z_1.  
$$
This yields that $C^*(X_1; {\mathbb F}_2)$ consists of two molecules 
$\Sigma^{-3}Z_1$ and $\Sigma^{-3-6}Z_1$ in ${\mathbb Z}A_\infty$
 and one molecule $\Sigma^{-3-5}Z_1$ in 
$\Sigma^{2}{\mathbb Z}A_\infty$. We see that 
$\text{level}_{S^4}(X_1)=2$. 

Consider the principal $SU(4)$-bundle $SU(4)\to X_2 \to S^4$ with the
 classifying map representing the generator of 
$\pi_4(BSU(4))\cong {\mathbb Z}$. We observe that 
$H^*(BSU(4); {\mathbb F}_2)\cong {\mathbb F}_2[c_2, c_3, c_4]$, 
where $\deg c_i=2i$.  A computation similar to that above 
 enables  us to conclude that 
$$
C^*(X_2; {\mathbb F}_2)\cong \Sigma^{-3}Z_1 \oplus \Sigma^{-3-5}Z_1
 \oplus \Sigma^{-3-7}Z_1.  
$$
Observe that the molecules $\Sigma^{-3}Z_1$, $\Sigma^{-3-5}Z_1$ and 
$\Sigma^{-3-7}Z_1$ are in the quivers ${\mathbb Z}A_\infty$, 
$\Sigma^{2}{\mathbb Z}A_\infty$ and  $\Sigma^{1}{\mathbb Z}A_\infty$, 
respectively. This yields that   
$\text{level}_{S^4}(X_2)=2$.
\end{ex}

\medskip
\noindent
{\it Acknowledgments.} I thank Ryo Takahashi 
for helpful discussions on Auslander-Reiten theory and the levels of
modules. I also thank Dai Tamaki for a comment on the realization
problem of molecules without which Proposition \ref{prop:spheres} could
not have been obtained. I am grateful to Peter J{\o}rgensen 
and Jean-Claude Thomas for valuable suggestions and comments 
to revise the first version of this paper. 
I am also deeply grateful to the referee for careful reading of  a previous version of this paper 
and for many helpful comments to revise the version.  

\section{Appendix}

We recall briefly the $TV$-model introduced by Halperin and 
Lemaire \cite{H-L}.  

Let $TV$ be the tensor algebra 
$\sum_{n \geq 0}V^{\otimes n}$ on a graded vector space $V$ over a field 
${\mathbb K}$ and  let $T^{\geq k}V$ denote its ideal 
$\sum_{n \geq k}V^{\otimes n}$ of the algebra $TV$, where $V^{\otimes
0}=\K$.  
As usual, we define the degree  of the element $w = v_1v_2 \cdots v_l \in TV$ 
by $\deg w = n_1 + \cdots +n_l$ if $v_{n_i} \in V^{n_i}$.

Let $V'$ and $V''$ be copies of $V$. We write $sv$ for the element of 
$\Sigma V$ corresponding to $v \in V$.     
The cylinder object $TV\wedge I=(T(V'\oplus V''\oplus \Sigma V), d)$ 
introduced by Baues and Lemaire \cite[\S1]{B-L} is a DG algebra with 
differential $d$ defined by 
$$
dv'=(dv)', \  dv'' = (dv)'' \ \text{and} \ dsv = v'' - v' -S(dv), 
$$
where 
$S: TV \to T(V'\oplus V''\oplus \Sigma V)$ is a map with 
$Sv = sv$ for $v \in V$ and $S(xy)= Sx\cdot y'' + (-1)^{\deg x}x'\cdot Sy$ for 
$x, y \in TV$.  
The inclusions $\e_0 : TV \to TV\wedge I$ and 
$\e_1 : TV \to TV\wedge I$ are defined by $\e_0(v)=v'$ and
$\e_1(v)=v''$, respectively. 

For DG algebra maps $\phi', \phi'' : TV \to A$,  
we say that $\phi'$ and $\phi''$ are homotopic if the DG algebra map 
$(\phi', \phi'') : T(V'\oplus V'') \to A$ extends to a DG algebra map 
$\Phi : TV\wedge I \to A$; that is $\phi'=\Phi\e_0$ and
$\phi''=\Phi\e_1$. We refer the reader to \cite[Section 3]{FHT2} 
for the homotopy theory of DG algebras.

A {\it $TV$-model} for a differential graded algebra $(A, d_A)$ 
is a quasi-isomorphism  $(TV ,d) \to (A, d_A)$. 
Moreover the model is  called  
minimal if $d(V) \subset T^{\geq 2}V$. 
For any simply-connected space whose cohomology with coefficients 
in $\K$ is locally finite, there exists a minimal 
$TV$-model $(TV ,d) \to C^*(X; {\mathbb K})$ which is unique up to homotopy.  
Such a model $(TV, d)$ is called a  {\it  minimal model for} $X$. 
It is known that the vector space $V^n$ is isomorphic to 
$(\Sigma^{-1}\tilde{H}^*(\Omega X;\K))^n=\tilde{H}^{n-1}(\Omega X; {\mathbb K})$ and 
the quadratic part of the differential $d$ is the coproduct on 
$\tilde{H}^*(\Omega X; \K)$ up to the isomorphism 
$V \cong \Sigma^{-1}\tilde{H}^*(\Omega X;\K)$. 
The reader is referred to \cite{H-L} and \cite[Introduction]{B-T} 
for these facts and more details of $TV$-models.


\end{document}